\DeclareMathOperator{\dv}{div}
\DeclareMathOperator{\loc}{loc}
\newcommand{\RR}{\mathbb{R}}
\newcommand{\mA}{\mathcal{A}}
\newcommand{\Om}{\Omega}
\newcommand{\na}{\nabla}
\newcommand{\La}{\Lambda}
\newcommand{\al}{\alpha}
\newcommand{\ga}{\gamma}
\newcommand{\de}{\delta}
\newcommand{\la}{\lambda}
\newcommand{\iprod}[2]{\langle #1,  #2\rangle}
\DeclareRobustCommand{\rchi}{{\mathpalette\irchi\relax}}
\newcommand{\irchi}[2]{\raisebox{\depth}{$#1\chi$}} % inner command, used by \rchi
\newcommand{\norm}[1]{\left\lVert#1\right\rVert}
\newtheorem{theorem}{Theorem}[section]
\newtheorem{lemma}[theorem]{Lemma}
\newtheorem{definition}[theorem]{Definition}
\newtheorem{remark}[theorem]{Remark}
\def\Xint#1{\mathchoice
	{\XXint\displaystyle\textstyle{#1}}%
	{\XXint\textstyle\scriptstyle{#1}}%
	{\XXint\scriptstyle\scriptscriptstyle{#1}}%
	{\XXint\scriptstyle\scriptscriptstyle{#1}}%
	\!\int}
\def\XXint#1#2#3{{\setbox0=\hbox{$#1{#2#3}{\int}$}
		\vcenter{\hbox{$#2#3$}}\kern-.5\wd0}}
\def\YYint#1#2#3{{\setbox0=\hbox{$#1{#2#3}{\iint}$}
		\vcenter{\hbox{$#2#3$}}\kern-.51\wd0}}
\def\Xint#1{\mathchoice
	{\XXint\displaystyle\textstyle{#1}}%
	{\XXint\textstyle\scriptstyle{#1}}%
	{\XXint\scriptstyle\scriptscriptstyle{#1}}%
	{\XXint\scriptscriptstyle\scriptscriptstyle{#1}}%
	\!\int}
\def\XXint#1#2#3{{\setbox0=\hbox{$#1{#2#3}{\int}$ }
		\vcenter{\hbox{$#2#3$ }}\kern-.6\wd0}}
\def\dashint{\Xint-}
\newcommand{\fint}{\dashint}
\DeclareMathOperator{\dist}{dist}
\let\orgdescriptionlabel\descriptionlabel
\renewcommand*{\descriptionlabel}[1]{%
	\let\orglabel\label
	\let\label\@gobble
	\phantomsection
	\edef\@currentlabel{#1}%
	\let\label\orglabel
	\orgdescriptionlabel{#1}%
}
\numberwithin{equation}{section}
\begin{document}

\title{Self-improving properties of very weak solutions to double phase systems}

\everymath{\displaystyle}

\author{Sumiya Baasandorj}
\address[Sumiya Baasandorj]{Department of Mathematical Sciences, Seoul National University, Seoul 08826, Korea.}
\email[Corresponding author]{summa2017@snu.ac.kr}
\thanks{S. Baasandorj and W. Kim were supported by the National Research Foundation of Korea NRF-2021R1A4A1027378.}

\author{Sun-Sig Byun}
\address[Sun-Sig Byun]{Department of Mathematical Sciences, Seoul National University, Seoul 08826, Korea}
\address{Research Institute of Mathematics, Seoul National University, Seoul 08826, Korea.}
\email{byun@snu.ac.kr}
\thanks{S. Byun was supported by the National Research Foundation of Korea NRF-2022R1A2C1009312.}

\author{Wontae Kim}
\address[Wontae Kim]{Department of Mathematics, Aalto University, P.O. BOX 11100, 00076 Aalto, Finland}
\email{wontae.kim@aalto.fi}
%\thanks{W. Kim was supported by the National Research Foundation of Korea NRF-2017R1A2B2003877.}

\begin{abstract}
	We prove the self-improving property of very weak solutions to non-uniformly elliptic problems of double phase type in divergence form under sharp assumptions on the nonlinearity.
\end{abstract}

\keywords{Very weak solution, Double phase problems, Non-standard growth, Lipschitz truncation and Whitney covering, Gehring lemma}
\subjclass[2020]{35D30, 35J60, 35J70}
\maketitle

\section{Introduction}

In this paper, we consider self-improving properties of very weak solutions to the non-uniformly elliptic problems of double phase type in the divergence of the following form
\begin{align}
	\label{me}
	-\dv\mA(x,\na u)
	= 
	-\dv\left(|F|^{p-2}F + a(x)|F|^{q-2}F \right)
	\quad\text{in}\quad \Omega
\end{align}  
for a bounded domain $\Omega\subset \RR^{n}$ with $n\geqslant 2$, where the map $\mA(x,\xi):\Omega\times\RR^{Nn}\longrightarrow \RR^{Nn}$ $(N\geqslant 1)$ is a Carath\'eodory vector field satisfying the following structure assumptions with fixed constants $0<\nu\le L<\infty$: 

\begin{equation}
	\label{str}
	\begin{cases}
		\nu(|\xi|^p+a(x)|\xi|^q)\leqslant \iprod{\mA(x,\xi)}{\xi}, \\
		|\mA(x,\xi)|\leqslant L(|\xi|^{p-1}+a(x)|\xi|^{q-1})
	\end{cases}
\end{equation}
for a.e. $x\in\Omega$ and every $\xi\in\RR^{Nn}$, whereas $F : \Omega \rightarrow \RR^{Nn}$  is a given vector field. Throughout the paper, we shall assume that exponents $1<p<q<\infty$ and the coefficient function $a : \Omega\rightarrow \RR$ satisfy the following main assumptions

\begin{align}
	\label{exp}
	\frac{q}{p} < 1+\frac{\alpha}{n},\quad
	0\leqslant a(\cdot)\in C^{0,\alpha}(\Omega)
	\quad\text{ for some }\quad
	\alpha\in (0,1].
\end{align}

The primary model system \eqref{me} originates from the functional given by 

\begin{align*}
	W^{1,1}(\Omega,\RR^{N})\ni v\mapsto  \mathcal{P}(v,\Omega) - \int_{\Omega}\iprod{|F|^{p-2}F + a(x)|F|^{q-2}F}{Dv}\,dx,
\end{align*}
where the double phase functional is of the form

\begin{align}
	\label{dbfunc}
	W^{1,1}(\Omega,\RR^{N})\ni v\mapsto \mathcal{P}(v,\Omega):= \int_{\Omega}H(x,|\nabla v|) \,dx.
\end{align}
Here and in the rest of the paper we denote 
\begin{align}
	\label{H-func}
	H(x,z):= |z|^{p} + a(x)|z|^{q}.
\end{align}
The function $H(x,z)$, with some ambiguity of notation, will be considered in all cases $z\in\RR$, $z\in\RR^{N}$ and $z\in\RR^{Nn}$. The double phase functional was introduced first by Zhikov \cite{Zh1,ZKO} in order to produce models of strongly anisotropic materials in the settings of homogenization and nonlinear elasticity. The main feature of the functional $\mathcal{P}$ in \eqref{dbfunc} is that its growth and ellipticity ratio depend on the modulating coefficient function $a(\cdot)$, which exhibits the mixture of two different materials. The functional in \eqref{dbfunc} itself is a significant example of functionals belonging to a family of functionals having non-standard $(p,q)$-growth conditions, as it was introduced first by Marcellini in \cite{Ma2,Ma3}. Recently, regularity properties of weak solutions to the double phase problems have been extensively investigated in a series of papers \cite{BCM1,BCM3,CM1,CM2,CM3,DM1}. Among them, the gradient H\"older regularity of a minimizer of the functional $\mathcal{P}$ has been proved under the assumption \eqref{exp} in \cite{BCM1,BCM3}. Essentially, the condition \eqref{exp} is sharp in the sense of the regularity theory considered there, see for instance \cite{ELM1,ELM2,ELM3}. Our purpose in this paper is to consider very weak solutions to the system of double phase type of equations \eqref{me}, whose definition is given as follows.

\begin{definition}
	\label{def:vw}
	For a given vector field $F : \Omega\rightarrow \RR^{Nn}$ such that 
	\begin{equation}
		\label{def:1}
		\int_{\Om}\left(|F|^{p-1}+a(x)|F|^{q-1}\right)\ dx<\infty,
	\end{equation}
	a map $u\in W^{1,1}(\Omega,\RR^N)$ with
	\begin{equation}
		\label{def:2}
		\int_{\Om}\left(|\na u|^{p-1}+a(x)|\na u|^{q-1}\right)\ dx<\infty
	\end{equation}
	is called a very weak solution to the system \eqref{me} under the assumptions \eqref{str}
	and \eqref{exp} if
	\begin{equation}
		\label{def:3}
		\int_{\Om}\iprod{\mA(x,\na u)}{\na \varphi}\ dx=\int_{\Omega}\iprod{|F|^{p-2}F+a(z)|F|^{q-2}F}{\na \varphi}\ dx
	\end{equation}
	holds for every $\varphi\in C_0^\infty(\Omega,\RR^N)$.
\end{definition}

Note that if we replace the assumptions \eqref{def:1} and \eqref{def:2}  with 
\begin{align}
	\label{vw:1}
	\int_{\Omega} H(x,|F|) \,dx < \infty
\end{align}
and
\begin{align}
	\label{vw:2}
	\int_{\Om}H(x,|\nabla u|)\ dx<\infty
\end{align}
in the above definition, respectively, where the function $H$ is defined in \eqref{H-func}, then this very weak solution $u$ of \eqref{me} is called a classical weak solution naturally. Moreover, it has been shown that, for a weak solution $u$ to \eqref{me}, the equality \eqref{def:3} under the assumption \eqref{exp} still holds for any function $\varphi \in W^{1,1}_{0}(\Omega,\mathbb{R}^N)$ with $H(x,|\nabla \varphi|)\in L^{1}(\Omega)$, see \cite{CM3}. In fact, the required integrability of $\na u$ in \eqref{vw:2} is needed in order to apply energy estimates for deriving existence, uniqueness and further regularity results. But by weakening the integrability of $\na u$ as in Definition \ref{def:vw}, we are still able to consider so-called a very weak solution $u$ to the elliptic problem \eqref{me}. At this stage, we are not allowed to use energy estimate methods due to the lack of integrability of $\na u$. In this paper, we are interested in validity of the self-improving property of very weak solutions to the system \eqref{me} modeled on the operator of double phase structure, that is, if $\left[H(x,|\nabla u|)\right]^{\delta}\in L^{1}(\Omega)$ holds for some constant $\delta\in (0,1)$ being enough to close to 1 and the non-homogeneous term $F$ has a certain integrability as in \eqref{vw:1}, then this very weak solution $u$ becomes a weak solution.

In the case of $p$-Laplace ($a(\cdot)\equiv 0$ in \eqref{me}), the self-improving property of very weak solutions have been achieved in \cite{IS} for the homogeneous case ($F\equiv 0$). In \cite{L}, this result has been extended to the $p$-Laplace type system under more general structure assumptions involving non-homogeneous data $F\in L^{p}$ based on techniques of Lipschitz truncation, which provides a way for constructing admissible test functions by applying the Whitney covering lemma. Moreover, the techniques of Lipschitz truncation have been extended and applied to Calder\'on-Zygmund type estimates \cite{AP2,AP1}, the existence of a very weak solution \cite{BDS,BS,MS} and the setting of Orlicz space \cite{BL}, the variable exponent $p(\cdot)$ space \cite{BZ}, as well as parabolic $p$-Laplace type systems \cite{AB,B,BBS,KL}. See also  \cite{AF1,AF2} for the application of Lipschitz truncation method on elliptic problems.

Going back to the double phase system with the non-homogeneous data, we aim at proving the self-improving property for the double phase problems by revisiting Lipschitz truncation techniques in \cite{L,KL}. The main theorem in this paper reads as follows:

\begin{theorem}\label{main}
	Let $F : \Omega\rightarrow \RR^{Nn}$ be a vector field satisfying
	\begin{align*}
		\int_{\Om}H(x,|F|)\ dx<\infty
	\end{align*}
	under the assumptions \eqref{str} and \eqref{exp}. 
	There exists an exponent $\delta_{0}\in (1-1/q,1)$ depending only on $n,N,p,q,\alpha,\nu, L$ and $[a]_{0,\alpha}$ such that for every $\delta\in (\delta_0,1)$ and every very weak solution $u$ of \eqref{me} with 
	\begin{align}
        \label{main:1}
		\int_{\Omega}\left[ H(x,|\nabla u|) \right]^{\delta}\,dx < \infty,
	\end{align}
	there exists a positive radius $R_0$ depending only on $n,N,p,q,\alpha,\nu,L,[a]_{0,\alpha},\delta$ and $ \norm{\na u}_{L^{p\delta}(\Omega)}$ such that
	\begin{align*}
		\begin{split}
			\fint_{B_{r}(x_0)}H(x,|\nabla u|)\ dx
			&\leqslant c\left(\fint_{B_{2r}(x_0)}\left[H(x,|\nabla u|)\right]^\de \ dx\right)^\frac{1}{\de}
			+c \fint_{B_{2r}(x_0)}H(x,|F|)\ dx + c\\
		\end{split}
	\end{align*}	
	holds for some constant $c\equiv c(n,N,p,q,\alpha,\nu, L, [a]_{0,\alpha})$, whenever $B_{2r}(x_0)\subset\Om$ is a ball with $2r\leqslant R_0$.
\end{theorem}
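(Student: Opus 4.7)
The plan is to follow the Lipschitz truncation scheme of Lewis and Kinnunen--Lewis as revisited in \cite{L,KL}, adapted to the double phase geometry dictated by $H(x,z)$ in \eqref{H-func}. Very roughly, one wants to test the very weak identity \eqref{def:3} against a function of the form $\varphi = \eta(u-u_\lambda)$, where $u_\lambda$ is a truncation of $u$ whose modified gradient is controlled at the level $\lambda$ with respect to $H(x,\cdot)$, and then let $\lambda\to\infty$ in a summed form that yields a reverse H\"older inequality to which Gehring's lemma applies.

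First I would fix a ball $B_{2r}(x_0)\Subset\Om$ and, following the phase--alternative of \cite{BCM1,BCM3,CM1}, distinguish the $p$-\emph{phase} sub-balls where $\inf_B a\le M[a]_{0,\al}r^\al$ (so that $H(x,z)\simeq |z|^p$ uniformly on $B$) from the $(p,q)$-\emph{phase} sub-balls where $\inf_B a>M[a]_{0,\al}r^\al$ (so that $a(x)\simeq \inf_B a$ and $H(x,z)\simeq |z|^p+(\inf_B a)|z|^q$). The condition $q/p<1+\al/n$ together with \eqref{main:1} will be used, via the usual log-H\"older style excess estimate, to pass between the two scales of integrability in the $(p,q)$-phase; crucially it gives the self-improvement room needed when transferring $[H]^\de$-information into $H$-information on $(p,q)$-phase cubes, as in \cite{CM3}. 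This dichotomy is what replaces the single-scale truncation parameter of the $p$-Laplace case.

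Next, for each dyadic level $\lambda_k\simeq 2^k$ I would define a bad set
\[
	E_{\lambda_k}:=\bigl\{x\in B_{r}(x_0)\ :\ \mathcal M\!\left([H(\cdot,|\na u|)]^\de\rchi_{B_{2r}}\right)(x)>\lambda_k^\de\bigr\},
\]
take a Whitney covering $\{Q_j\}$ of $E_{\lambda_k}$ with an associated partition of unity, and build the McShane--Acerbi--Fusco style truncation
\[
	u_{\lambda_k}(x):=\begin{cases} u(x), & x\notin E_{\lambda_k},\\ \sum_j \varphi_j(x)\,(u)_{Q_j}, & x\in E_{\lambda_k},\end{cases}
\]
where the phase of each Whitney cube is decided as above. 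The key technical output of this construction is a bound of the form $H(x,|\na u_{\lambda_k}|)\lesssim \lambda_k$ a.e., together with the integral decay estimate
\[
	\int_{E_{\lambda_k}\setminus E_{\lambda_{k+1}}}H(x,|\na u_{\lambda_k}|)\,dx\lesssim \lambda_k\,|E_{\lambda_k}\setminus E_{\lambda_{k+1}}|.
\]
Here the difficult ingredient is a Poincar\'e type inequality on Whitney cubes for $H$, which in the $(p,q)$-phase requires the sharp range in \eqref{exp} and a careful use of H\"older's inequality and the $C^{0,\al}$ bound on $a$.

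Then I would take $\varphi=\eta(u-u_{\lambda_k})$ as a test function in \eqref{def:3} (after a standard approximation justifying its admissibility for the very weak formulation, since $u_{\lambda_k}$ is bounded and Lipschitz at level $\lambda_k$ relative to $H$), and split every resulting integral into a good part ($\{u=u_{\lambda_k}\}$) and a bad part ($E_{\lambda_k}$). Using \eqref{str} and Young's inequality tuned to each phase, the left-hand side controls $\int_{B_r\setminus E_{\lambda_k}}H(x,|\na u|)\,dx$, while the right-hand side is bounded by
\[
	\int_{B_{2r}}H(x,|F|)\,dx\ +\ \sum_{j\ge k}\lambda_j\,|E_{\lambda_j}\setminus E_{\lambda_{j+1}}|.
\]
A standard layer-cake identity, summed in $k$ and combined with the definition of the bad sets through the weak-type $(1,1)$ bound for $\mathcal M$, converts the last sum into a term of the form $\int_{B_{2r}}[H(x,|\na u|)]^{\de'}$ for some $\de'<1$, at the cost of a small factor that can be absorbed when $\de$ is taken close enough to $1$ (this is what determines $\de_0$).

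\textbf{Main obstacle.} The hardest step, both conceptually and technically, is the construction and estimation of the Lipschitz truncation in the $(p,q)$-phase: one must ensure that a \emph{single} truncation level $\lambda$, acting on the non-homogeneous quantity $H(x,|\na u|)$, produces a substitute $u_\lambda$ whose gradient is controlled by $\lambda$ in the same $H$-metric and that the Whitney Poincar\'e estimate has the right scaling. This is where the sharpness of \eqref{exp} and the choice of phase-dependent truncation radius enter, and the required $\de_0>1-1/q$ ultimately comes from the absorption argument on the $(p,q)$-phase cubes. Once this is in place, invoking a suitable form of Gehring's lemma on the resulting reverse H\"older inequality yields the statement of Theorem \ref{main}, with $R_0$ arising from the smallness needed to absorb the $[a]_{0,\al}r^\al$-terms controlling the difference between the two phases.
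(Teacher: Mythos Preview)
Your proposal contains a concrete gap in the choice of test function. You write that you would take $\varphi=\eta(u-u_{\lambda_k})$ as a test function in \eqref{def:3}. But $u$ is only assumed to satisfy \eqref{main:1}, so $u-u_{\lambda_k}$ is not Lipschitz (only $u_{\lambda_k}$ is), and hence $\eta(u-u_{\lambda_k})$ is not an admissible test function for a \emph{very weak} solution. Moreover, even formally this choice cannot do what you claim: on the good set $\{u=u_{\lambda_k}\}$ one has $\varphi\equiv 0$, so there is no coercive contribution there, yet you assert that ``the left-hand side controls $\int_{B_r\setminus E_{\lambda_k}}H(x,|\nabla u|)\,dx$''. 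The correct move (and what the paper does) is to test with the Lipschitz truncation itself, namely $v_\lambda\eta^q$ where $v_\lambda$ agrees with $v=(u-(u)_{B_{2R}})\eta$ on the good set; coercivity then produces $\int_{\text{good}}H(x,|\nabla u|)\eta^{q+1}\,dx$, while the bad-set contribution is controlled by the Lipschitz bound on $\nabla v_\lambda$.

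Beyond this, your overall architecture differs from the paper's in a substantive way. You propose a cube-by-cube $p$-phase versus $(p,q)$-phase dichotomy \`a la \cite{BCM1,CM1}. The paper avoids this entirely: it defines a single maximal function $G$ built from both $G_p$ and $G_q$ (see \eqref{def_g}--\eqref{def_g_q}, which also include $|u-(u)_{B_{2R}}|/R$ and $|F|$, not just $|\nabla u|$ as in your bad set), and then proves the \emph{pair} of bounds $|\nabla v_\lambda|\lesssim\lambda^{1/p}$ and $[a(x)]^{1/q}|\nabla v_\lambda|\lesssim\lambda^{1/q}$ simultaneously (Lemma~\ref{lip4}). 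The second bound is exactly what lets one estimate $\int a(x)|\nabla u|^{q-1}|\nabla v_\lambda|\,dx$ by writing $a=[a]^{(q-1)/q}[a]^{1/q}$ and pulling out $\lambda^{1/q}$, without ever deciding the phase of a Whitney cube. Your dichotomy route might be made to work, but it would require substantially more bookkeeping of the $[a]_{0,\alpha}r^\alpha$ errors, and your maximal function as written omits the $|u-(u)_{B_{2R}}|$ and $|F|$ pieces that are needed to close the Whitney Poincar\'e estimates and handle the inhomogeneity.
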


The result of the above theorem is new in the sense of the self-improving property of very weak solutions to double phase problems as far as we are concerned. Clearly, the result of the above theorem can be reduced to the standard $p$-Laplace case \cite{IS,L} when $a(\cdot)\equiv 0$.  Let us neatly explain the key ideas of the proof of Theorem \ref{main}. The first target is to obtain a Caccioppoli type inequality of Lemma \ref{caccio} via the techniques of Lipschitz truncation. With a fixed ball $B_{2R}\subset\Omega$ of a suitable size $R$ and a positive number $\lambda$ appropriately large, we construct an admissible function $\phi_{\lambda}\in W^{1,\infty}_{0}(B_{2R})$ satisfying 
\begin{align}
	\label{at:1}
	\frac{|\phi_{\lambda}(x)|}{R}+|\na \phi_{\lambda}(x)|\lesssim\la^\frac{1}{p}\text{ in }B_{R}
\end{align}
and 
\begin{align}
	\label{at:2}
	[a(x)]^\frac{1}{q}\frac{|\phi_{\la}(x)|}{R}+[a(x)]^\frac{1}{q}|\na\phi_{\lambda}(x)|\lesssim \la^\frac{1}{q}\text{ in }B_{R}
\end{align}
(see Lemma \ref{lip1} and Lemma \ref{lip4} below) in order to estimate the key term
\begin{align*}
	I:=\int_{\left\{x\in B_{R} :\, H(x,|\na u|)>\la\right\}}\left(|\na u|^{p-1}+a(x)|\na u|^{q-1}\right)|\na \phi_{\lambda}|\ dx
\end{align*}
in a proper way (see Lemma \ref{pre_p_reverse}). Using only \eqref{at:1}, it can be seen
\begin{align}
	\label{at:4}
	\begin{split}
		I\lesssim&\int_{\left\{x\in B_{R} :\, H(x,|\na u|)>\la\right\}}|\na u|^{p-1}\lambda^\frac{1}{p}\ dx\\
		&+\int_{\left\{x\in B_{R} :\, H(x,|\na u|)>\la\right\}}a(x)|\na u|^{q-1}\lambda^\frac{1}{p}\ dx.
	\end{split}
\end{align}
However, it is possible to deal with the first term of \eqref{at:4} as in the $p$-Laplacian case, but the second term in \eqref{at:4} causes a trouble to make further estimates. To overcome such a difficulty, we apply \eqref{at:2} to estimate \eqref{at:4} in the following way

\begin{align*}
	\begin{split}
		I\lesssim&\int_{\left\{x\in B_{R} :\, H(x,|\na u|)>\la\right\}}|\na u|^{p-1}\la^\frac{1}{p}\ dx\\
		&+\int_{\left\{x\in B_{R} :\, H(x,|\na u|)>\la\right\}}[a(x)]^\frac{q-1}{q}|\na u|^{q-1}\lambda^\frac{1}{q}\ dx.
	\end{split}
\end{align*}
Then the second term of the above display can be treated as usually done for the $q$-Laplacian case.

\begin{remark}
	We remark that $\delta_0\in(1-1/q,1)$ in the statement of Theorem \ref{main} is close enough to $1$ due to a counter example constructed for the $p$-Laplace equation $(a(\cdot)\equiv 0)$, see \cite{CT}. We also would like to point out that the main assumption \eqref{exp} is optimal in the sense that, for a given very weak solution $u\in W^{1,1}(\Omega,\RR^{N})$ to the system \eqref{me} with 
	\begin{align}
		\label{rmk:1}
		\int_{\Omega}\left[H(x,|\na u|)\right]^{\delta}\,dx < \infty
	\end{align}
for any exponent $\delta\in (0,1)$ sufficiently close to $1$, we would have 
\begin{align}
	\label{rmk:2}
		\int_{\Omega}\left[H(x,|\na u|)\right]^{\delta}\,dx \approx
		\int_{\Omega} \left[|\na u|^{p\delta} + a_{\delta}(x)|\na u|^{q\delta}\right]\,dx=: \mathcal{P}_{\delta}(u,\Omega),
\end{align}
where the coefficient function defined by $a_{\delta}(x):= [a(x)]^{\delta}$ is a member of $C^{0,\alpha\delta}(\Omega)$, and we need the following condition for the new functional $\mathcal{P}_{\delta}$ in \eqref{rmk:2}
\begin{align*}
	\delta q\leqslant \delta p+\frac{(\alpha \delta)(\delta p)}{n}
\end{align*}
in order to have the absence of Lavrentiev phenomenon, see for instance \cite[Theorem 4.1]{CM1} and \cite{Zh1, ZKO} for details. In turn, the last display yields that
\begin{align*}
	q\leqslant p+\frac{\alpha \delta p}{n} <p+\frac{\alpha p}{n}
\end{align*}
for every $\delta\in (0,1)$ enough close to $1$. In this regard, the delicate borderline case $q = p + \frac{p\alpha}{n}$ is not considered in this paper.
\end{remark}

\begin{remark}
Here we point out that if the display \eqref{main:1} is valid for some $\delta\in (0,1)$ sufficiently close to 1, then \eqref{def:2} holds true by means of H\"older inequalities that
\begin{align*}
\begin{split}
    &\int_{\Omega}\left(|\na u|^{p-1}+a(x)|\na u|^{q-1}\right)\ dx\\
    &\le |\Om|^\frac{\delta p-p+1}{\delta p}\left(\int_{\Omega}|\na u|^{\delta p}\ dx\right)^\frac{p-1}{\delta p}\\
    &\quad+\left(\int_{\Om}[a(x)]^\frac{\delta}{\delta q-q+1}\ dx\right)^\frac{\delta q-q+1}{\delta q}\left(\int_{\Om}[a(x)|\na u|^{q}]^\delta\ dx\right)^\frac{q-1}{\delta q}\\
    &\le \Bigl(|\Om|^\frac{\delta p-p+1}{\delta p}+\|a\|_{L^\infty(\Om)}^\frac{1}{q}|\Om|^\frac{\delta q-q+1}{\delta q}\Bigr)\left(\int_{\Om}[H(x,|\na u|)]^\delta\ dx+1\right)^\frac{q-1}{\delta q},
\end{split}
\end{align*}
where we have also used the simple identity  $a(x)=[a(x)]^\frac{1}{q}[a(x)]^\frac{q-1}{q}$.
\end{remark}

Finally, we outline the organization of the paper. In the next section, we introduce the notations and basic tools to be used throughout the paper. Section \ref{sec3} is devoted to obtaining the reverse H\"older inequality of a very weak solution to the system \eqref{me}. In the last section, we provide the proof of the main Theorem \ref{main}.

\section{Preliminaries}
We denote by $B_{r}(x_0) = \{ x\in\RR^n : |x-x_0|<r \}$ the open ball in $\RR^n$ centered at $x_0\in \RR^n$ with radius $r> 0$. If the center is clear in the context, we shall denote it by $B_{r}\equiv
B_{r}(x_0)$. As usual, we denote by $c$ a generic positive constant, possibly varying from line to line; special constants will be denoted by symbols such as $c_1, c_{*}, c_{\varepsilon}$, and so on. All such constants will always be larger than one; moreover, relevant dependencies on parameters will be emphasized using brackets, that is, for example, $c\equiv c(n,p,q,\nu,L)$ means that $c$ is a constant depending only on $n,p,q,\nu,L$. 
With $f:\mathcal{B}\rightarrow \RR^{k}$ $(k\geqslant 1)$ being a measurable map for a measurable subset $\mathcal{B}\subset\RR^n$ having a finite and positive measure, we denote by
\begin{align*}
	(f)_{\mathcal{B}}\equiv \fint_{\mathcal{B}} f(x)\,dx = \frac{1}{|\mathcal{B}|}\int_{\mathcal{B}}f(x)\,dx
\end{align*}
to mean its integral average over $\mathcal{B}$. For a given number $\alpha\in (0,1]$, the space $C^{0,\alpha}(\Omega)$ consists of measurable functions $f:\Omega\rightarrow \RR$ such that 
\begin{align*}
	\norm{f}_{C^{0,\alpha}(\Omega)} := [f]_{0,\alpha;\Omega} + \norm{f}_{L^{\infty}(\Omega)}<\infty,
\end{align*}
where for any open subset $\mathcal{B}\subset\Omega$, the semi-norm is defined by 
\begin{align*}
	[f]_{0,\alpha;\mathcal{B}}:= \sup\limits_{x,y\in\mathcal{B},x\neq y} \frac{|f(x)-f(y)|}{|x-y|^{\alpha}}
	\quad\text{and}\quad
	[f]_{0,\alpha}\equiv [f]_{0,\alpha;\Omega}.
\end{align*}

Next, we introduce some auxiliary tools to be used in the later parts of the paper. 
For a given function $f\in L^1_{\loc}(\RR^n)$, the uncentered Hardy-Littlewood maximal function of $f$ is defined as
\begin{equation}\label{def_max}
	M(f)(x):=\sup_{x\in B_{r}(x_0)}\fint_{B_r(x_0)}|f(y)|\ dy,
\end{equation}
where supremum is taken over all balls containing the point $x$. The first tool we report here is the following classical strong type estimate of the maximal operator, see for instance \cite[Lemma 2.1.6]{G}.
\begin{lemma}
	\label{p_p}
	For any $f\in L^s(\RR^n)$ with $1<s<\infty$, there exists a constant $c(n,s)\equiv 2^\frac{n}{s}s(s-1)^{-1}$ such that
	\begin{equation*}
		\int_{\RR^n}|M(f)|^s\ dx\leqslant c(n,s)\int_{\RR^n}|f|^s\ dx.
	\end{equation*}
\end{lemma}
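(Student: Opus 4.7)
The plan is the standard Marcinkiewicz-interpolation route: combine the weak-type $(1,1)$ inequality for $M$ with the trivial strong-type $(\infty,\infty)$ bound $\norm{M(f)}_{L^\infty(\RR^n)}\leqslant \norm{f}_{L^\infty(\RR^n)}$, and then integrate the distribution function against $s\la^{s-1}\,d\la$ to recover the $L^s$ estimate with a constant of the claimed shape.

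First I would establish the weak-type $(1,1)$ bound
\begin{equation*}
|\{x\in\RR^n : M(f)(x)>\la\}| \leqslant \frac{A_n}{\la}\int_{\RR^n}|f(y)|\,dy \qquad\text{for every }\la>0,
\end{equation*}
with a purely dimensional constant $A_n$. By the definition \eqref{def_max}, each point $x$ in the level set $E_\la:=\{M(f)>\la\}$ sits inside some ball $B_x$ satisfying $\int_{B_x}|f|>\la|B_x|$, and a Vitali-type selection theorem then extracts a countable pairwise-disjoint subfamily $\{B_{x_i}\}$ whose fixed-factor enlargements still cover $E_\la$. Summing $|B_{x_i}|\leqslant \la^{-1}\int_{B_{x_i}}|f|$ over this disjoint collection yields the claim.

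To pass from weak $(1,1)$ to strong $(s,s)$ I would use the Calder\'on truncation. For each $\la>0$, split $f=f_1+f_2$ with $f_1:=f\,\rchi_{\{|f|>\la/2\}}$ and $f_2:=f-f_1$. Since $\norm{f_2}_{L^\infty(\RR^n)}\leqslant \la/2$, the trivial bound gives $M(f_2)\leqslant \la/2$, and sublinearity of $M$ therefore forces $\{M(f)>\la\}\subset\{M(f_1)>\la/2\}$. Applying the weak-type estimate above to $f_1$ yields
\begin{equation*}
|\{M(f)>\la\}| \leqslant \frac{2A_n}{\la}\int_{\{|f|>\la/2\}}|f(x)|\,dx.
\end{equation*}

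Finally, I would feed this distributional bound into the layer-cake representation $\int_{\RR^n}|M(f)|^s\,dx = s\int_0^\infty \la^{s-1}|\{M(f)>\la\}|\,d\la$ and exchange the order of integration by Fubini. The inner integral $\int_0^{2|f(x)|}\la^{s-2}\,d\la$ evaluates to $(2|f(x)|)^{s-1}/(s-1)$, producing an inequality of the form $\int|M(f)|^s\leqslant c(n,s)\int|f|^s$ with $c(n,s)$ proportional to $s/(s-1)$ times a dimensional factor. The only delicate point is the bookkeeping of the sharp numerical constant: matching exactly the value $2^{n/s}\cdot s/(s-1)$ quoted in the statement requires the economical version of the covering lemma used in \cite{G} together with an optimized choice of truncation level in the Calder\'on splitting, rather than the crude symmetric choice $\la/2$ above. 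Since only the structural dependence $c(n,s)\sim s/(s-1)$ is ever used later in the paper, this constant-chasing step is the one I would postpone or defer to the reference.
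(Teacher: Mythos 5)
Your proposal is correct and is essentially the standard proof (weak-type $(1,1)$ via a Vitali covering, the trivial $L^\infty$ bound, Calder\'on truncation, and the layer-cake formula with Fubini), which is the same route as the cited reference \cite[Lemma 2.1.6]{G}; the paper itself offers no proof beyond that citation. The one point you defer, matching the exact constant $2^{n/s}s(s-1)^{-1}$, is harmless here, since the paper only uses the continuity of $c(n,s)$ in $s$ on compact subintervals of $(1,\infty)$ and the qualitative blow-up rate $s/(s-1)$, both of which your argument already delivers.
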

Note that since the constant $c\equiv c(n,s)$ defined in Lemma~\ref{p_p} is continuous in $1<s<\infty$. Thus, for fixed exponents $1<s_1<s_2<\infty$, there exists a constant $c\equiv c(n,s_1,s_2)$ such that
\begin{equation*}
	\int_{\RR^n}|M(f)|^s \ dx\leqslant c\int_{\RR^n}|f|^s\ dx
\end{equation*}
holds, whenever $f\in L^s(\RR^n)$ with $1<s_1<s<s_2$. 

Next, we need a Poincar\'e type inequality in the following, see for instance \cite[Proposition 2.1]{D} and \cite[Theorem 5.47]{KLV}.
\begin{lemma}\label{boundary_poincare}
	Let $f\in W_0^{1,s}(B_{R})$ with $1<s<\infty$ and a ball $B_{R}\subset\RR^{n}$. Then there exists a constant $c\equiv c(n,s)$, which is continuous with respect to $s$-variable, such that
	\begin{equation*}
		\int_{B_{4r}(y)\cap B_{R}}|f|^s\ dx\leqslant cr^s\int_{B_{4r}(y)\cap B_{R}}|\na f|^s\ dx
	\end{equation*}
	holds, whenever $B_r(y)\subset\RR^{n}$ is a ball with $B_r(y)\cap B_{R}^{c}\ne \emptyset$.
\end{lemma}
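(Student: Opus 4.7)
The plan is to reduce this boundary Poincar\'e inequality to the standard interior Poincar\'e--Wirtinger inequality on the ball $B_{4r}(y)$, by exploiting two observations: the zero extension of $f$ across $\pa B_R$, and the fact that the geometric hypothesis $B_r(y)\cap B_R^c\ne\emptyset$ forces $f$ to vanish on a definite fraction of $B_{4r}(y)$. Since $f\in W_0^{1,s}(B_R)$, I would first extend $f$ by zero to all of $\RR^n$, obtaining $f\in W^{1,s}(\RR^n)$ with $f=0$ and $\na f=0$ a.e.\ on $B_R^c$. This immediately allows replacement of the integrals over $B_{4r}(y)\cap B_R$ in the statement by the corresponding integrals over the full ball $B_{4r}(y)$, so the problem reduces to an estimate on a single ball.

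Next I would establish the quantitative measure bound $|B_{4r}(y)\cap B_R^c|\geqslant c(n)|B_{4r}(y)|$. Fixing any $z\in B_r(y)\cap B_R^c$ and letting $x_0$ denote the center of $B_R$ (so $|z-x_0|\geqslant R$), consider the half-ball $H:=\{w\in B_r(z):(w-z)\cdot(z-x_0)\geqslant 0\}$. For $w\in H$ one has $|w-x_0|^2=|w-z|^2+2(w-z)\cdot(z-x_0)+|z-x_0|^2\geqslant R^2$, hence $H\subset B_R^c$. Combined with $H\subset B_r(z)\subset B_{2r}(y)\subset B_{4r}(y)$ and $|H|=\tfrac{1}{2}|B_r|$, this yields the desired measure bound.

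Finally I would apply the standard Poincar\'e--Wirtinger inequality on $B_{4r}(y)$,
\begin{equation*}
\norm{f-(f)_{B_{4r}(y)}}_{L^s(B_{4r}(y))}\leqslant c(n,s)\,r\,\norm{\na f}_{L^s(B_{4r}(y))},
\end{equation*}
whose constant $c(n,s)$ is well known to be continuous in $s$. Since $f\equiv 0$ on the set $E:=B_{4r}(y)\cap B_R^c$ with $|E|\geqslant c(n)|B_{4r}(y)|$, one has $|(f)_{B_{4r}(y)}|\,|E|^{1/s}\leqslant\norm{f-(f)_{B_{4r}(y)}}_{L^s(B_{4r}(y))}$, and the triangle inequality combined with the above yields $\norm{f}_{L^s(B_{4r}(y))}\leqslant c(n,s)\,r\,\norm{\na f}_{L^s(B_{4r}(y))}$. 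Raising to the $s$-th power and recalling that $f$ and $\na f$ vanish on $B_R^c$ delivers the lemma; the continuity of the final constant in $s$ is inherited from the continuity of the classical Poincar\'e constant together with the elementary factor $1+c(n)^{-1/s}$. The only subtlety, and the main point requiring care, is bookkeeping: one must separate the purely dimensional constants from the $s$-dependent ones so that the continuity assertion is verifiable, but there is no serious analytic obstruction.
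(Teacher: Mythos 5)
Your proof is correct. The paper itself gives no proof of this lemma --- it is stated as a known result with references to \cite{D} and \cite{KLV} --- and your argument (zero extension of $f\in W_0^{1,s}(B_R)$, the half-ball construction showing $|B_{4r}(y)\cap B_R^c|\geqslant c(n)|B_{4r}(y)|$, then Poincar\'e--Wirtinger on $B_{4r}(y)$ combined with the vanishing of $f$ on a definite measure fraction to absorb the mean value) is precisely the standard measure-density argument underlying those citations, so nothing further is needed; the continuity of the constant in $s$ is indeed inherited from the classical Poincar\'e constant, which via the Riesz-potential representation can even be taken uniform in $s$.
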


We end up the present section with the following classical iteration lemma, which can be found in \cite[Lemma 6.1]{Gi1}.
\begin{lemma}\label{iter_lemma}
	Let $0< R_0< R_1<\infty$ be given numbers and let $h : [R_0,R_1] \to \RR$ be a non-negative and bounded function. Furthermore, let $\theta \in (0,1)$ and $A,B,\gamma_1,\gamma_2 \geq 0$ be fixed constants and	suppose that
	\[
	h(s) \leqslant \theta h(t) + \frac{A}{(s-t)^{\gamma_1}} + \frac{B}{(s-t)^{\gamma_2}}
	\]
	holds for all $R_0 \leqslant t < s \leqslant R_1$. Then there exists a positive constant $c \equiv c(\theta,\gamma_1,\gamma_2)$ such that
	$$
	h(R_0) \leqslant c \left(\frac{A}{(R_1-R_0)^{\gamma_1}} + \frac{B}{(R_1-R_0)^{\gamma_2}}\right).
	$$
\end{lemma}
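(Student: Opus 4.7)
The argument is the classical Giaquinta telescoping. The plan is to build a geometric sequence of points inside $[R_0,R_1]$, feed the assumed inequality into itself along this sequence, and use the boundedness of $h$ to eliminate a residual term in the limit.

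Fix $\tau\in(0,1)$ to be selected later. Set $t_0:=R_0$ and $t_{i+1}:=t_i+(1-\tau)\tau^i(R_1-R_0)$ for $i\geq 0$. Because $\sum_{i=0}^{\infty}(1-\tau)\tau^i=1$, the sequence $(t_i)$ is strictly increasing, remains in $[R_0,R_1]$, and converges to $R_1$; moreover $t_{i+1}-t_i=(1-\tau)\tau^i(R_1-R_0)$. Applying the hypothesis to the consecutive pair $(t_i,t_{i+1})$ so as to control the value of $h$ at the smaller point by $\theta$ times the value at the larger one, I obtain
\[
h(t_i)\leq\theta\,h(t_{i+1})+\frac{A(1-\tau)^{-\gamma_1}}{(R_1-R_0)^{\gamma_1}}\,\tau^{-i\gamma_1}+\frac{B(1-\tau)^{-\gamma_2}}{(R_1-R_0)^{\gamma_2}}\,\tau^{-i\gamma_2}.
\]

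Iterating this bound $k$ times yields the telescoping estimate
\[
h(R_0)\leq\theta^{k}h(t_k)+\frac{A(1-\tau)^{-\gamma_1}}{(R_1-R_0)^{\gamma_1}}\sum_{i=0}^{k-1}\bigl(\theta\tau^{-\gamma_1}\bigr)^{i}+\frac{B(1-\tau)^{-\gamma_2}}{(R_1-R_0)^{\gamma_2}}\sum_{i=0}^{k-1}\bigl(\theta\tau^{-\gamma_2}\bigr)^{i}.
\]
I then select $\tau\in(0,1)$ close enough to $1$ that $\theta\,\tau^{-\max(\gamma_1,\gamma_2)}<1$; this is possible because $\theta<1$, and the degenerate case $\gamma_1=\gamma_2=0$ is trivial. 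With such a $\tau$, both geometric series converge to sums depending only on $\theta,\gamma_1,\gamma_2$. Since $h$ is bounded on $[R_0,R_1]$ and $\theta^{k}\to 0$, the residual $\theta^{k}h(t_k)$ vanishes as $k\to\infty$, yielding the desired inequality with a constant $c=c(\theta,\gamma_1,\gamma_2)$.

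The only delicate point is the simultaneous choice of $\tau$ making both $\theta\tau^{-\gamma_j}<1$; the final constant $c$ absorbs $(1-\tau)^{-\gamma_j}$ and $(1-\theta\tau^{-\gamma_j})^{-1}$ for $j=1,2$. The boundedness of $h$ is indispensable for discarding $\theta^{k}h(t_k)$ in the passage to the limit; without it the conclusion genuinely fails, so this hypothesis is not merely cosmetic.
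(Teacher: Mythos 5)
Your proof is correct and is precisely the classical telescoping argument for this lemma (the paper gives no proof, citing Giusti's Lemma 6.1, whose proof is exactly this iteration): the geometric partition $t_{i+1}-t_i=(1-\tau)\tau^i(R_1-R_0)$, the choice $\tau^{\max(\gamma_1,\gamma_2)}>\theta$, and the use of boundedness to discard $\theta^k h(t_k)$ are all as in the standard reference. One remark: you correctly applied the hypothesis in the form $h(t)\leqslant\theta h(s)+A(s-t)^{-\gamma_1}+B(s-t)^{-\gamma_2}$ for $t<s$ (smaller argument controlled by larger), whereas the lemma as printed transposes $s$ and $t$; that is a typo in the statement, since the literal version never constrains $h(R_0)$ and is exactly how the lemma is invoked in the proof of Theorem \ref{main}, so your reading is the intended one.
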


\section{Reverse H\"older inequality}
\label{sec3}

Throughout this section, let $u\in W^{1,1}(\Omega,\RR^{N})$ always be a very weak solution to the system \eqref{me} in the sense of Definition \ref{def:vw} under the assumptions \eqref{str} and \eqref{exp}. A main purpose of this section is to derive a Caccioppoli type inequality and reverse H\"older type inequality for very weak solution $u$. 

\subsection{Basic settings and notations}
\label{subsec3-1} We introduce the basic settings and notations to be used throughout the present section under which we will work on.
\begin{enumerate}
	\item[1.] 	Firstly, we fix constants $\gamma, \tilde{\gamma}$ with 
\begin{equation}
	\label{def_ga}
	\frac{1}{p}<\ga:=\frac{p+2}{3p}<\tilde{\ga}:= \frac{2p+1}{3p}< 1,
\end{equation}	
	\item[2.] Next, we select a constant $\de_0\in(0,1)$ to satisfy
\begin{equation}
	\label{pre_de}
	\begin{array}{lll}
		\tilde{\gamma} <\de_0<1&\text{ and }&q< p+\frac{(\de_0 \al) p}{n},
	\end{array}
\end{equation}
where the possibility of such a choice of $\delta_0$ comes from the main assumption \eqref{exp}. The validity of $\eqref{pre_de}_{2}$ implies 
\begin{equation}
	\label{pre_a2}
	\frac{\alpha}{q}-\frac{n}{\de_0 p}\left(1-\frac{p}{q}\right)=\frac{n}{\de_0 qp}\left(\frac{\de_0 \alpha p}{n}-(q-p)\right)> 0.
\end{equation}
Note that the inequalities \eqref{pre_de}-\eqref{pre_a2} hold true with $\delta_0$ replaced by any number $\delta\in (\delta_0,1)$ once they are valid for $\delta_0\in (0,1)$ as in \eqref{pre_de}.
	\item[3.] With those constants fixed in \eqref{def_ga}-\eqref{pre_de}, let $B_{2R}\equiv B_{2R}(x_0)\subset \Omega$ be a fixed ball with $R\leqslant 1$ such that 
	\begin{equation}\label{a2}
	R^{\frac{\alpha}{q}-\frac{n}{\de p}\left(1-\frac{p}{q}\right)}\left(\int_{\Om}|\na u|^{\de p}\ dx\right)^{\frac{1}{\de p}\left(1-\frac{p}{q}\right)}\leqslant 1,
\end{equation}
\begin{equation}\label{le_la}
	\fint_{B_{2R}}\left[H(x,|\na u|)+H(x,|F|)\right]^{\de}\ dx\leqslant \La^{\de}
\end{equation}
and
\begin{equation}\label{ge_la}
	\fint_{B_{R}}\left[H(x,|\na u|)+H(x,|F|)\right]^{\de}\ dx= \La^{\de}
\end{equation}
for some fixed constants $\de\in (\de_0,1)$ and $\Lambda\geqslant 1$. 
\item[4.] With the fixed ball $B_{2R}$ satisfying \eqref{a2}-\eqref{ge_la} and the fixed constant $\gamma$ in \eqref{def_ga}, we denote
\begin{equation}
\label{def_g}
	G(x):= \left[M\left(G_p+G_q\right)(x)\right]^\frac{1}{p\gamma},
\end{equation}
where $M$ is the maximal operator introduced in \eqref{def_max},
\begin{align}
	\label{def_g_p}
	\begin{split}
		&G_p(y):=\left(\left|\frac{u(y)-(u)_{B_{2R}}}{2R}\right|^{p\gamma}+|\na u(y)|^{p\gamma}+|F(y)|^{p\gamma}\right)\rchi_{B_{2R}}(y)
	\end{split}
\end{align}
and
\begin{align}
	\label{def_g_q}
	\begin{split}
		G_q(y):=[a(y)]^\gamma\left(\left|\frac{u(y)-(u)_{B_{2R}}}{2R}\right|^{q\gamma}+|\na u(y)|^{q\gamma}+|F(y)|^{q\gamma}\right)\rchi_{B_{2R}}(y).
	\end{split}
\end{align}
\item[5.] We denote the lower-level set of the function $G$ defined in \eqref{def_g} as
\begin{equation}
	\label{level_set}
	E\left(\la\right):=\left\{x\in \RR^n\mid G(x)\leqslant \la\right\}\quad (\lambda>0).
\end{equation}
\item[6.] Let $0\leqslant \eta\in W_0^{1,\infty}(B_{2R})$ be a cut-off function with
\begin{equation}
\label{cut_off}
	\begin{array}{lll}
		\eta\equiv 1\text{ in }B_{R}&\text{and}&|\eta|+R|\na \eta|\leqslant c(n)\text{ in }B_{2R},
	\end{array}
\end{equation}
where $B_{2R}$ is the same ball satisfying \eqref{a2}-\eqref{ge_la}. Also, we define another function $v$ by
\begin{align}
	\label{func_v}
	v(x):=\left(u(x)-(u)_{B_{2R}}\right)\eta(x)
\end{align}
\item[7.] Here we point out that the function $v$ in \eqref{func_v} can not be a test function in the equation \eqref{me} due to the lack of integrability assumption of $\na u$. In this regard, we truncate the function $v$ in \eqref{func_v} on the lower-level set of $v$ and $\na v$ and extend the truncated function to be Lipschitz in the upper-level set.  Clearly, the set $E\left(\lambda\right)^c$ in \eqref{level_set} is an open set for every $\lambda>0$ by the lower semi-continuity of the maximal function. In the remaining part of the section, we shall always consider the number $\lambda$ such that 
\begin{align}
	\label{lambda}
	\lambda^{1/p} \geqslant c_{\gamma}\Lambda^{1/p}
\end{align}
with an universal constant $c_{\gamma}$ to be determined via Lemma \ref{p_lip} below and the number $\Lambda$ satisfying \eqref{le_la}-\eqref{ge_la}. As a consequence of Lemma \ref{p_lem} below together with  \eqref{def_ga}-\eqref{pre_de}, we observe that $B_{2R}\cap E(\la^{1/p})\neq \emptyset$. To extend the function $v|_{E(\la^{1/p})}$ to be locally Lipschitz over $\RR^n$, we revisit a classical Whitney covering lemma to the open set $E(\la^{1/p})^c$, which has been widely used for parabolic problems previously, see for instances \cite{B,KL}, and we shall use a version applied in \cite[Section 2.3]{DSSV}. Therefore, there exists a countable family of open balls $\{B_i\}_{i\in\mathbb{N}}$, $B_i\equiv B_{r_i}(x_i)\subset \RR^{n}$ satisfying the following properties:
\begin{description}
	\item[W1\label{w1}] $\bigcup_{i\in \mathbb{N}}\frac{1}{2} B_i= E(\lambda^{1/p})^c$.
	\item[W2\label{w2}] $8B_i\subset E(\la^{1/p})^c$ and $16B_i\cap E(\la^{1/p})\ne \emptyset$ for all $i\in \mathbb{N}$.
	\item[W3\label{w3}] If $B_i\cap B_j\ne \emptyset$ for some $i,j\in \mathbb{N}$, then $\frac{1}{2}r_j\leqslant r_i\leqslant 2r_j$.
	\item[W4] $\frac{1}{4}B_i\cap \frac{1}{4}B_j=\emptyset$ for all $i,j\in\mathbb{N}$ with $i\neq j$.
	\item[W5] Every point $x\in E(\la^{1/p})^c$ belongs to at most $c(n)$ balls of the family $\{4B_i\}_{i\in \mathbb{N}}$.
\end{description}
With the covering $\{B_i\}_{i\in\mathbb{N}}$ fixed as above, there exists a partition of unity $\{\psi_i\}_{i\in\mathbb{N}}\subset C_0^\infty(\RR^{n})$ such that
\begin{description}
	\item[P1] $\displaystyle\rchi_{\frac{1}{2}B_i}\leqslant \psi_i\leqslant \rchi_{\frac{3}{4}B_i}$,
	\item[P2\label{p2}]
	$|\psi_i(x)|+r_i|\na \psi_i(x)|\leqslant c(n)$ for every $x\in \RR^{n}$.
\end{description}
Next we define the set 
\begin{align}
	\label{set_A_i}
	A_i:=\left\{j\in\mathbb{N}\mid \frac{3}{4}B_i\cap\frac{3}{4}B_j\ne\emptyset\right\}
\end{align}
for every $i\in\mathbb{N}$. Therefore, the following properties are also satisfied:
\begin{description}
	\item[P3\label{p3}] $\sum\limits_{j\in A_i}\psi_j=1$ in $B_i$.
	\item[W6\label{w6}] $|B_i\cap B_j|\ge c(n)^{-1}\max\{|B_i|,|B_j|\}$ whenever $j\in A_{i}$.
	\item[W7]: $|\frac{3}{4}B_i\cap\frac{3}{4}B_j|\ge c(n)^{-1}\max\{|B_i|,|B_j|\}$ whenever $j\in A_{i}$.
	\item[W8\label{w8}] the cardinality of $A_i$, $\# A_i$, is uniformly bounded with $\# A_i\leqslant c(n)$.
\end{description}
\item[8.]
Under this Whitney covering as above, we shall consider a truncated function $v_\la $ given by
\begin{align}
	\label{lip_trunc}
	v_\la(x):=v(x)-\sum\limits_{i\in\mathbb{N}}\psi_i(x)(v(x)-v_i),
\end{align}
where
\begin{align}
	\label{v_i}
	v_i:=
	\begin{cases}
		\fint_{\frac{3}{4}B_i}v\ dx &\text{ if }\frac{3}{4}B_i\subset B_{2R},\\
		0&\text{ otherwise.}
	\end{cases}
\end{align}
\end{enumerate}

\subsection{Lipschitz truncation and Caccioppoli inequality}
Under the basic settings and notations in the previous subsection, our next purpose is to construct admissible test functions to the system \eqref{me} by using the techniques of Lipschitz truncation in order to obtain a Caccioppoli type inequality below, see Lemma \ref{caccio}. To go further, first we provide a Poincar\'e type inequality for a very weak solution $u$ to \eqref{me}.

\begin{lemma}
	\label{poinca_lem}
	Under the settings and notations of Subsection \ref{subsec3-1}, there exists a constant $c\equiv c(n,N,p,q,[a]_{0,\alpha})$ such that 
	\begin{align*}
		\begin{split}
			\fint_{B_{2R}}\left[H\left(x,\left|\frac{u-(u)_{B_{2R}}}{2R}\right| \right)\right]^{\delta}\,dx
			\leqslant
			c 
			\fint_{B_{2R}}\left[H(x,|\na u|)\right]^{\delta}\,dx.
		\end{split}
	\end{align*}
\end{lemma}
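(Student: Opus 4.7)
The plan is to split the integrand via the subadditivity $(A+B)^{\de}\leqslant A^{\de}+B^{\de}$, valid for $\de\in(0,1)$, namely
\[
[H(x,z)]^{\de}=\bigl(|z|^{p}+a(x)|z|^{q}\bigr)^{\de}\leqslant |z|^{p\de}+[a(x)]^{\de}|z|^{q\de},
\]
and estimate the two resulting pieces separately. The $p$-piece is immediate: since $\de>\tilde{\ga}=(2p+1)/(3p)$ one has $p\de>(2p+1)/3\geqslant 1$, so the classical Poincar\'e inequality at exponent $p\de$ on $B_{2R}$ gives
\[
\fint_{B_{2R}}\left|\frac{u-(u)_{B_{2R}}}{2R}\right|^{p\de}dx\leqslant c\fint_{B_{2R}}|\na u|^{p\de}dx\leqslant c\fint_{B_{2R}}[H(x,|\na u|)]^{\de}dx,
\]
using $|\na u|^{p\de}\leqslant[H(x,|\na u|)]^{\de}$ pointwise.

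For the $q$-piece I would distinguish two regimes according to the size of $a_{\min}:=\inf_{B_{2R}}a$. In the non-degenerate regime $a_{\min}\geqslant[a]_{0,\al}(2R)^{\al}$, H\"older continuity of $a$ forces $a(x)\leqslant 2a_{\min}$ on $B_{2R}$, so the weight $[a(x)]^{\de}$ is comparable to the constant $a_{\min}^{\de}$; applying Poincar\'e at exponent $q\de\geqslant p\de\geqslant 1$ and using $a_{\min}^{\de}\leqslant[a(x)]^{\de}$ bounds the $q$-piece by $c\fint_{B_{2R}}[a(x)]^{\de}|\na u|^{q\de}dx\leqslant c\fint_{B_{2R}}[H(x,|\na u|)]^{\de}dx$.

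The delicate regime is $a_{\min}<[a]_{0,\al}(2R)^{\al}$: then $[a(x)]^{\de}\leqslant cR^{\al\de}$ uniformly on $B_{2R}$. Here I apply the Sobolev--Poincar\'e inequality at gradient exponent $p\de$, landing the zero-mean function $u-(u)_{B_{2R}}$ in $L^{(p\de)^{\ast}}$ with $(p\de)^{\ast}=np\de/(n-p\de)$ (or any finite exponent if $p\de\geqslant n$). The key embedding check is $(p\de)^{\ast}\geqslant q\de$, which rearranges to $n(q-p)\leqslant qp\de$; this follows from \eqref{pre_de} since $n(q-p)<\de_{0}\al p\leqslant\de_{0}p\leqslant qp\de$. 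This yields
\[
\fint_{B_{2R}}[a(x)]^{\de}\left|\frac{u-(u)_{B_{2R}}}{2R}\right|^{q\de}dx\leqslant cR^{\al\de}\left(\fint_{B_{2R}}|\na u|^{p\de}dx\right)^{q/p}.
\]
The final absorption step rewrites the super-linear right-hand side as the product $R^{\al\de-n(q-p)/p}\bigl(\int_{B_{2R}}|\na u|^{p\de}dx\bigr)^{(q-p)/p}\cdot\fint_{B_{2R}}|\na u|^{p\de}dx$ up to universal constants; raising \eqref{a2} to the power $q\de$ bounds the bracketed prefactor by $1$, leaving the clean $c\fint_{B_{2R}}|\na u|^{p\de}dx\leqslant c\fint_{B_{2R}}[H(x,|\na u|)]^{\de}dx$.

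The main obstacle is this last degenerate case: it is precisely the sharpness of \eqref{exp} (via \eqref{pre_de}) that makes the embedding $(p\de)^{\ast}\geqslant q\de$ available, and the scaling hypothesis \eqref{a2} on the ball $B_{2R}$ is engineered to convert the super-linear dependence on $|\na u|^{p\de}$ together with the $R^{\al\de}$ prefactor back into a plain $\fint|\na u|^{p\de}$. Once these two ingredients are combined, the remaining steps are routine Poincar\'e and subadditivity estimates, and the global constant depends only on the stated parameters since $\de$ ranges in a bounded interval $(\de_{0},1)$ fixed by $n,p,q,\al$.
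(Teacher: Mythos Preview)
Your proof is correct and follows essentially the same approach as the paper: both treat the $p$-part by the classical Poincar\'e inequality at exponent $p\de$, and for the $q$-part combine the H\"older regularity of $a(\cdot)$ with the Sobolev--Poincar\'e embedding $q\de\leqslant(p\de)^{*}$ and the scaling assumption \eqref{a2} to absorb the resulting $R^{\al\de}(\fint|\na u|^{p\de})^{q/p}$. The only cosmetic difference is that the paper writes $a(x)=a_{\min}+(a(x)-a_{\min})$ additively and handles both contributions simultaneously, whereas you split into the two regimes $a_{\min}\gtrless[a]_{0,\al}(2R)^{\al}$; this avoids your case distinction but uses exactly the same two estimates.
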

\begin{proof}
	Let $a(x_m) = \inf\limits_{x\in B_{2R}}a(x)$ for some point $x_{m}\in \overline{B_{2R}}$. Then we have 
	\begin{align}
		\label{poinca:1}
		\begin{split}
			\fint_{B_{2R}}\left[H\left(x,\left|\frac{u-(u)_{B_{2R}}}{2R}\right|\right)\right]^{\delta}\,dx
			&\leqslant
			2
			\fint_{B_{2R}}\left[H\left(x_{m},\left|\frac{u-(u)_{B_{2R}}}{2R}\right|\right)\right]^{\delta}\,dx
			\\&
			\quad
			+
			2\fint_{B_{2R}}|a(x)-a(x_m)|^{\delta}\left|\frac{u-(u)_{B_{2R}}}{2R}\right|^{q\delta}\,dx
			\\&
			=:2\left(I_{1} + I_{2}\right).
		\end{split}
	\end{align}
	Then applying the classical Poincar\'e inequality in \cite[Theorem 1.51]{MZ} and recalling $1<p\gamma<p\delta<p$ and $1<q\gamma<q\delta<q$ by \eqref{def_ga}, we find 
	\begin{align*}
		\begin{split}
			I_{1} 
			&\leqslant 2\fint_{B_{2R}}\left|\frac{u-(u)_{B_{2R}}}{2R}\right|^{p\delta}\,dx + 2 \fint_{B_{2R}}[a(x_m)]^{\delta}\left|\frac{u-(u)_{B_{2R}}}{2R}\right|^{q\delta}\,dx
			\\&
			\leqslant
			c\fint_{B_{2R}}\left[H\left(x_{m},|\na u|\right)\right]^{\delta}\,dx
		\end{split}
	\end{align*}
	for some constant $c\equiv c(n,N,p,q)$. For the estimate of the term $I_{2}$ in \eqref{poinca:1}, using the H\"older continuity of the coefficient function $a(\cdot)$ and classical Sobolev-Poincar\'e inequality in \cite[Corollary 1.64]{MZ} with observing that $\frac{q}{p} < 1+\frac{\alpha}{n} < \frac{n}{n-1}$ to have
	\begin{align*}
		I_{2} 
		&\leqslant 4[a]_{0,\alpha}^{\delta}R^{\alpha\delta}\fint_{B_{2R}}\left(\left|\frac{u-(u)_{B_{2R}}}{2R}\right|^{p\delta}\right)^{\frac{q}{p}}\,dx
		\\&
		\leqslant
		c [a]_{0,\alpha}^{\delta}R^{\alpha\delta}\left(\fint_{B_{2R}}|\na u|^{p\delta}\,dx\right)^{\frac{q}{p}}
		\\&
		\leqslant
		c\left([a]_{0,\alpha}+1\right)R^{\alpha\delta-n\left(\frac{q}{p}-1 \right)}\left( \int_{B_{2R}} |\na u|^{p\delta}\,dx \right)^{\frac{q}{p}-1}
		\fint_{B_{2R}}|\na u|^{p\delta}\,dx
		\\&
		\leqslant
		c \fint_{B_{2R}}|\na u|^{p\delta}\,dx
	\end{align*}
	for some constant $c\equiv c(n,N,p,q,[a]_{0,\alpha})$, where we have also used the assumption \eqref{a2}. Inserting the estimates in the last two displays into \eqref{poinca:1}, we arrive at the conclusion of Lemma \ref{poinca_lem}.
\end{proof}

Next, using the Poincar\'e type inequality of Lemma \ref{poinca_lem}, we derive an estimate for the function $G$ introduced in \eqref{def_g}.

\begin{lemma}\label{p_lem}
	Under the settings and notations of Subsection \ref{subsec3-1}, there exists a constant $c_{\gamma}\equiv c_{\gamma}(n,N,p,q, [a]_{0,\alpha})$ satisfying the following inequality
	\begin{equation*}
		\fint_{B_{2R}}[G(x)]^{\de p}\ dx\leqslant c_{\gamma}\La^{\de}.
	\end{equation*}
	for constants $\delta\in (\delta_0,1)$ and $\Lambda\geqslant 1$ satisfying the conditions \eqref{a2}-\eqref{ge_la}.
\end{lemma}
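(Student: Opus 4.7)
The plan is to unwind the definition of $G$ via the strong-type maximal inequality (Lemma \ref{p_p}), compare the resulting integrand pointwise to $\delta$-th powers of $H(x,\cdot)$, and then close the argument using the Poincar\'e-type estimate of Lemma \ref{poinca_lem} together with the hypothesis \eqref{le_la}.

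First I would rewrite
\[
[G(x)]^{\de p} = \left[M(G_p + G_q)(x)\right]^{\de/\ga},
\]
and check that the exponent $s := \de/\ga$ lies in a fixed compact subinterval of $(1,\infty)$. Indeed, by \eqref{def_ga}--\eqref{pre_de},
\[
1 < \frac{\tilde{\ga}}{\ga} = \frac{2p+1}{p+2} \leqslant \frac{\de_0}{\ga} \leqslant \frac{\de}{\ga} < \frac{1}{\ga} = \frac{3p}{p+2},
\]
so these bounds depend only on $p$. Hence Lemma \ref{p_p}, applied in the continuous-in-$s$ form noted immediately after its statement, yields a constant $c\equiv c(n,p)$ with
\[
\int_{\RR^n}\left[M(G_p+G_q)\right]^{\de/\ga}\,dx \leqslant c\int_{\RR^n}(G_p+G_q)^{\de/\ga}\,dx.
\]

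Next, since $\de/\ga>1$, the elementary convexity bound $(a_1+a_2+a_3)^{\de/\ga} \leqslant C(a_1^{\de/\ga}+a_2^{\de/\ga}+a_3^{\de/\ga})$ applied to the three-term sums defining $G_p$ and $G_q$ in \eqref{def_g_p}--\eqref{def_g_q}, combined with the identities $p\ga\cdot(\de/\ga) = p\de$, $q\ga\cdot(\de/\ga) = q\de$, and $[a(y)]^{\ga\cdot\de/\ga} = [a(y)]^{\de}$, gives the pointwise estimate
\[
(G_p+G_q)^{\de/\ga}(y) \leqslant c\left(\left[H\!\left(y,\left|\tfrac{u(y)-(u)_{B_{2R}}}{2R}\right|\right)\right]^{\de} + \left[H(y,|\na u(y)|)\right]^{\de} + \left[H(y,|F(y)|)\right]^{\de}\right)\rchi_{B_{2R}}(y).
\]

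Finally, integrating over $\RR^n$ and dividing by $|B_{2R}|$, the three resulting averages are controlled as follows: the $|\na u|$ and $|F|$ contributions are bounded by $\La^{\de}$ directly from \eqref{le_la}, using the trivial inequality $a^\de \leqslant (a+b)^\de$ for $a,b\geqslant 0$; and the term involving $u-(u)_{B_{2R}}$ is reduced to the $|\na u|$ term by Lemma \ref{poinca_lem} and again controlled by $\La^\de$ via \eqref{le_la}. Summing yields the desired inequality with a constant $c_\ga$ depending only on $n,N,p,q,[a]_{0,\al}$.

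I do not expect a serious obstacle here; the only bookkeeping point is ensuring the constant in the maximal inequality is uniform in $\de\in(\de_0,1)$, which is precisely the reason the function $G$ is built from the $(p\ga)$-th powers of its arguments rather than the $(p\de)$-th powers: the gap $\ga<\tilde\ga<\de_0$ keeps $\de/\ga$ bounded away from both $1$ and $\infty$ by constants depending only on $p$.
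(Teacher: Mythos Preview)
Your proposal is correct and follows essentially the same approach as the paper: apply the strong maximal inequality (Lemma~\ref{p_p}) with exponent $s=\de/\ga$, noting that this exponent stays in a compact subinterval of $(1,\infty)$ depending only on $p$, then reduce the resulting integrand to $[H(x,\cdot)]^\de$-terms and finish via Lemma~\ref{poinca_lem} and \eqref{le_la}. The only difference is that you spell out the pointwise convexity bound on $(G_p+G_q)^{\de/\ga}$ more explicitly than the paper does.
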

\begin{proof}
 Clearly, we have 
	\begin{equation*}
		\int_{B_{2R}}[G(x)]^{\de p}\ dx
		\leqslant \int_{\RR^n}\left[M\left(G_p+G_q\right)\right]^\frac{\de}{\ga}(x)\ dx,
	\end{equation*}
	where the functions $G_{p}$ and $G_{q}$ have been defined in \eqref{def_g_p}-\eqref{def_g_q}. Recalling that $1<\frac{\tilde{\ga}}{\ga}<\frac{\de}{\ga}<\frac{1}{\gamma}$ by \eqref{def_ga}-\eqref{pre_de}, we are able to apply Lemma~\ref{p_p} with $s\equiv \frac{\de}{\ga}$ to obtain
	\begin{equation*}
		\int_{B_{2R}}[G(x)]^{\de p}\ dx
		\leqslant c \int_{B_{2R}}\left([G_{p}(x)]^\frac{\delta}{\gamma}+[G_q(x)]^\frac{\delta}{\gamma}\right)\, dx
	\end{equation*}
	for a constant  $c\equiv c(n,p)$. Then, using the Poincar\'e type inequality of Lemma \ref{poinca_lem} and \eqref{le_la}, we have 
	\begin{equation*}
		\fint_{B_{2R}}[G(x)]^{\de p}\ dx
		\leqslant c \fint_{B_{2R}}\left[H(x,|\na u|)+H(x,|F|)\right]^\de\ dx\leqslant c\La^\de
	\end{equation*}
	for some constant $c\equiv c(n,N,p,q, [a]_{0,\alpha})$.
\end{proof}

Next we discuss some important properties of the truncated function $v_\la$ defined in \eqref{lip_trunc}.
\begin{lemma}\label{lip1}
	Under the settings and notations of Subsection \ref{subsec3-1}, there exist constants $c_{p}\equiv c_{p}(n,p)$ and $c_{q}\equiv c_{q}(n,N,p,q,\alpha, [a]_{0,\alpha})$ such that the following inequalities hold true:
	\begin{align}
		\label{lip1:1}
		\begin{split}
			&|v_\la(y)|\leqslant c_{p}R\la^\frac{1}{p}\text{ for all }y\in E(\la^{1/p})^c
			\\&
			\text{and}
			\\&
			[a(y)]^{\frac{1}{q}}|v_{\lambda}(y)| \leqslant c_{q}R \lambda^{\frac{1}{q}}\text{ for all }y\in B_{2R}\cap E(\la^{1/p})^c,
		\end{split}
	\end{align}
	whenever $\lambda\geqslant 1$ is a number satisfying \eqref{lambda}.
\end{lemma}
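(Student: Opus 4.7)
The plan is to exploit the Whitney covering structure together with the maximal-function control encoded in the sets $E(\la^{1/p})$. Fix $y\in E(\la^{1/p})^c$. By \hyperref[w1]{W1} there exists $i_0$ with $y\in\tfrac{1}{2}B_{i_0}$, and by \hyperref[p3]{P3} (the identity $\sum_{j\in A_{i_0}}\psi_j\equiv 1$ on $B_{i_0}$) the definition \eqref{lip_trunc} collapses to
\[
v_\la(y) \;=\; \sum_{j\in A_{i_0}}\psi_j(y)\,v_j.
\]
By \hyperref[w8]{W8} this sum has at most $c(n)$ nonzero terms, so it suffices to estimate $|v_j|$ and $[a(y)]^{1/q}|v_j|$ for each fixed $j\in A_{i_0}$. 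If $\tfrac{3}{4}B_j\not\subset B_{2R}$ then $v_j=0$ by \eqref{v_i}, so we may assume $\tfrac{3}{4}B_j\subset B_{2R}$; this forces $r_j\leq cR$ and hence, via \hyperref[w3]{W3}, $r_{i_0}\leq cR$, which together with the standing restriction $R\leq 1$ keeps both radii bounded by a universal constant.

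Next, by \hyperref[w2]{W2} there is a point $z\in 16B_{i_0}\cap E(\la^{1/p})$, so $M(G_p+G_q)(z)\leq\la^\gamma$ by \eqref{def_g}. Since \hyperref[w3]{W3} gives $r_j\simeq r_{i_0}$, both $\tfrac{3}{4}B_j$ and $z$ lie inside a common ball of radius $\lesssim r_{i_0}$ and measure comparable to $|B_j|$, whence
\[
\fint_{\frac{3}{4}B_j}(G_p+G_q)\,dx \;\leq\; c\,M(G_p+G_q)(z)\;\leq\; c\,\la^\gamma.
\]
Because $v=(u-(u)_{B_{2R}})\eta$ with $|\eta|\leq c(n)$ and $\spt v\subset B_{2R}$, we have the pointwise bounds $|v/R|^{p\gamma}\leq c\,G_p$ and $[a(x)]^\gamma|v/R|^{q\gamma}\leq c\,G_q(x)$. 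The first one, combined with Jensen's inequality (valid since $p\gamma>1$), yields
\[
|v_j|/R \;\leq\; \Bigl(\fint_{\frac{3}{4}B_j}|v/R|^{p\gamma}\,dx\Bigr)^{1/(p\gamma)} \;\leq\; c\,\la^{1/p},
\]
which gives the first bound in \eqref{lip1:1}.

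For the second bound the $G_q$-estimate delivers $[a(x)]^\gamma$ rather than the required $[a(y)]^\gamma$, so one invokes the $C^{0,\alpha}$-regularity of $a$ to write $a(y)\leq a(x)+c\,[a]_{0,\alpha}\,r_{i_0}^\alpha$ for $x\in\tfrac{3}{4}B_j$. Using $(a+b)^\gamma\leq a^\gamma+b^\gamma$ for $\gamma\in(0,1)$ together with Jensen at exponent $q\gamma\geq 1$ leads to
\[
\bigl([a(y)]^{1/q}|v_j|/R\bigr)^{q\gamma} \;\leq\; c\fint_{\frac{3}{4}B_j}[a(x)]^\gamma|v/R|^{q\gamma}\,dx \;+\; c\,r_{i_0}^{\alpha\gamma}\fint_{\frac{3}{4}B_j}|v/R|^{q\gamma}\,dx.
\]
The first summand is at most $c\,\la^\gamma$ by the $G_q$ bound above. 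The main obstacle is the residual term $r_{i_0}^{\alpha\gamma}\fint|v/R|^{q\gamma}\,dx$: the integrand is not dominated by either $G_p$ or $G_q$, and the crude estimate $|v/R|^{q\gamma}\leq c\,G_p^{q/p}$ alone only gives $\fint|v/R|^{q\gamma}\leq c\,\la^{\gamma q/p}$, which is too large by a factor $\la^{\gamma(q-p)/p}$. The absorption is carried out by combining the Whitney bound $r_{i_0}\leq cR$ with the scaling hypothesis \eqref{a2} and a Sobolev--Poincar\'e argument applied to $v/R$; the strict inequality encoded in \eqref{pre_de}--\eqref{pre_a2} is precisely what allows the exponents to balance and the correction to be swallowed into $c\,\la^\gamma$. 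Taking the $(q\gamma)^{\text{th}}$ root and summing over the at most $c(n)$ indices $j\in A_{i_0}$ then yields the second bound in \eqref{lip1:1}.
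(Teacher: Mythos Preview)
Your argument for the first inequality in \eqref{lip1:1} is correct and essentially coincides with the paper's. For the second inequality, the reduction via $a(y)\le a(x)+c[a]_{0,\alpha}r_{i_0}^\alpha$ and the control of the $G_q$-weighted part are also fine. The gap is in the residual term
\[
r_{i_0}^{\alpha\gamma}\fint_{\frac34 B_j}\Bigl|\frac{v}{R}\Bigr|^{q\gamma}\,dx,
\]
which you do not actually estimate. Two points: (i) the intermediate claim ``$\fint|v/R|^{q\gamma}\le c\fint G_p^{q/p}\le c\la^{\gamma q/p}$'' is not valid, since the maximal function only bounds $\fint G_p$ and Jensen goes the wrong way for $q/p>1$; (ii) more seriously, after applying Jensen at exponent $q\gamma$ you are left needing an $L^{q\gamma}$-average of $|v/R|$, but the only available controls are $L^{p\gamma}$ (maximal function) and $L^{p\delta}$ (a priori assumption). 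For small $p$ one may have $q\gamma>p\delta$, so interpolation between $p\gamma$ and $p\delta$ fails, and going up to $(p\delta)^*$ via Sobolev on $B_{2R}$ produces exponents that do not match the scaling in \eqref{a2}. Your sketch (``Sobolev--Poincar\'e together with \eqref{a2}'') does not indicate how the exponents close.

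The paper sidesteps this by \emph{not} raising to the $q\gamma$-th power. It bounds $\lVert a\rVert_{L^\infty(B_{2R}\cap B_i)}^{1/q}|v_j|$ directly, so the residual is $r_j^{\alpha/q}\fint_{\frac34 B_j}|u-(u)_{B_{2R}}|\,dx$, an $L^1$-average of $f:=|(u-(u)_{B_{2R}})/(2R)|$. Then one uses the elementary inequality
\[
\fint_{\frac34 B_j} f\,dx \;\le\; \Bigl(\fint_{\frac34 B_j} f^{\delta p}\,dx\Bigr)^{\frac{1}{\delta p}\bigl(1-\frac{p}{q}\bigr)}
\Bigl(\fint_{\frac34 B_j} f^{\gamma p}\,dx\Bigr)^{\frac{1}{\gamma p}\cdot\frac{p}{q}},
\]
which holds simply because $\lVert f\rVert_1\le\lVert f\rVert_{\gamma p}\le\lVert f\rVert_{\delta p}$. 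The $L^{\gamma p}$-factor yields $\la^{1/q}$ via the maximal function, while the $L^{\delta p}$-factor, after enlarging to $B_{2R}$ and applying Poincar\'e, combines with $r_j^{\alpha/q}\le cR^{\alpha/q}$ to become exactly the left side of \eqref{a2}. Working at exponent $1$ rather than $q\gamma$ is what makes the exponents of \eqref{a2} appear; your Jensen step at $q\gamma$ destroys this structure.
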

\begin{proof}
	Let $y\in E(\la^{1/p})^c$ be any fixed point and $\lambda\geqslant 1$ be any fixed number as defined in \eqref{lambda}. Then, the condition \ref{w1} yields that $y\in B_i$ for some $i\in \mathbb{N}$. Using \ref{p3}, we have
	\begin{align*}
		\begin{split}
			|v_\la(y)|\leqslant \sum_{j\in A_i}|v_j|
			&=\sum_{j\in A_i}\fint_{\frac{3}{4}B_j}|u-(u)_{B_{2R}}|\eta\ dx
			\\&\leqslant c(n)\sum_{j\in A_i}\fint_{16 B_j}|u-(u)_{B_{2R}}|\rchi_{B_{2R}}\ dx,
		\end{split}
	\end{align*}
	where the summation is taken over the indices $j\in A_i$ with $\frac{3}{4}B_j\subset B_{2R}$ and the set $A_{i}$ has been defined in \eqref{set_A_i}. The last display together with applying H\"older's inequality and observing that there exists a point $y_{j}\in E(\lambda^{1/p})\cap 16B_{j}$ for every $j\in \mathbb{N}$ via \ref{w2} yield
	\begin{align}
		\label{lip1:2}
		\begin{split}
			|v_{\lambda}(y)| 
			&\leqslant
			c(n)\sum\limits_{j\in A_{i}} \left( \fint_{16B_{j}}|u-(u)_{B_{2R}}|^{p\gamma}\rchi_{B_{2R}}\,dx \right)^{\frac{1}{p\gamma}}
			\\&
			\leqslant
			c(n)R\sum\limits_{j\in A_{i}} \left[ M\left(\left|\frac{u-(u)_{B_{2R}}}{2R} \right|^{p\gamma}\rchi_{B_{2R}} \right)(y_{j}) \right]^{\frac{1}{p\gamma}}
			\\&
			\leqslant
			c(n)R \sum\limits_{j\in A_{i}} G(y_{j})
			\leqslant
			c(n)R\lambda^{\frac{1}{p}}\sum\limits_{j\in A_{i}} 1
			\leqslant
			c(n)R\lambda^{\frac{1}{p}},
		\end{split}
	\end{align}
	where we have also used the definition of the function $G$ in \eqref{def_g} and \ref{w8}. So the first part of \eqref{lip1:1} is proved. Now we turn our attention to the second inequality of \eqref{lip1:1}. In fact, it suffices to show that there exists a constant $c\equiv c(n,N,p,q,\alpha,[a]_{0,\alpha})$ such that
	\begin{align}
		\label{lip1:3}
		\lVert a\rVert_{L^\infty(B_{2R}\cap B_i)}^\frac{1}{q}|v_j|\leqslant cR\la^\frac{1}{q}
	\end{align}
	holds for every $j\in A_i$, where $A_i$ and $v_{j}$ hava been defined in \eqref{set_A_i} and \eqref{v_i}, respectively. We may assume that $\frac{3}{4}B_{j}\subset B_{2R}$. Otherwise, the inequality \eqref{lip1:3} is trivial by the definition $v_{j}$ in \eqref{v_i}. Then, using \ref{w3} and H\"older continuity of the coefficient function $a(\cdot)$, we find 
	\begin{equation*}
		\lVert a\rVert_{L^\infty (B_{2R}\cap B_i)}\leqslant \inf_{x\in  B_{2R}\cap B_j}a(x)+8[a]_{0,\alpha}r_j^\alpha\text{ for all }j\in A_i.
	\end{equation*}
	Therefore, using the last display and recalling the definition of $v_{j}$ in \eqref{v_i}, we have
	\begin{align}
		\label{lip1:4}
		\begin{split}
			\lVert a\rVert_{L^\infty(B_{2R}\cap B_i)}^\frac{1}{q}|v_j(z)|
			\leqslant &c\fint_{\frac{3}{4} B_j}\left[\inf_{x\in  B_{2R}\cap B_j}a(x)\right]^\frac{1}{q}|u-(u)_{B_{2R}}|\ dx
			\\&+cr_j^\frac{\alpha}{q}\fint_{ \frac{3}{4}B_j}|u-(u)_{B_{2R}}|\ dx
			=: c\left( I_{1} + I_{2}\right)
		\end{split}
	\end{align}
	for some constant $c\equiv c(n,q,\alpha,[a]_{0,\alpha})$. Now we shall estimate the resulting terms in the last display. Applying H\"older's inequality together with the observation $1<\ga p < \ga q$, we see
	\begin{align}
		\label{lip1:5}
		\begin{split}
			I_{1}
			&\leqslant cR\left(\fint_{\frac{3}{4} B_j}\left(a(x)\left|\frac{u-(u)_{B_{2R}}}{2R}\right|^{q}\right)^\gamma\ dx\right)^\frac{1}{\ga q}
			\\&\leqslant
			cR \left(\fint_{16B_{j}}\left( a(x)\left|\frac{u-(u)_{B_{2R}}}{2R} \right|^q \right)^{\gamma}\rchi_{B_{2R}}\,dx \right)^{\frac{1}{\gamma q}}
			\\&
			\leqslant
			cR \left[M\left( \left(a(x)\left|\frac{u-(u)_{B_{2R}}}{2R} \right|^{q} \right)^{\gamma}\rchi_{B_{2R}}  \right)(y_{j})\right]^{\frac{1}{\gamma q}}
			\\&
			\leqslant
			cR \left[G(y_{j}) \right]^{\frac{p}{q}}\leqslant
			cR \lambda^{\frac{1}{q}}
		\end{split}
	\end{align}
	for some constant $c\equiv c(n,p,q)$, where $y_{j}\in E(\lambda^{1/p})\cap 16B_{j}$ is a point determined by \ref{w2}. Again applying H\"older's inequality repeatedly together with recalling that 
	\begin{align*}
		1<p\gamma < p\delta<p,\quad \frac{\alpha}{q}-\frac{n}{\delta p}\left(1-\frac{p}{q}\right)<1
		\quad\text{and}\quad
		\frac{1}{\de p}\left(1-\frac{p}{q}\right)<1,
	\end{align*}
 we estimate the term $I_{2}$ in \eqref{lip1:4} as
	\begin{align*}
		\begin{split}
			I_{2} 
			&\leqslant
			2R r_j^\frac{\alpha}{q}\left(\fint_{\frac{3}{4} B_j}\left|\frac{u-(u)_{B_{2R}}}{2R}\right|^{\de p}\ dx\right)^{\frac{1}{\de p}\left(1-\frac{p}{q}\right)}\left(\fint_{\frac{3}{4} B_j}\left|\frac{u-(u)_{B_{2R}}}{2R}\right|^{\ga p}\ dx\right)^{\frac{1}{\gamma p}\frac{p}{q}}
			\\&
			\leqslant
			cR r_j^{\frac{\alpha}{q}-\frac{n}{\de p}\left(1-\frac{p}{q}\right)}\left(\int_{B_{2R}}\left|\frac{u-(u)_{B_{2R}}}{2R}\right|^{\de p}\ dx\right)^{\frac{1}{\de p}\left(1-\frac{p}{q}\right)}
			\\&
			\qquad\times \left(\fint_{16 B_j}\left|\frac{u-(u)_{B_{2R}}}{2R}\right|^{\gamma p}\rchi_{B_{2R}}\ dx\right)^{\frac{1}{\gamma p}\frac{p}{q}}
			\\&
			\leqslant
			cR^{1+\frac{\alpha}{q}-\frac{n}{\de p}\left(1-\frac{p}{q}\right)}\left(\int_{\Omega}\left|\nabla u\right|^{\de p}\ dx\right)^{\frac{1}{\de p}\left(1-\frac{p}{q}\right)}\left(\fint_{16 B_j}\left|\frac{u-(u)_{B_{2R}}}{2R}\right|^{\gamma p}\rchi_{B_{2R}}\ dx\right)^{\frac{1}{\gamma p}\frac{p}{q}}
			\\&
			\leqslant
			cR  \left(\fint_{16 B_j}\left|\frac{u-(u)_{B_{2R}}}{2R}\right|^{\gamma p}\rchi_{B_{2R}}\ dx\right)^{\frac{1}{\gamma p}\frac{p}{q}}
		\end{split}
	\end{align*}
	for some constant $c\equiv c(n,N,p)$, where we have also used a classical Poincar\'e inequality and the fact that $\frac{3}{4} B_{j}\subset B_{2R}$. Using \eqref{lip1:2} in the last display, we conclude
	\begin{align}
		\label{lip1:6}
		I_{2} \leqslant cR\lambda^{\frac{1}{q}}
	\end{align}
	for a constant $c\equiv c(n,N,p)$. Finally, we combine the estimates \eqref{lip1:5}-\eqref{lip1:6} in \eqref{lip1:4} to obtain the second inequality of \eqref{lip1:1}. The proof is completed.
\end{proof}

\begin{lemma}\label{lip2}
	Under the settings and notations of Subsection \ref{subsec3-1}, there exists $c\equiv c(n,N,p)$ such that
	\begin{align}
		\label{lip2:1}
		\fint_{B_i}|v-v_i|\ dx\leqslant c\min\{r_i,R\} \la^\frac{1}{p}
		\text{ for every } i\in\mathbb{N}
	\end{align}
	and
	\begin{align}
		\label{lip2:2}
		|v_i-v_j| \leqslant c\min\{r_i,R\}\lambda^{\frac{1}{p}}
		\text{ for every } j\in A_{i} \text{ and } i\in \mathbb{N},
	\end{align}
	where the set $A_{i}$ has been defined in \eqref{set_A_i}.
\end{lemma}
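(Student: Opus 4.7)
The strategy for both inequalities is the same: reduce $\fint_{B'}|v-(v)_{B'}|\,dx$ on a ball $B'$ comparable to $B_i$ via a Poincar\'e inequality to an $L^{p\gamma}$-norm of $\nabla v$, then use the pointwise domination $|\nabla v|^{p\gamma}\rchi_{B_{2R}}\leqslant c\,G_{p}$ (which follows from $|\nabla v|\leqslant |\nabla u|+cR^{-1}|u-(u)_{B_{2R}}|$ on $\spt\eta$ and the definition \eqref{def_g_p}) to bound $\fint_{B'}|\nabla v|^{p\gamma}\,dx$ by $\fint_{16B_i}(G_{p}+G_{q})\,dx\leqslant M(G_{p}+G_{q})(y_i)\leqslant \lambda^{\gamma}$, where $y_i\in 16B_i\cap E(\lambda^{1/p})$ is the point supplied by \ref{w2}. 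Taking the $1/(p\gamma)$-th root converts $\lambda^\gamma$ to the desired $\lambda^{1/p}$ scale.

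For \eqref{lip2:1} I split according to whether $\frac{3}{4}B_i\subset B_{2R}$. In the inclusion case, $v_i=(v)_{\frac{3}{4}B_i}$ and necessarily $r_i\leqslant \frac{8}{3}R$, so applying the standard Poincar\'e inequality on $B_i$ (once for $\fint_{B_i}|v-(v)_{B_i}|\,dx$ and once for $|(v)_{B_i}-v_i|\leqslant c(n)\fint_{B_i}|v-(v)_{B_i}|\,dx$) combined with the reduction above yields $\fint_{B_i}|v-v_i|\,dx\leqslant cr_i\lambda^{1/p}$, which coincides with $c\min\{r_i,R\}\lambda^{1/p}$ here. Otherwise $v_i=0$ and $B_i\cap B_{2R}^c\neq\emptyset$, so Lemma \ref{boundary_poincare} applied to $v\in W^{1,p\gamma}_{0}(B_{2R})$ with the ball $B_{r_i}(x_i)=B_i$ yields the same bound $cr_i\lambda^{1/p}$. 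When additionally $r_i>R$, the sharper bound $cR\lambda^{1/p}$ is recovered by exploiting $\spt v\subset B_{2R}$:
\[
\fint_{B_i}|v|\,dx\leqslant \frac{|B_{2R}|}{|B_i|}\fint_{B_{2R}}|u-(u)_{B_{2R}}|\,dx\leqslant cR\Lambda^{1/p}\leqslant cR\lambda^{1/p},
\]
using classical Poincar\'e on $B_{2R}$, Jensen's inequality applied to \eqref{le_la} (which gives $\fint_{B_{2R}}|\nabla u|^{p\gamma}\,dx\leqslant \Lambda^{\gamma}$), and the size condition \eqref{lambda}.

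For \eqref{lip2:2} I would use \ref{w3} to observe $r_i\sim r_j$, together with \ref{w6} and property W7, to fix a ball $\tilde{B}\supset \tfrac{3}{4}B_i\cup\tfrac{3}{4}B_j$ of radius $\sim r_i$ whose volume is comparable to $|\tfrac{3}{4}B_i\cap\tfrac{3}{4}B_j|$. When both $\tfrac{3}{4}B_i,\tfrac{3}{4}B_j\subset B_{2R}$, the chain $|v_i-v_j|\leqslant c\fint_{\tilde{B}}|v-(v)_{\tilde{B}}|\,dx$ together with the main reduction delivers the bound by exactly the same dichotomy used for \eqref{lip2:1}. If exactly one of the two three-quarter balls fails to be contained in $B_{2R}$, the corresponding $v$-average vanishes and the difference reduces to a single average already controlled as in \eqref{lip2:1} (with the boundary Poincar\'e applied to $\tilde{B}$, which intersects $B_{2R}^c$); if both fail, then $v_i=v_j=0$ and the claim is trivial.

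The main technical point I anticipate is the verification that $v=(u-(u)_{B_{2R}})\eta\in W^{1,p\gamma}_0(B_{2R})$, which is needed to legitimately invoke Lemma \ref{boundary_poincare} under the very weak integrability assumption. This follows from $\nabla u\in L^{p\delta}(\Omega)$ with $p\gamma<p\delta$ (by \eqref{le_la}) and a Sobolev-Poincar\'e bound for $u-(u)_{B_{2R}}$ of the same type already exploited in Lemma \ref{poinca_lem}; once this is in place, the remainder consists of routine Whitney-covering manipulations, with all constants depending only on $n,N,p$.
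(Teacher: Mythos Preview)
Your argument for \eqref{lip2:1} follows essentially the same route as the paper: Poincar\'e (interior or boundary, depending on position relative to $B_{2R}$) followed by the maximal-function bound via a point of $16B_i\cap E(\lambda^{1/p})$. Two remarks. First, in the large-radius case $r_i>R$ you bound $\fint_{B_{2R}}|u-(u)_{B_{2R}}|$ through \eqref{le_la} and \eqref{lambda}, which introduces a factor $c_\gamma^{-1}$ and hence a dependence on $q,[a]_{0,\alpha}$, contradicting the stated $c\equiv c(n,N,p)$; the paper instead uses directly that $B_{2R}\cap E(\lambda^{1/p})\neq\emptyset$ (item 7 of the setup), picking a point $y$ there and bounding by $G(y)\leqslant\lambda^{1/p}$, which keeps the constant dimensional. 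Second, for \eqref{lip2:2} your chain through an auxiliary ball $\tilde B$ and the ensuing case analysis is correct but unnecessarily elaborate: the paper simply writes
\[
|v_i-v_j|\leqslant c\fint_{B_i\cap B_j}|v-v_i|\,dx+c\fint_{B_i\cap B_j}|v-v_j|\,dx\leqslant c\fint_{B_i}|v-v_i|\,dx+c\fint_{B_j}|v-v_j|\,dx
\]
using \ref{w6}, and then invokes \eqref{lip2:1} together with \ref{w3}. This two-line reduction avoids all the subcases you introduce.
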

\begin{proof}
	Let us fix any index $i\in \mathbb{N}$. First, we focus on proving \eqref{lip2:1}. For this, we consider several cases depending on a position of the balls $\frac{3}{4}B_{i}$ and $B_{2R}$. 
	
	\textbf{Case 1: $\frac{3}{4}B_i\subset B_{2R}$ and $B_{i}\subset B_{2R}$.} By the triangle inequality and the Poincar\'e inequality, we see
	\begin{align}
		\label{lip2:3}
		\begin{split}
			\fint_{B_i}|v-v_i|\ dx
			&\leqslant
			c\fint_{B_i}|v-(v)_{B_i}|\ dx
			\\&
			\leqslant
			cr_{i}\fint_{B_{i}}|\na v|\,dx
			\\&
			\leqslant
			cr_i\fint_{B_{i}}\left(|\na u| + \left|\frac{u-(u)_{B_{2R}}}{2R} \right|\right)\,dx
			\\&
			\leqslant
			cr_i\left(\fint_{16B_i}\left(|\na u|^{\ga p}+\left|\frac{u-(u)_{B_{2R}}}{2R}\right|^{\ga p}\right)\rchi_{B_{2R}}\ dx\right)^\frac{1}{\ga p}
			\\&
			\leqslant cr_iG(y_i)\leqslant cr_i\la^\frac{1}{p}
		\end{split}
	\end{align}
	for some constant $c\equiv c(n,N)$, where we have applied H\"older's inequality with the exponent $1<\gamma p$ and the definition of the function $G$ introduced in \eqref{def_g} together with \ref{w2} in such a way that there exists a point $y_i\in E(\lambda^{1/p})^{c}\cap 16B_{i}$.
	
	\textbf{Case 2: $\frac{3}{4}B_{i}\subset B_{2R}$ and $B_{i}\cap B_{2R}^{c}\neq \emptyset$.} In this case, we apply the Poincar\'e inequality of Lemma \ref{boundary_poincare} together with recalling $1<\gamma p$ to have
	
	\begin{align}
		\label{lip2:4}
		\begin{split}
			\fint_{B_i}|v-v_i|\ dx
			&\leqslant c\fint_{4B_i}|v|\ dx\leqslant c r_i\left(\fint_{4B_i}|\na v|^{\gamma p}\ dx\right)^\frac{1}{\gamma p}
			\\&\leqslant c r_i\left(\fint_{16B_i}|\na v|^{\gamma p}\ dx\right)^\frac{1}{\gamma p}
			\\&
			\leqslant
			cr_{i}\left( \fint_{16B_{i}}\left(|\na u|^{\gamma p} + \left|\frac{u-(u)_{B_{2R}}}{2R} \right|^{\gamma p}\right)\rchi_{B_{2R}}\,dx\right)^{\frac{1}{\gamma p}}
		\end{split}
	\end{align}
	for some constant $c\equiv c(n,N,p)$. Arguing similarly as in \eqref{lip2:3}, we arrive at \eqref{lip2:1} in this case. 
	
	\textbf{Case 3: $\frac{3}{4}B_{i}\cap B_{2R}^{c}\neq \emptyset$ and $r_i\leqslant R$.} Again applying Lemma \ref{boundary_poincare} and arguing similarly as in \eqref{lip2:4}, we find 
	\begin{align*}
		\fint_{B_i}|v-v_i|\ dx=\fint_{B_i}|v|\ dx\leqslant cr_i\left(\fint_{4B_i}\left(|\na u|^{\ga p}+\left|\frac{u-(u)_{B_{2R}}}{2R}\right|^{\ga p}\right)\rchi_{B_{2R}} dx\right)^\frac{1}{\ga p}.
	\end{align*}
	Then \eqref{lip2:1} follows from the last display together with the argument in \eqref{lip2:3}.
	
	\textbf{Case 4: $\frac{3}{4}B_{i}\cap B_{2R}^{c}\neq \emptyset$ and $r_i>R$.} Recalling $B_{2R}\cap E(\lambda^{1/p})\neq \emptyset$ and $v\equiv 0$ on $B_{2R}^c$ by the definition of the function $v$ in \eqref{func_v}, we have 
	\begin{align*}
		\begin{split}
			\fint_{B_i}|v-v_i|\ dx=\fint_{B_i}|v|\ dx
			&\leqslant c
			\fint_{B_{2R}}|v|\ dx
			\\&\leqslant cR\fint_{B_{2R}}\left|\frac{u-(u)_{B_{2R}}}{2R}\right|\rchi_{B_{2R}}\,dx\leqslant cR G(y)
		\end{split}
	\end{align*}
	for some constant $c\equiv c(n)$ and a point $y\in B_{2R}\cap E(\lambda^{1/p})$. Taking into account all the cases we have discussed above, the estimate \eqref{lip2:1} is proved. Now we turn our attention to proving \eqref{lip2:2}. For every $i,j\in\mathbb{N}$ with $j\in A_{i}$, we observe that
	\begin{align*}
		\begin{split}
			|v_i-v_j|
			&\leqslant c\fint_{B_i\cap B_j}|v_i-v(x)|\ dx+c\fint_{B_i\cap B_j}|v_j-v(x)|\ dx
			\\&
			\leqslant
			c \fint_{B_i}|v-v_i|\ dx+c \fint_{B_j}|v-v_j|\ dx
		\end{split}
	\end{align*}
	for some constant $c\equiv c(n)$, where we have also applied \ref{w6}. Finally, applying \eqref{lip2:1} on the resulting term of the last display together with recalling $j\in A_{i}$, we arrive at the desired estimate \eqref{lip2:2}. The proof is completed.
\end{proof}

\begin{lemma}\label{lip4}
	Under the settings and notations of Subsection \ref{subsec3-1}, there exist universal constants $c_{lp}\equiv c_{lp}(n,N,p)$ and $c_{lq}\equiv c_{lq}(n,N,p,q,\alpha,[a]_{0,\alpha})$ such that
	\begin{align}
		\label{lip4:1}
		\begin{split}
			&|\na v_\la(y)|\leqslant c_{lp}\la^\frac{1}{p}\text{ for all }y\in E(\lambda^{1/p})^{c}
			\\&
			\quad\text{ and }			
			\\&
			[a(y)]^\frac{1}{q}|\na v_\la(y)|\leqslant c_{lq}\la^\frac{1}{q}\text{ for all }y\in B_{2R}\cap E(\lambda^{1/p})^{c}.
		\end{split}
	\end{align}
\end{lemma}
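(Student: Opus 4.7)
The starting point is an explicit representation of $\na v_\la$ on each Whitney ball $B_i$. Differentiating the definition \eqref{lip_trunc}, and using $\sum_{j\in A_i}\psi_j\equiv 1$ on $B_i$ from \ref{p3} (so also $\sum_{j\in A_i}\na\psi_j\equiv 0$ there), for every $y\in B_i$ one obtains
\[
\na v_\la(y)=-\sum_{j\in A_i}\na\psi_j(y)\bigl(v(y)-v_j\bigr)=\sum_{j\in A_i}\na\psi_j(y)\bigl(v_j-v_i\bigr),
\]
the second equality following because the zero quantity $\bigl(v(y)-v_i\bigr)\sum_{j\in A_i}\na\psi_j(y)$ telescopes $v(y)-v_j$ into $v_i-v_j$. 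In particular $\na v_\la(y)$ depends only on the differences $v_j-v_i$, which are already controlled by Lemma \ref{lip2}.

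\textbf{First estimate.} Fix $y\in E(\la^{1/p})^c$; by \ref{w1} there exists $i\in\mathbb{N}$ with $y\in\tfrac12 B_i$. Combining the identity above with \ref{p2} ($|\na\psi_j|\le c/r_j$), the comparability $r_j\approx r_i$ for $j\in A_i$ from \ref{w3}, the finite overlap $\#A_i\le c(n)$ from \ref{w8}, and Lemma \ref{lip2} yields
\[
|\na v_\la(y)|\le \frac{c(n)}{r_i}\sum_{j\in A_i}|v_j-v_i|\le \frac{c(n)}{r_i}\cdot c(n,N,p)\min\{r_i,R\}\la^{1/p}\le c_{lp}\la^{1/p},
\]
which is the first part of \eqref{lip4:1}.

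\textbf{Second estimate.} For $y\in B_{2R}\cap E(\la^{1/p})^c$, again $y\in \tfrac12 B_i\subset B_{2R}\cap B_i$, so $[a(y)]^{1/q}\le \|a\|_{L^\infty(B_{2R}\cap B_i)}^{1/q}$. By the same identity as above, the problem reduces to establishing the weighted analogue of Lemma \ref{lip2},
\[
\|a\|_{L^\infty(B_{2R}\cap B_i)}^{1/q}|v_j-v_i|\le c\min\{r_i,R\}\la^{1/q}\quad\text{for every }j\in A_i.
\]
The proof essentially repeats the scheme of \eqref{lip1:4}--\eqref{lip1:6}. Using \ref{w3} and the H\"older continuity of $a$ we split
\[
\|a\|_{L^\infty(B_{2R}\cap B_i)}^{1/q}\le\Bigl(\inf_{x\in B_{2R}\cap B_j}a(x)\Bigr)^{1/q}+c\,r_j^{\al/q},
\]
and then estimate $\fint_{B_i}|v-v_i|$ and $\fint_{B_j}|v-v_j|$ with each of these two weights: the infimum-weighted piece is absorbed into the maximal-function term stemming from $G_q$ exactly as in \eqref{lip1:5}, while the $r_j^{\al/q}$-piece is handled by a H\"older splitting with exponents $\de p$ and $\ga p$, a Poincar\'e inequality (or Lemma \ref{boundary_poincare} in the boundary cases), and the smallness assumption \eqref{a2}, producing the correct factor $r_j\la^{1/q}$. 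The four positional sub-cases of $B_i$ relative to $B_{2R}$ are then handled exactly as in the proof of Lemma \ref{lip2}, and the triangle inequality $|v_j-v_i|\le c\bigl(\fint_{B_i}|v-v_i|+\fint_{B_j}|v-v_j|\bigr)$ (using \ref{w6}) completes the argument.

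\textbf{Main obstacle.} The genuine difficulty lies exclusively in the second estimate: one must propagate the weight $[a(y)]^{1/q}$ through an oscillation bound for $v$ while retaining the correct scaling $\la^{1/q}$ rather than $\la^{1/p}$. This is precisely the reason we had to single out \eqref{a2} in the basic setting, and why the maximal function $G$ incorporates both pieces $G_p$ and $G_q$; once the weighted version of Lemma \ref{lip2} is in place, passing from it to the pointwise gradient bound is immediate via the partition-of-unity identity for $\na v_\la$.
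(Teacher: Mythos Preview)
Your proposal is correct and follows essentially the same approach as the paper: the partition-of-unity identity $\na v_\la=\sum_{j\in A_i}(v_j-v_i)\na\psi_j$ is used for both estimates, the first bound follows directly from Lemma~\ref{lip2}, and the second is reduced to the weighted oscillation inequality $\|a\|_{L^\infty(B_{2R}\cap B_i)}^{1/q}\fint_{B_i}|v-v_i|\le c\min\{r_i,R\}\la^{1/q}$, which is then proved case-by-case via the H\"older splitting of $a$, the $G_q$-maximal term, and the smallness assumption \eqref{a2}. The only cosmetic difference is that the paper formulates the key intermediate claim as a bound on $\|a\|^{1/q}\fint_{B_i}|v-v_i|$ rather than on $\|a\|^{1/q}|v_j-v_i|$, but the two are equivalent via \ref{w6} exactly as you indicate.
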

\begin{proof}
	First, we fix any point $y\in E(\lambda^{1/p})^{c}$. Then, there exists an index $i\in\mathbb{N}$ with $y\in \frac{3}{4}B_{i}$ via \ref{w1}. Therefore, it follows from \ref{p3} that
	\begin{equation*}
		0=\na\left(\sum_{j\in A_i}\psi_j\right)=\sum_{j\in A_i}\na \psi_j\text{ in }B_i.
	\end{equation*}
	Using this one and recalling the definition of the truncated function $v_{\lambda}$ in \eqref{lip_trunc}, we see
	\begin{equation*}
		\na v_\la(y)=\sum_{j\in A_i}v_j \na \psi_j(y)=\sum_{j\in A_i}(v_j -v_i)\na \psi_j(y),
	\end{equation*}
	which implies
	\begin{equation*}
		|\na v_\la(y)|\leqslant \sum_{j\in A_i}|v_j-v_i||\na \psi_j(y)|.
	\end{equation*}
	Then applying Lemma~\ref{lip2} together with \ref{w3}, \ref{w8} and \ref{p2}, we arrive at the first part of \eqref{lip4:1}. Now we shall deal with proving the second estimate of \eqref{lip4:1}. For this, it is enough to show that there exists a constant $c\equiv c(n,N,p,q,\alpha,[a]_{0,\alpha})$ such that
	\begin{align}
		\label{est_lip5}
		I_{0}:=\lVert a\rVert_{L^\infty(B_{2R}\cap B_i)}^\frac{1}{q}\fint_{B_{i}}|v-v_i|\ dx\leqslant c\min\{r_i,R\}\la^{\frac{1}{q}}.
	\end{align}
	In fact, the arguments for proving the above inequality are similar to the ones used in the proof of Lemma \ref{lip2}. We shall divide into several cases depending on a position of the balls $\frac{3}{4}B_{i}$ and $B_{2R}$ as in the proof of Lemma \ref{lip2}.
	
	\textbf{Case 1: $\frac{3}{4}B_i\subset B_{2R}$ and $B_{i}\subset B_{2R}$.} Using triangle inequality and H\"older continuity of the coefficient function $a(\cdot)$, we estimate the term $I_0$ in \eqref{est_lip5} as 
	\begin{align*}
		I_0\le c(n)\lVert a\rVert_{L^\infty(B_i)}^\frac{1}{q}\fint_{B_{i}}|v-(v)_{B_i}|\ dx
	\end{align*} 
	and 
	\begin{align*}
		\lVert a\rVert_{L^\infty(B_i)}\leqslant \inf_{x\in B_i}a(x)+[a]_{0,\alpha}r_i^\alpha.
	\end{align*}
	Then the last two displays together with the Poincar\'e inequality imply
	\begin{align}
		\label{lip4:4}
		\begin{split}
			I_0
			\leqslant &cr_i\fint_{B_i}\left[\inf_{x\in B_i}a(x)\right]^\frac{1}{q}\left(|\na u|+\left|\frac{u-(u)_{B_{2R}}}{2R}\right|\right)\, dx
			\\&+cr_i^{1+\frac{\alpha}{q}}\fint_{B_i}|\na u|+\left|\frac{u-(u)_{B_{2R}}}{2R}\right|\ dx
			=: c(I_{1}+I_{2})
		\end{split}
	\end{align}
	for some constant $c\equiv c(n,N,q,[a]_{0,\alpha})$. Now we estimate the resulting terms $I_1$ and $I_2$ in the last display. For this, using H\"older's inequality and recalling $1<\gamma p < \gamma q$, we have 
	\begin{align*}
		\begin{split}
			I_{1} 
			&\leqslant r_i\left(\fint_{B_i}\left[\inf_{x\in B_i}a(x)\right]^\gamma\left(|\na u|+\left|\frac{u-(u)_{B_{2R}}}{2R}\right|\right)^{q\gamma}\ dx\right)^\frac{1}{q\gamma}
			\\&
			\leqslant cr_i\left(\fint_{16B_i}\left(a(x)\left(|\na u|^q+\left|\frac{u-(u)_{2R}}{2R}\right|^{q}\right)\right)^\gamma\rchi_{B_{2R}}\ dx\right)^{\frac{1}{p\gamma}\frac{p}{q}}
			\\&
			\leqslant
			cr_{i}[G(y_i)]^{\frac{p}{q}} 
			\leqslant
			cr_{i} \lambda^{\frac{1}{q}}
		\end{split}
	\end{align*}
	with some constant $c\equiv c(n,N,p,q,[a]_{0,\alpha})$, where we have also used the fact that there exists a point $y_{i}\in E(\lambda^{1/p})^{c}\cap 16B_{i}$ via \ref{w2}. For the term $I_{2}$ in \eqref{lip4:4}, we apply H\"older's inequality again by observing $1<\gamma p < \gamma q$ in order to obtain 
	\begin{align}
		\label{lip4:6}
		\begin{split}
			I_{2} 
			\leqslant&
			r_i^{1+\frac{\alpha}{q}}\left(\fint_{B_i}\left(|\na u|+\left|\frac{u-(u)_{B_{2R}}}{2R}\right|\right)^{\de p}\ dx\right)^{\frac{1}{\de p}\left(1-\frac{p}{q}\right)}
			\\&\times\left(\fint_{B_i}\left(|\na u|+\left|\frac{u-(u)_{B_{2R}}}{2R}\right|\right)^{\gamma p}\ dx\right)^{\frac{1}{\gamma p}\frac{p}{q}},
		\end{split}
	\end{align}
	where $\delta\in (\delta_0,1)$ is a fixed constant satisfying \eqref{a2}-\eqref{ge_la}. Recalling $\frac{3}{4}r_{i} \leqslant R$ together with applying the Poincar\'e inequality and \eqref{a2}, we observe that 
	\begin{align*}
		\begin{split}
			&r_i^\frac{\alpha}{q}\left(\fint_{B_i}\left(|\na u|+\left|\frac{u-(u)_{B_{2R}}}{2R}\right|\right)^{\de p}\ dx\right)^{\frac{1}{\de p}\left(1-\frac{p}{q}\right)}
			\\&
			\leqslant cR^{\frac{\alpha}{q}-\frac{n}{\de p}\left(1-\frac{p}{q}\right)}\left(\int_{B_{i}}\left(|\na u|+\left|\frac{u-(u)_{B_{2R}}}{2R}\right|\right)^{\de p}\ dx\right)^{\frac{1}{\de p}\left(1-\frac{p}{q}\right)}
			\\&
			\leqslant 
			cR^{\frac{\alpha}{q}-\frac{n}{\de p}\left(1-\frac{p}{q}\right)}\left(\int_{B_{2R}}|\na u|^{\de p}\ dx\right)^{\frac{1}{\de p}\left(1-\frac{p}{q}\right)}\leqslant c
		\end{split}
	\end{align*}
	for some constant $c\equiv c(n,N,p)$. Inserting the content of the last display into \eqref{lip4:6} and using the same argument as in the last part \eqref{lip2:3}, we obtain 
	\begin{align}
		\label{lip4:8}
		\begin{split}
			I_{2} \leqslant c r_{i}\lambda^{\frac{1}{q}}.	
		\end{split}
	\end{align}
	Putting the estimates of \eqref{lip4:6}-\eqref{lip4:8} in \eqref{lip4:4}, we arrive at the validity of \eqref{est_lip5} in the case.
	
	\textbf{Case 2: $\frac{3}{4}B_{i}\subset B_{2R}$ and $B_{i}\cap B_{2R}^{c}\neq \emptyset$.} Using the definition of $v_{i}$ in \eqref{v_i} and the boundary Poincar\'e type inequality of Lemma \ref{boundary_poincare}, $I_0$ in \eqref{est_lip5} can be estimated as follows:
	\begin{align}
		\label{lip4:9}
		I_0
		\leqslant c\lVert a\rVert_{L^\infty(B_{2R}\cap B_i)}^\frac{1}{q}\fint_{B_{i}}|v|\ dx\leqslant c\lVert a\rVert_{L^\infty(B_{2R}\cap 4B_i)}^\frac{1}{q}\left(\fint_{4B_{i}}|\na v|^{\gamma p}\ dx\right)^\frac{1}{\gamma p}
	\end{align}
	with a constant $c\equiv c(n,N,p)$. Recalling that 
	\begin{align*}
		\lVert a\rVert_{L^\infty(B_{2R}\cap 4B_i)} \leqslant \inf\limits_{x\in B_{2R}\cap 4B_{i}}a(x) + 4[a]_{0,\alpha}r_{i}^{\alpha},
	\end{align*}
	we continue to estimate the resulting term in \eqref{lip4:9} with again applying H\"older's inequality as follows
	\begin{align*}
		\begin{split}
			I_{0}
			&\leqslant cr_i\left(\fint_{4B_i}\left[\inf_{x\in B_{2R} \cap 4B_i}a(x)\right]^\gamma\left(|\na u|+\left|\frac{u-(u)_{B_{2R}}}{2R}\right|\right)^{q\gamma}\rchi_{B_{2R}}\ dx\right)^\frac{1}{q\gamma}
			\\&
			\quad 
			+cr_i^{1+\frac{\alpha}{q}}\left(\fint_{4B_i}\left(|\na u|+\left|\frac{u-(u)_{B_{2R}}}{2R}\right|\right)^{p\gamma}\rchi_{B_{2R}}\ dx\right)^\frac{1}{p\gamma}
		\end{split}
	\end{align*}
	for some constant $c\equiv c(n,N,q,\alpha,[a]_{0,\alpha})$. Once we arrive at this stage, the rest of the estimate \eqref{est_lip5} can be argued similarly as shown in \textbf{Case 1}.
	
	\textbf{Case 3: $\frac{3}{4}B_{i}\cap B_{2R}^{c}\neq \emptyset$ and $r_i\leqslant R$.} In this case, we apply the boundary type Poincar\'e inequality of Lemma \ref{boundary_poincare} to obtain
	\begin{align*}
		I_0 =\lVert a\rVert_{L^\infty(B_{2R}\cap B_i)}^\frac{1}{q}\fint_{B_i}|v|\ dx\leqslant c\lVert a\rVert_{L^\infty(B_{2R}\cap 4B_i)}^\frac{1}{q}r_i\left(\fint_{4B_{i}}|\na v|^{\gamma p}\ dx\right)^\frac{1}{\ga p}
	\end{align*}
	for a constant $c\equiv c(n,N,p)$, where we have used the definition of $v_{i}$ in \eqref{v_i}. Then, the remaining can be done in exactly the same way as above.
	
	\textbf{Case 4: $\frac{3}{4}B_{i}\cap B_{2R}^{c}\neq \emptyset$ and $r_i> R$.} By elementary calculations, we have
	\begin{align*}
		\begin{split}
			I_0=\lVert a\rVert_{L^\infty(B_{2R})}^\frac{1}{q}\fint_{B_i}|v|\ dx
			&\leqslant
			c(n)\lVert a\rVert_{L^\infty(B_{2R})}^\frac{1}{q}\fint_{B_{2R}}|v|\ dx
			\\&
			\leqslant c(n)\lVert a\rVert_{L^\infty(B_{2R})}^\frac{1}{q}R\fint_{B_{2R}}\left|\frac{u-(u)_{B_{2R}}}{2R}\right|\ dx
			\\&\leqslant c(n)R\left(\fint_{B_{2R}}\left(\lVert a\rVert_{L^\infty(B_{2R})}\left|\frac{u-(u)_{B_{2R}}}{2R}\right|^q\right)^{\delta}\ dx\right)^\frac{1}{q\delta}.
		\end{split}
	\end{align*}
    Arguing as in the proof of Lemma~\ref{poinca_lem}, we obtain
    \begin{align*}
    	I_0\le cR\left(\fint_{B_{2R}}\left[H(x,|\na u|)\right]^\delta\ dx\right)^\frac{1}{q\delta}
    	\le 
    	cR[G(y)]^\frac{p}{q}\le cR \la^\frac{1}{q}
    \end{align*}
    for some $c\equiv c(n,N,p,q,[a]_{0,\alpha})$, where we have used the fact that there exists $y\in E(\la^{1/p})^c\cap B_{2R}$.
	Finally, taking into account all the cases we considered above, the estimate \eqref{est_lip5} holds true.
\end{proof}

\begin{lemma}\label{p_lip}
	Under the settings and notations of Subsection \ref{subsec3-1}, the function $v_\la$ defined in \eqref{lip_trunc} belongs to $W^{1,\infty}(B_{2R})$.
\end{lemma}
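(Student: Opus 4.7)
The plan is to combine the pointwise estimates of Lemma~\ref{lip1} and Lemma~\ref{lip4} on the open set $E(\la^{1/p})^c$ with an analogous pointwise bound on the complementary set $E(\la^{1/p})$, where $v_\la$ reduces to $v$.

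First, by \ref{w2} together with property P1 one has $\spt(\psi_i)\subset \frac{3}{4}B_i\subset E(\la^{1/p})^c$ for every $i\in\mathbb{N}$, hence $\psi_i\equiv 0$ on $E(\la^{1/p})$ and therefore $v_\la=v$ there in view of \eqref{lip_trunc}. On the open set $E(\la^{1/p})^c$, the finite-overlap property W5 makes the sum in \eqref{lip_trunc} locally finite, so $v_\la\in W^{1,1}_{\loc}(E(\la^{1/p})^c)$ with weak gradient obtained by termwise differentiation. Combining these two observations, $v_\la\in W^{1,1}(B_{2R})$, its weak gradient coincides with $\nabla v$ a.e.\ on $E(\la^{1/p})\cap B_{2R}$ and with the expression controlled by Lemma~\ref{lip4} a.e.\ on $E(\la^{1/p})^c\cap B_{2R}$.

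For the $L^\infty$-bounds, Lemma~\ref{lip1} and Lemma~\ref{lip4} directly give $|v_\la|\leqslant c R \la^{1/p}$ and $|\na v_\la|\leqslant c \la^{1/p}$ on $E(\la^{1/p})^c$. On $E(\la^{1/p})$, the Lebesgue differentiation theorem applied to $G_p+G_q$ yields, for a.e.\ $x$,
\begin{align*}
    (G_p+G_q)(x)\leqslant M(G_p+G_q)(x)=[G(x)]^{p\ga}\leqslant \la^{\ga},
\end{align*}
and reading this through the definitions \eqref{def_g_p}--\eqref{def_g_q} gives $|u(x)-(u)_{B_{2R}}|\leqslant 2R\la^{1/p}$ together with $|\na u(x)|\leqslant\la^{1/p}$ a.e.\ on $E(\la^{1/p})\cap B_{2R}$. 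The product rule applied to \eqref{func_v} together with \eqref{cut_off} then produces $|v(x)|\leqslant 2R\la^{1/p}$ and $|\na v(x)|\leqslant c(n)\la^{1/p}$ a.e.\ on the same set.

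Merging the two cases yields $\norm{v_\la}_{L^\infty(B_{2R})}+R\norm{\na v_\la}_{L^\infty(B_{2R})}\leqslant c R\la^{1/p}$, so $v_\la\in W^{1,\infty}(B_{2R})$. Essentially no new difficulty arises beyond what is already present in Lemmas~\ref{lip1} and \ref{lip4}; the only delicate point is identifying the weak gradient across the interface between $E(\la^{1/p})$ and $E(\la^{1/p})^c$, which is handled automatically by the smoothness and local finiteness of the partition of unity, since no singular distributional correction can appear along the boundary.
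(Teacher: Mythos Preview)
Your approach differs from the paper's: rather than establishing the mean--oscillation estimate
\[
\fint_{B_r(z)}\left|v_\lambda-(v_\lambda)_{B_r(z)}\right|\,dx\leqslant c\,r\,\lambda^{1/p}
\]
for arbitrary balls and then invoking Campanato's characterization of Lipschitz functions, you argue by bounding $|v_\lambda|$ and $|\nabla v_\lambda|$ pointwise on $E(\lambda^{1/p})$ and $E(\lambda^{1/p})^c$ separately, using Lemmas~\ref{lip1} and~\ref{lip4} on the bad set and Lebesgue differentiation on the good set. This is more direct and avoids the case analysis of the paper's proof.

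There is, however, a genuine gap. The passage from ``$v_\lambda=v$ on $E(\lambda^{1/p})$'' and ``$v_\lambda\in W^{1,1}_{\loc}(E(\lambda^{1/p})^c)$'' to ``$v_\lambda\in W^{1,1}(B_{2R})$'' is not automatic, and your justification (``handled automatically by the smoothness and local finiteness of the partition of unity'') is incorrect as stated. Property~W5 gives finite overlap only for points of the \emph{open} set $E(\lambda^{1/p})^c$; near a point of $\partial E(\lambda^{1/p})$ the Whitney radii tend to zero and infinitely many $\psi_i$ are nonzero in every neighborhood, so the sum $\sum_i\psi_i(v-v_i)$ is \emph{not} locally finite there. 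Consequently you cannot simply read off weak differentiability across the interface. To close the gap you must either show the series converges absolutely in $W^{1,1}(B_{2R})$---for instance by bounding $\sum_i\bigl(\|\psi_i(v-v_i)\|_{L^1}+\|\nabla[\psi_i(v-v_i)]\|_{L^1}\bigr)$ using \ref{p2}, Lemma~\ref{lip2}, and the bounded overlap of $\{\tfrac{3}{4}B_i\}$ inside $E(\lambda^{1/p})^c$---or else prove directly that the correction $v_\lambda-v$, which vanishes on the closed set $E(\lambda^{1/p})$ and is smooth with bounded gradient on its complement, extends continuously (hence Lipschitz) across the boundary.

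The paper's Campanato route sidesteps this issue entirely: the oscillation estimate is proved for $v_\lambda$ viewed merely as an $L^1_{\loc}$ function, with no a priori Sobolev regularity assumed, and Campanato's theorem then furnishes the Lipschitz representative in one stroke. Once your gap is patched, your argument is arguably more elementary; the paper's is more self-contained in that it never needs to justify termwise differentiation of the infinite sum.
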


\begin{proof}
	First, we show that there exists a constant $c\equiv c(n,N,p)$ such that the following estimate
	\begin{align}
		\label{p_lip:1}
		\fint_{B_{r}(z)}\left|\frac{v_\la-(v_\la)_{B_r(z)}}{r}\right|\ dx\leqslant c\lambda^{\frac{1}{p}}
	\end{align}
	holds, whenever $B_{r}(z)\subset \RR^{n}$ is a ball. In order to prove the inequality of \eqref{p_lip:1}, we divide several cases depending on a position of the ball $B_{r}(z)$ and the upper level set of the function $G$ in \eqref{def_g}. 
	
	\textbf{Case 1: $B_{r}(z)\subset E(\lambda^{1/p})^{c}$.} Applying the mean value theorem and Lemma \ref{lip4}, we have 
	\begin{align*}
		|v_\la(y)-(v_\la)_{B_r(z)}|\leqslant r\sup_{x\in E(\lambda^{1/p})^c}|\na v_\la(x)|\leqslant cr\la^\frac{1}{p}
	\end{align*}
	for every $y\in B_{r}(z)$ with some constant $c\equiv c(n,N,p)$.
	
	\textbf{Case 2: $B_{r}(z)\cap E(\lambda^{1/p})\neq \emptyset$}. By the elementary inequalities, we find 
	\begin{align}
		\label{p_lip:3}
		\begin{split}
			\fint_{B_{r}(z)}\left|\frac{v_\la-(v_\la)_{B_r(z)}}{r}\right|\ dx
			&\leqslant
			2\fint_{B_{r}(z)}\left|\frac{v_\la-v}{r}\right|\ dx
			\\&
			\quad+2\fint_{B_{r}(z)}\left|\frac{v-(v)_{B_r(z)}}{r}\right|\ dx
			=:2(J_{1}+J_{2}).
		\end{split}
	\end{align}
	Next, we shall estimate the resulting terms in the last display. By the definition of the function $v_{\lambda}$ in \eqref{lip_trunc}, we have 
	\begin{align}
		\label{p_lip:4}
		J_{1} \leqslant \fint_{B_r(z)}\sum_{i\in D}\left|\frac{v-v_i}{r}\right|\psi_i\ dx\leqslant\sum_{i\in D}\frac{1}{|B_r|}\int_{B_r(z)\cap \frac{3}{4}B_i}\left|\frac{v-v_i}{r}\right|\ dx,
	\end{align}
	where 
	\begin{align}
		\label{p_lip:5}
		D= \left\{k\in\mathbb{N} : B_{r}(z)\cap \frac{3}{4}B_{k}\neq \emptyset \right\}.
	\end{align}
 For any fixed ball $B_{i}\equiv B_{r_{i}}(x_i)$ of our covering introduced in Subsection \ref{subsec3-1} and any points $y_1\in B_r(z)\cap \frac{3}{4}B_i$, $y_2\in B_r(z)\cap E(\la^{1/p})$, we observe 
	\begin{align*}
		8r_i\le \dist(x_i,E(\la^{1/p}))\leqslant |x_i-y_1|+|y_1-y_2|\leqslant r_i+2r.
	\end{align*}
	Using the estimate of the last display in \eqref{p_lip:4}, we have 
	\begin{align}
		\label{p_lip:6}
		J_{1} \leqslant \frac{c(n)}{|B_{r}|}\sum\limits_{i\in D}\int_{B_{r}(z)\cap \frac{3}{4}B_{i}} \left|\frac{v-v_i}{r_i}\right|\,dx,
	\end{align}
where the index set $D$ has been defined in \eqref{p_lip:5}. In order to estimate the resulting term in the last display further, we consider subcases depending on a position of the balls of $\frac{3}{4}B_{i} $ and $B_{2R}$ for indices $i\in D $. For indices $i\in D$ such that $\frac{3}{4} B_{i}\subset B_{2R}$ in \eqref{p_lip:6}, recalling that there exists a point $y_{i}\in E(\lambda^{1/p})\cap 16B_{i}$ by \ref{w2} and applying the Poincar\'e inequality, we estimate as
	
	\begin{align}
		\label{p_lip:7}
		\begin{split}
			\int_{B_r(z)\cap \frac{3}{4}B_i}\left|\frac{v-v_i}{r_{i}}\right|\ dx
			&\leqslant 
			c\int_{\frac{3}{4}B_i}|\na v|\ dx
			\\&
			\leqslant c\int_{16B_i}\left(|\na u|+\left|\frac{u-(u)_{B_{2R}}}{2R}\right|\right)\rchi_{B_{2R}}\ dx
			\\&
			\leqslant
			c|16B_i|\left(\fint_{16B_i}\left(|\na u|+\left|\frac{u-(u)_{B_{2R}}}{2R}\right|\right)^{p\gamma}\rchi_{B_{2R}}\ dx\right)^{\frac{1}{p\gamma}}
			\\&
			\leqslant c|16B_i|G(y_i)\leqslant c|16B_i|\la^\frac{1}{p}
		\end{split}
	\end{align}
	for some constant $c\equiv c(n,N)$, where we have also used the definition of the function $G$ in \eqref{def_g}. For indices $i\in D$ such that $\frac{3}{4}B_{i}\cap B_{2R}^{c}\neq \emptyset$ in \eqref{p_lip:6}, we apply Lemma \ref{boundary_poincare} to have 
	\begin{align}
		\label{p_lip:8}
		\int_{B_r(z)\cap \frac{3}{4}B_i}\left|\frac{v-v_i}{r_{i}}\right|\ dx= c\int_{\frac{3}{4}B_i}\frac{|v|}{r_i}\ dx\leqslant c|16B_i|G(y_i)\leqslant c|16B_i|\la^\frac{1}{p}
	\end{align}
	for some constant $c\equiv c(n,N,p)$, where $y_i\in E(\lambda^{1/p})\cap 16B_{i}$ is a point determined via \ref{w2}. Therefore, taking into account both subcases that we have discussed above together with inserting the estimates \eqref{p_lip:7}-\eqref{p_lip:8} into \eqref{p_lip:6}, we have 
	\begin{align*}
		J_{1} \leqslant \frac{c\lambda^{\frac{1}{p}}}{|B_{r}|}\sum\limits_{i\in D} \left|\frac{1}{4}B_{i} \right|.
	\end{align*}
	Recalling the disjointedness of the family $\left\{\frac{1}{4}B_{i}\right\}_{i\in\mathbb{N}}$ and $7r_i\le 2r$ for every $i\in D$ in \eqref{p_lip:5}, we conclude 
	\begin{align}
		\label{p_lip:10}
		J_{1} \leqslant c\lambda^{\frac{1}{p}}.
	\end{align}
	It remains to estimate the term $I_{2}$ in \eqref{p_lip:3}. As in the previous cases, applying the Poincar\'e inequality, we find 
	\begin{align}
		\label{p_lip:11}
		\begin{split}
			J_{2} &\leqslant
			c\fint_{B_{r}(z)}|\na v|\ dx
			\\
			&\leqslant c\fint_{B_{r}(z)}\left(|\na u|+\left|\frac{u-(u)_{B_{2R}}}{2R}\right|\right)\rchi_{B_{2R}}\ dx
			\\&\leqslant cG(y)\leqslant  c\la^\frac{1}{p}
		\end{split}
	\end{align}
	for some constant $c\equiv c(n,N)$ and a point $y\in B_{r}(z)\cap E(\lambda^{1/p})$. Inserting the estimates \eqref{p_lip:10} and \eqref{p_lip:11} into \eqref{p_lip:3}, we arrive at the inequality \eqref{p_lip:1}. Therefore, the H\"older continuity characterization of Campanato and \eqref{p_lip:1} yield the desired result of Lemma \ref{p_lip}. 
\end{proof}

\begin{remark}
Under the settings and notations of Subsection \ref{subsec3-1}, it is immediate to show the following
    \begin{align}
        \label{rmk:3.7:1}
        \lim_{\la\to\infty}\int_{B_R}[H(x,|\na u-\na v_\la|)]^\delta\ dx=0,
    \end{align}
    where the constant $\delta\in (\delta_0,1)$ and the function $v_{\la}$ have been defined in \eqref{a2} and \eqref{lip_trunc}, respectively. In fact, the last display means that any very weak solution $u$ of \eqref{me} under the assumption \eqref{exp} can be approximated locally by Lipschitz truncated functions. For the proof of \eqref{rmk:3.7:1}, recalling \eqref{cut_off} and \eqref{func_v}, it suffices to show the following assertion
    \begin{align}
        \label{rmk:3.7:2}
        \lim_{\la\to\infty}\int_{B_{2R}}[H(x,|\na v-\na v_\la|)]^\delta\ dx=0,
    \end{align}
    where the function $v$ is defined in \eqref{func_v}. For this, by the definition of $v$ and $v_{\la}$, we first observe
    \begin{align}
        \label{rmk:3.7:3}
        \begin{split}
            \int_{B_{2R}}[H(x,|\na v-\na v_\la|)]^\delta\ dx
            &=\int_{E(\la^{1/p})^c}[H(x,|\na v-\na v_\la|)]^\delta\ dx\\
            &\le c\int_{E(\la^{1/p})^c}[H(x,|\na v|)]^\delta\ dx+c\int_{E(\la^{1/p})^c}[H(x,|\na v_\la|)]^\delta\ dx.
        \end{split}
    \end{align}
    for a constant $c\equiv c(p,q)$. Applying Lemma~\ref{poinca_lem} and \eqref{rmk:1}, we find
    \begin{align*}
    \begin{split}
        \int_{B_{2R}}[H(x,|\na v|)]^\delta\ dx
        &\le c\int_{B_{2R}}\left( [H(x,|\na u|)]^\delta+\left[H\left(x,\left|\frac{u-(u)_{B_{2R}}}{R}\right|\right)\right]^\delta\right)\ dx\\
        &\le c\int_{B_{2R}}[H(x,|\na u|)]^\delta\ dx<\infty
    \end{split}
    \end{align*}
    for a constant $c\equiv c(n,N,p,q,[a]_{0,\alpha})$. In turn, the last display implies
    \begin{align*}
        \lim_{\la\to\infty}\int_{E(\la^{1/p})^c}[H(x,|\na v|)]^\delta\ dx=0.
    \end{align*}
    On the other hand, applying again Lemma~\ref{lip4}, we get
    \begin{align*}
    \begin{split}
        \lim_{\la\to\infty}\int_{E(\la^{1/p})^c}[H(x,|\na v_\la|)]^\delta\ dx&\le c\lim_{\la\to\infty}\la^{\delta} |E(\la^{1/p})^c|\\
        &\le c\lim_{\la\to\infty}\int_{E(\la^{1/p})^c}[G(x)]^{p\delta}\,dx=0.
    \end{split}
    \end{align*}
    for a constant $c\equiv c(n,N,p,q,[a]_{0,\alpha})$, where we have used  $E(\la^{1/p})^c=\{x\in\mathbb{R}^n\mid G(x)>\la^{1/p}\}$ and the fact that $G\in L^{\delta p}(B_{2R})$ via Lemma~\ref{p_lem}. Combination of the results from the last two displays in \eqref{rmk:3.7:2} yields \eqref{rmk:3.7:2}. As a direct consequence, the assertion in \eqref{rmk:3.7:1} holds true. 
\end{remark}

\begin{lemma}\label{pre_p_reverse}
	Under the settings and notations of Subsection \ref{subsec3-1}, the following inequality holds true:
	\begin{align}
		\label{pre_p:1}
		\begin{split}
			&\int_{B_{R}\cap E(\la^{1/p})}H(x,|\na u|)\ dx
			\\&\leqslant
			c\int_{B_{2R}\cap E(\la^{1/p})^c}\left(|\na u|^{p-1}\la^\frac{1}{p} +[a(x)]^{\frac{q-1}{q}}|\na u|^{q-1}\la^\frac{1}{q}\right) \ dx
			\\&
			\quad
			+c\int_{B_{2R}\cap E(\la^{1/p})}H\left(x,\left|\frac{u-(u)_{B_{2R}}}{2R}\right|\right)\ dx
			\\&
			\quad 
			+
			c\int_{B_{2R}\cap E(\la^{1/p})^c}\left(|F|^{p-1}\la^\frac{1}{p} +[a(x)]^{\frac{q-1}{q}}|F|^{q-1}\la^\frac{1}{q}\right) \ dx
			\\&
			\quad 
			+c\int_{B_{2R}\cap E(\la^{1/p})}H(x,|F|)\ dx,
		\end{split}
	\end{align}
where $c\equiv c(n,N,p,q,\alpha,\nu,L,[a]_{0,\alpha})$.
\end{lemma}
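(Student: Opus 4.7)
The approach is to use the Lipschitz truncation $v_\lambda$ from \eqref{lip_trunc} as a test function in the very weak formulation \eqref{def:3}. By Lemma~\ref{p_lip} we have $v_\lambda \in W^{1,\infty}(B_{2R})$, and $v_\lambda$ is essentially supported in $B_{2R}$ since $v\equiv 0$ outside $B_{2R}$ and $v_i=0$ whenever $\frac{3}{4}B_i \not\subset B_{2R}$; combined with the integrability \eqref{def:2} of $\nabla u$, the equation \eqref{def:3} extends to $\varphi = v_\lambda$ by a standard density argument. The plan is then to split every resulting integral along $E(\lambda^{1/p})$ and $E(\lambda^{1/p})^c$, exploiting that on the good set $v_\lambda \equiv v$ (so $\nabla v_\lambda = \eta \nabla u + (u-(u)_{B_{2R}}) \nabla \eta$), whereas on the bad set $\nabla v_\lambda$ is pointwise controlled by Lemmas~\ref{lip1} and~\ref{lip4}.

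On the good set, ellipticity from \eqref{str} gives $\nu\,\eta\, H(x,|\nabla u|) \leqslant \langle \mathcal{A}(x,\nabla u), \eta \nabla u\rangle$, and since $\eta \equiv 1$ on $B_R$, restricting to $B_R \cap E(\lambda^{1/p})$ isolates the target left-hand side. The cross-term $\langle \mathcal{A}(x,\nabla u), (u-(u)_{B_{2R}})\nabla \eta\rangle$ is treated via the growth bound in \eqref{str} and Young's inequality, applied separately on the $p$- and $q$-branches; the crucial manoeuvre on the $q$-branch is the split $a(x)|\nabla u|^{q-1}=[a(x)]^{(q-1)/q}|\nabla u|^{q-1}\cdot [a(x)]^{1/q}$ so that the $a$-weight distributes correctly. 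This produces the Poincar\'e-type contribution $\int_{B_{2R}\cap E(\lambda^{1/p})} H(x,|(u-(u)_{B_{2R}})/(2R)|)\,dx$, an absorbable residual $\epsilon\!\int_{B_{2R}\cap E(\lambda^{1/p})} H(x,|\nabla u|)\,dx$, and, after the same manoeuvre for the $F$-pairing, the good-set integral $\int_{B_{2R}\cap E(\lambda^{1/p})} H(x,|F|)\,dx$. On the bad set the pointwise controls $|\nabla v_\lambda| \lesssim \lambda^{1/p}$ and $[a(x)]^{1/q}|\nabla v_\lambda| \lesssim \lambda^{1/q}$ from Lemma~\ref{lip4} convert $|\mathcal{A}(x,\nabla u)||\nabla v_\lambda|$ into exactly $|\nabla u|^{p-1}\lambda^{1/p}+[a(x)]^{(q-1)/q}|\nabla u|^{q-1}\lambda^{1/q}$ (and analogously for $F$), reproducing the two bad-set terms appearing in \eqref{pre_p:1}.

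The remaining issue is the absorption of the residual $\epsilon\!\int_{B_{2R}\cap E(\lambda^{1/p})} H(x,|\nabla u|)$, which cannot be subtracted against the left-hand side directly since the latter is integrated only over $B_R$. To close the estimate I would re-run the whole construction with concentric radii $R \leqslant \tau < s \leqslant 2R$ in place of the fixed pair $(R,2R)$, using a cutoff with $|\nabla \eta|\leqslant c/(s-\tau)$ and redoing the Whitney construction on the same level set of $G$; all preceding lemmas of the section remain intact with constants independent of $\tau$ and $s$. Setting $h(r):=\int_{B_r\cap E(\lambda^{1/p})} H(x,|\nabla u|)\,dx$, which is finite because Lebesgue differentiation applied to $G$ yields $H(x,|\nabla u|)\leqslant 2\,G^p\leqslant 2\lambda$ a.e.\ on $B_{2R}\cap E(\lambda^{1/p})$, the Caccioppoli-type estimate takes the form
\[
h(\tau)\leqslant \theta\, h(s) + \frac{A}{(s-\tau)^{\gamma_1}}+\frac{B}{(s-\tau)^{\gamma_2}},
\qquad \theta\in(0,1),
\]
and Lemma~\ref{iter_lemma} concludes. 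The principal difficulty is orchestrating the $p$- and $q$-branch Young's inequalities so that both absorption constants are simultaneously small while the mixed exponents $\lambda^{1/p}$ and $\lambda^{1/q}$ survive unspoiled in the bad-set terms; this is exactly where the weighted Lipschitz bound in Lemma~\ref{lip4} plays its decisive role.
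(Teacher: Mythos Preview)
Your approach is viable but differs from the paper's in one key respect: the paper tests with $v_\lambda\eta^{q}$ rather than $v_\lambda$. This extra power of $\eta$ is precisely what circumvents the absorption problem you identify. Indeed, writing $\nabla(v_\lambda\eta^{q})=\eta^{q}\nabla v_\lambda+q\eta^{q-1}v_\lambda\nabla\eta$, the ellipticity term on the good set becomes $\nu\!\int\eta^{q+1}H(x,|\nabla u|)$, while the cross terms, after Young's inequality with exponents $p/(p-1)$ and $q/(q-1)$ respectively, produce residuals carrying $\eta^{qp/(p-1)}$ and $\eta^{q^{2}/(q-1)}$. Since
\[
\frac{qp}{p-1}\geqslant q+1\quad\text{and}\quad\frac{q^{2}}{q-1}\geqslant q+1,
\]
both residuals are bounded by $\varepsilon\!\int\eta^{q+1}H(x,|\nabla u|)$ and are therefore absorbed directly into the ellipticity term, with no iteration needed. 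The same mechanism handles the $I_{2}$ and $I_{31}$ contributions. This is the cleaner route and yields \eqref{pre_p:1} in one pass.

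Your iteration alternative can be made to work, but the assertion that ``all preceding lemmas of the section remain intact with constants independent of $\tau$ and $s$'' is not accurate. Once $|\nabla\eta|\lesssim(s-\tau)^{-1}$, the bound $|\nabla v|\leqslant|\nabla u|+c\,\tfrac{R}{s-\tau}\bigl|\tfrac{u-(u)_{B_{2R}}}{2R}\bigr|$ propagates a factor $R/(s-\tau)$ through the Poincar\'e steps in Lemmas~\ref{lip2} and~\ref{lip4}; only Lemma~\ref{lip1} (which uses $|\eta|\leqslant1$ but not $\nabla\eta$) is genuinely unaffected. These factors are exactly of the form $(s-\tau)^{-\gamma}$ required by Lemma~\ref{iter_lemma}, so the argument still closes, but you would need to track them explicitly rather than declare them absent. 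The paper's choice of test function avoids this bookkeeping entirely.
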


\begin{proof}
	With the functions $\eta$ defined in \eqref{cut_off} and $v_{\lambda}$ defined in \eqref{lip_trunc}, we take $v_\la\eta^{q}\in W_0^{1,\infty}(B_{2R})$ as a test function to the system \eqref{me} by recalling Lemma \ref{p_lip}. Then we have
	\begin{align}
		\label{pre_p:2}
		\begin{split}
			I_{1}&:=\int_{B_{2R}}\iprod{\mA(x,\na u)}{\na v_\la}\eta^{q}\ dx
			\\&=-q\int_{B_{2R}}\iprod{\mA(x,\na u)}{v_\la\na \eta}\eta^{q-1} dx
			\\&\quad+\int_{B_{2R}}\iprod{|F|^{p-2}F+a(x)|F|^{q-2}F}{\na (v_\la \eta^q)}\ dx=:I_{2}+I_{3}.
		\end{split}
	\end{align}
	Recalling the definition of $v_{\lambda}$ in \eqref{lip_trunc}, we see
	\begin{align}
		\label{pre_p:3}
		\begin{split}
			I_{1} &= \int_{B_{2R}\cap E(\la^{1/p})}\iprod{\mA(x,\na u)}{\na u}\eta^{q+1} \ dx
			\\&
			\quad
			+
			\int_{B_{2R}\cap E(\la^{1/p})}\iprod{\mA(x,\na u)}{(u-(u)_{B_{2R}})\na \eta}\eta^{q}\ dx
			\\&\quad
			+ 
			\int_{B_{2R}\cap E(\la^{1/p})^c}\iprod{\mA(x,\na u)}{\na v_\la}\eta^q\ dx
			=:I_{11}+I_{12}+I_{13}.
		\end{split}
	\end{align}
	Using the structure assumption \eqref{str}, we have 
	\begin{align}
		\label{pre_p:4}
		I_{11}\ge \nu
		\int_{B_{2R}\cap E(\la^{1/p})}H(x,|\na u|)\eta^{q+1}\ dx.
	\end{align}
	Again using \eqref{str} and Young's inequality, we find 
	\begin{align}
		\label{pre_p:5}
		\begin{split}
			I_{12} &\geqslant -c\int_{B_{2R}\cap E(\la^{1/p})}(|\na u|^{p-1}+a(x)|\na u|^{q-1})\left|\frac{u-(u)_{B_{2R}}}{2R}\right|\eta^q\ dx
			\\&
			\geqslant
			-\varepsilon\int_{B_{2R}\cap E(\la^{1/p})}\left(|\na u|^p\eta^{\frac{qp}{p-1}}+a(x)|\na u|^q\eta^{\frac{q^2}{q-1}}\right)\ dx
			\\&
			\quad
			-c_{\varepsilon}\int_{B_{2R}\cap E(\la^{1/p})} H\left(x,\left|\frac{u-(u)_{B_{2R}}}{2R}\right|\right)\ dx
			\\&
			\geqslant
			-\varepsilon\int_{B_{2R}\cap E(\la^{1/p})}H\left(x,|\na u|\right)\eta^{q+1}\ dx
			\\&
			\quad
			-c_{\varepsilon}\int_{B_{2R}\cap E(\la^{1/p})} H\left(x,\left|\frac{u-(u)_{B_{2R}}}{2R}\right|\right)\ dx
		\end{split}
	\end{align}
	with some constant $c_{\varepsilon}\equiv c_{\varepsilon}(p,q,\nu,L,\varepsilon)$, where $\varepsilon>0$ is arbitrarily given. Here, we have also used the fact that 
	\begin{equation*}
		\frac{qp}{p-1}\geqslant \frac{q^2}{q-1}\geqslant q+1.
	\end{equation*}
	For the estimate of $I_{13}$ in \eqref{pre_p:3}, we apply the structure assumption \eqref{str} and then Lemma \ref{lip4} to find  
	\begin{align}
		\label{pre_p:7}
		\begin{split}
			I_{13} &\geqslant 
			-L\int_{B_{2R}\cap E(\la^{1/p})^c}\left(|\na u|^{p-1}+a(x)|\na u|^{q-1}\right)|\na v_\la|\ dx
			\\&
			\geqslant
			-c\int_{B_{2R}\cap E(\la^{1/p})^c}\left(|\na u|^{p-1}\la^\frac{1}{p}+[a(x)]^\frac{q-1}{q}|\na u|^{q-1}\la^\frac{1}{q}\right)\ dx
		\end{split}
	\end{align}
	for some constant $c\equiv c(n,N,p,q,\nu,L)$. Inserting the estimates obtained in \eqref{pre_p:4}, \eqref{pre_p:5} and \eqref{pre_p:7} into \eqref{pre_p:3} and also selecting $\varepsilon\equiv \nu/2$, we have the estimate for $I_{1}$ in \eqref{pre_p:2} as
	\begin{align*}
		\begin{split}
			I_{1} &\geqslant
			\nu/2 \int_{B_{2R}\cap E(\la^{1/p})}H(x,|\na u|)\eta^{q+1}\ dx
			\\&
			\quad
			-
			c\int_{B_{2R}\cap E(\la^{1/p})}H\left(x,\left|\frac{u-(u)_{B_{2R}}}{2R} \right|\right)\,dx
			\\&
			\quad
			-
			c\int_{B_{2R}\cap E(\la^{1/p})^c}\left(|\na u|^{p-1}\la^\frac{1}{p}+[a(x)]^\frac{q-1}{q}|\na u|^{q-1}\la^{\frac{1}{q}}\right)\ dx
		\end{split}
	\end{align*}
	for some constant $c\equiv c(n,N,p,q,\nu,L)$. Now, we turn our attention to the term $I_{2}$ in \eqref{pre_p:2}. Using again \eqref{str}, we see 
	\begin{align}
		\label{pre_p:9}
		\begin{split}
			I_{2} &\leqslant
			c\int_{B_{2R}\cap E(\la^{1/p})}(|\na u|^{p-1}+a(x)|\na u|^{q-1})\left|\frac{u-(u)_{B_{2R}}}{2R}\right|\eta^q\ dx
			\\&
			\quad 
			+
			c\int_{B_{2R}\cap E(\la^{1/p})^c}\left(|\na u|^{p-1}\frac{|v_\la|}{R}+a(x)|\na u|^{q-1}\frac{|v_\la|}{R}\right)\ dx
		\end{split}
	\end{align}
	with a constant $c\equiv c(n,N,\nu,L)$. Then arguing similarly as we have done in \eqref{pre_p:5} for the first term and applying Lemma \ref{lip1} for the second one in \eqref{pre_p:9}, we continue to estimate \eqref{pre_p:9} as
	\begin{align}
		\label{pre_p:10}
		\begin{split}
			I_{2} &\leqslant
			\varepsilon\int_{B_{2R}\cap E(\la^{1/p})}H(x,|\na u|)\eta^{q+1}\ dx
			\\&
			\quad
			+
			c_{\varepsilon} \int_{B_{2R}\cap E(\la^{1/p})}H\left(x,\left|\frac{u-(u)_{B_{2R}}}{2R} \right|\right)\,dx
			\\&
			\quad
			+
			c\int_{B_{2R}\cap E(\la^{1/p})^c}\left(|\na u|^{p-1}\la^{\frac{1}{p}}+[a(x)]^\frac{q-1}{q}|\na u|^{q-1}\la^\frac{1}{q}\right)\ dx
		\end{split}
	\end{align}
	for constants $c_{\varepsilon}\equiv c_{\varepsilon}(n,N,p,q,\nu,L,\varepsilon)$, $c\equiv c(n,N,p,q,\nu,L)$, whenever $\varepsilon>0$. Finally, we shall move onto estimating the term $I_{3}$ in \eqref{pre_p:2}. To this end, we first write it in the following.
	\begin{align}
		\label{pre_p:11}
		\begin{split}
			I_{3} &= \int_{B_{2R}\cap E(\la^{1/p})}\iprod{|F|^{p-2}F+a(x)|F|^{q-2}F}{\na (v_\la \eta^q)}\ dx
			\\&
			\quad 
			+
			\int_{B_{2R}\cap E(\la^{1/p})^c}\iprod{|F|^{p-2}F+a(x)|F|^{q-2}F}{\na (v_\la \eta^q)}\ dx \\&=: I_{31} + I_{32}.
		\end{split}
	\end{align}
	
	Applying Young's inequality, for every $\varepsilon>0$, we have 
	\begin{align}
		\label{pre_p:12}
		\begin{split}
			I_{31}
			&\leqslant
			\varepsilon\int_{B_{2R}\cap E(\la^{1/p})}H(x,|\na (v_\la\eta^q)|)\ dx 
			\\&\quad+ c_{\varepsilon}\int_{B_{2R}\cap E(\la^{1/p})}H(x,|F|)\ dx
			=:
			\varepsilon J_{1} + c_{\varepsilon}J_{2}
		\end{split}
	\end{align}
	with some constant $c_{\varepsilon}\equiv c_{\varepsilon}(n,N,p,q,\nu,\varepsilon)$. We shall deal with the terms appearing in the last display further. Recalling the definition of $v_{\lambda}$ in \eqref{lip_trunc} and using Young's inequality, we find 
	\begin{align*}
		\begin{split}
			J_{1} 
			&\leqslant
			c\int_{B_{2R}\cap E(\la^{1/p})}\left(|\na u|^p\eta^{p(q+1)}+a(x)|\na u|^q\eta^{q(q+1)}\right)\ dx
			\\&
			\quad
			+c\int_{B_{2R}\cap E(\la^{1/p})}H\left(x,\left|\frac{u-(u)_{B_{2R}}}{2R}\right|\right)\ dx
			\\&
			\leqslant
			c\int_{B_{2R}\cap E(\la^{1/p})}H(x,|\na u|)\eta^{q+1}\ dx
			\\&
			\quad
			+c\int_{B_{2R}\cap E(\la^{1/p})}H\left(x,\left|\frac{u-(u)_{B_{2R}}}{2R}\right|\right)\ dx
		\end{split}
	\end{align*}
	with a constant $c\equiv c(n,N,p,q,\nu)$. Therefore, inserting the resulting estimate of the above display into \eqref{pre_p:12} and reabsorbing the terms, we conclude 
	\begin{align}
		\label{pre_p:14}
		\begin{split}
			I_{31} &\leqslant
			\varepsilon \int_{B_{2R}\cap E(\la^{1/p})}H(x,|\na u|)\eta^{q+1}\ dx
			\\&
			\quad
			+ 
			c_{\varepsilon} \int_{B_{2R}\cap E(\la^{1/p})}H\left(x,\left|\frac{u-(u)_{B_{2R}}}{2R}\right|\right)\ dx
			\\&
			\quad
			+
			c_{\varepsilon}\int_{B_{2R}\cap E(\la^{1/p})}H(x,|F|)\ dx
		\end{split}
	\end{align} 
	for a constant $c_{\varepsilon}\equiv c_{\varepsilon}(n,N,p,q,\nu,\varepsilon)$, whenever $\varepsilon>0$ is an arbitrary number. Now we estimate the remaining term $I_{32}$ in \eqref{pre_p:11}. Recalling the definition of $v_{\lambda}$ in \eqref{lip_trunc} again and applying Lemma \ref{lip1} and Lemma \ref{lip4}, we find 
	\begin{align}
		\label{pre_p:15}
		\begin{split}
			I_{32} 
			&\leqslant
			c\int_{B_{2R}\cap E(\la^{1/p})^c}\left[|F|^{p-1}\left(\frac{|v_\la|}{R}+|\na v_\la|\right)\right.
			\\&\qquad
			\left.+[a(x)]^\frac{q-1}{q}|F|^{q-1}\left([a(x)]^\frac{1}{q}\frac{|v_\la|}{R}+[a(x)]^\frac{1}{q}|\na v_\la|\right)\right]\ dx
			\\&
			\leqslant
			c\int_{B_{2R}\cap E(\la^{1/p})^c}\left(|F|^{p-1}\la^\frac{1}{p}+[a(x)]^\frac{q-1}{q}|F|^{q-1}\la^\frac{1}{q}\right) \,dx
		\end{split}
	\end{align}
	with a constant $c\equiv c(n,N,p,q,\nu,L)$. Putting the estimates in \eqref{pre_p:14}-\eqref{pre_p:15} into \eqref{pre_p:11}, for every $\varepsilon>0$, we have 
	\begin{align}
		\label{pre_p:16}
		\begin{split}
			I_{3} 
			&\leqslant
			\varepsilon \int_{B_{2R}\cap E(\la^{1/p})}H(x,|\na u|)\eta^{q+1}\ dx
			\\&
			\quad
			+ 
			c_{\varepsilon} \int_{B_{2R}\cap E(\la^{1/p})}H\left(x,\left|\frac{u-(u)_{B_{2R}}}{2R}\right|\right)\ dx
			\\&
			\quad
			+
			c_{\varepsilon}\int_{B_{2R}\cap E(\la^{1/p})}H(x,|F|)\ dx
			\\&
			\quad
			+
			c\int_{B_{2R}\cap E(\la^{1/p})^c}\left(|F|^{p-1}\la^\frac{1}{p}+[a(x)]^\frac{q-1}{q}|F|^{q-1}\la^\frac{1}{q}\right) \,dx
		\end{split}
	\end{align}
	for some constants $c_{\varepsilon}\equiv c_{\varepsilon}(n,N,p,q,\nu,L,\varepsilon)$ and $c\equiv c(n,N,p,q,\nu,L)$. Finally, collecting all the estimates obtained in \eqref{pre_p:9}, \eqref{pre_p:10} and \eqref{pre_p:16}, inserting them into \eqref{pre_p:2} and then choosing $\varepsilon$ small enough together with reabsorbing resulting terms, we arrive at the desired estimate \eqref{pre_p:1}. The proof is completed.
\end{proof}

Finally, we arrive at the stage of providing a Caccioppoli type inequality under the settings and notations of Subsection \ref{subsec3-1}.
\begin{lemma}
	\label{caccio}
	Under the settings and notations of Subsection \ref{subsec3-1}, there exist constants $\delta_{2}\in (1-1/q,1)$ and $c>0$ depending on $n,N,p,q,\alpha,\nu,L,[a]_{0,\alpha}$ such that if $\delta_{2}\leqslant \delta_0<\delta$, then
	\begin{align}
		\label{cacc:1}
		\begin{split}
			\fint_{B_{R}}[H(x,|\na u|)]^{\de}\ dx
			\leqslant c\fint_{B_{2R}}\left[H\left(x,\left|\frac{u-(u)_{B_{2R}}}{2R}\right|\right)\right]^{\de}\,dx
			+c\fint_{B_{2R}}\left[H(x,|F|)\right]^{\de}\ dx.
		\end{split}
	\end{align}
\end{lemma}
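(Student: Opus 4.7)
The plan is to convert Lemma~\ref{pre_p_reverse} into the integrated inequality \eqref{cacc:1} via a Fubini argument in $\la$. Write $Z := G^p$, so that $E(\la^{1/p}) = \{Z \le \la\}$. The first step is to establish pointwise comparisons: the identity $G^{p\ga} = M(G_p + G_q)$ together with $(A^\ga + B^\ga)^{1/\ga} \ge A + B$ for $A,B \ge 0$ and $\ga \in (0,1)$ gives, a.e.\ on $B_{2R}$, the bounds $H(x,|\na u|)$, $H(x, |(u-(u)_{B_{2R}})/(2R)|)$, $H(x, |F|) \le c\,Z$, together with $|\na u|, |F| \le Z^{1/p}$ and $[a(x)]^{1/q}|\na u|, [a(x)]^{1/q}|F| \le Z^{1/q}$.

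Next, I would multiply \eqref{pre_p:1} by $\la^{\de - 2}$ and integrate $\la$ over $[\la_0, \infty)$ with $\la_0 := c_\ga^p \La$. By Fubini the integrated LHS equals $\frac{1}{1-\de}\int_{B_R} H(x,|\na u|)\,[\la_0 \vee Z]^{\de-1}\,dx$. Each bad-set integrand such as $|\na u|^{p-1}\la^{1/p}$ integrates in $\la$ to $(\de - 1 + 1/p)^{-1}|\na u|^{p-1} Z^{\de - 1 + 1/p}$ on $\{Z > \la_0\}$; finiteness of these integrals requires $\de > 1 - 1/p$ and, for the $q$-analogue, $\de > 1 - 1/q$, which explains the restriction $\de_{2} > 1 - 1/q$. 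Applying $|\na u|^{p-1} \le Z^{(p-1)/p}$ and $[a(x)]^{(q-1)/q}|\na u|^{q-1} \le Z^{(q-1)/q}$ (with $F$-analogues), each bad-set contribution is dominated by a multiple of $\int_{B_{2R}} Z^\de\,dx$. The good-set lower-order terms integrate to $c\int_{B_{2R}} \tilde H^\de$ and $c\int_{B_{2R}} H(x,|F|)^\de$, where $\tilde H := H(x, |(u-(u)_{B_{2R}})/(2R)|)$, via $Z \ge c\,\tilde H,\,c\,H(x,|F|)$. Finally, the strong $(\de/\ga,\de/\ga)$ bound of Lemma~\ref{p_p} applied to $G_p + G_q$ (valid since $\de > \tilde\ga$ forces $\de/\ga > 1$), combined with Lemma~\ref{poinca_lem}, yields $\int_{B_{2R}} Z^\de \le c\int_{B_{2R}}[H(x,|\na u|)^\de + \tilde H^\de + H(x,|F|)^\de]\,dx$.

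The main obstacle is the circular $c\int_{B_{2R}} H(x,|\na u|)^\de$ term arising from this last step. The integrated LHS controls only $\int_{B_R} H(x,|\na u|)[\la_0 \vee Z]^{\de-1}$, which via $Z \ge cH$ yields an upper, rather than a lower, bound on $\int_{B_R} H^\de$, so the naive Fubini cannot close against the circular term. I expect the argument to be completed by re-running the Lipschitz-truncation construction of Subsection~\ref{subsec3-1} on arbitrary concentric radii $R \le \rho < \sigma \le 2R$, using cutoffs $\eta_{\rho,\sigma}$ with $\eta \equiv 1$ on $B_\rho$, $\mathrm{supp}\,\eta \subset B_\sigma$ and $|\na \eta| \le c/(\sigma - \rho)$, to derive the analogue of \eqref{pre_p:1} on $(B_\rho, B_\sigma)$, and then applying Lemma~\ref{iter_lemma} to absorb the resulting $\theta\int_{B_\sigma} H^\de$ term with $\theta < 1$. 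The smallness of $\theta$ is governed by $(1-\de)^{-1}$, $(\de - 1 + 1/q)^{-1}$ and the Lemma~\ref{p_p} constant, all of which remain uniformly bounded provided $\de_{2} \in (1 - 1/q, 1)$ is chosen sufficiently close to $1$.
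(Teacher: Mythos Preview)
Your Fubini-in-$\la$ integration of Lemma~\ref{pre_p_reverse} and the pointwise comparisons $H(x,|\na u|)\le cZ$, $[a(x)]^{1/q}|\na u|\le Z^{1/q}$, etc.\ are correct and mirror the paper, and you have correctly located the essential difficulty: the integrated left-hand side is $\int_{B_R}H(x,|\na u|)\,[\la_0\vee Z]^{\de-1}$, which by $Z\ge cH$ only \emph{upper}-bounds $\int_{B_R}[H(x,|\na u|)]^\de$. However, your proposed remedy---iteration over intermediate radii via Lemma~\ref{iter_lemma}---does not close this gap. The iteration lemma requires an inequality $h(\rho)\le\theta\,h(\sigma)+A$ for a \emph{single} function $h$; after Fubini you have $\int_{B_\rho}H\,Z^{\de-1}$ on the left, while the circular term on the right (produced by $\int Z^\de\le c\int H^\de+\text{good}$ via the maximal inequality) is $\int_{B_\sigma}H^\de$. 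These are different quantities, and by your own observation one dominates the other only in the wrong direction, so no choice of $h$ makes the hypotheses of Lemma~\ref{iter_lemma} available. Redoing the construction on $(B_\rho,B_\sigma)$ does not repair this mismatch; and your remark that the relevant constants ``remain uniformly bounded'' is beside the point (and in fact $(1-\de)^{-1}$ blows up), since smallness of $\theta$ is irrelevant when the two sides are not comparable.

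The paper closes the argument without any iteration, via two ideas you did not invoke. First, a level-set splitting: on the set $U_R:=\{x\in B_R:H(x,|\na u|)\ge(1-\de)[G_\La(x)]^p\}$, where $G_\La:=\max\{c_\ga\La^{1/p},G\}$, one has $H\,G_\La^{\de p-p}\ge(1-\de)^{1-\de}H^\de$, while on $B_R\setminus U_R$ one has $H^\de<(1-\de)^\de G_\La^{\de p}$; this converts the Fubini left-hand side into a genuine lower bound for $\int_{B_R}[H(x,|\na u|)]^\de$ at the cost of an additive error $(1-\de)^\de\int G_\La^{\de p}$. Second, every $\int Z^\de$-type contribution (from the bad-set integrals and from the complement of $U_R$) is bounded not through the raw maximal inequality---which is what generates your circular term---but through Lemma~\ref{p_lem}, giving $\int_{B_{2R}}G^{\de p}\le c\La^\de|B_{2R}|$. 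All dangerous terms thus carry the coefficient $c_*(1-\de)^\de/(1-q(1-\de))$, which vanishes as $\de\to1$; one chooses $\de_2$ so that this is $\le\tfrac12$ and then invokes the normalization \eqref{ge_la}, namely $\La^\de|B_R|=\int_{B_R}[H(x,|\na u|)+H(x,|F|)]^\de\,dx$, to absorb directly into the left-hand side.
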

\begin{proof}
	First applying Lemma~\ref{pre_p_reverse}, we have
	\begin{align}
		\label{cacc:2}
		J_{0}\leqslant c_{*}(J_1 + J_{2} + J_{3} + J_{4}),
	\end{align}
	for some constant $c_*\equiv c_*(n,N,p,q,\alpha,\nu,L,[a]_{0,\alpha})$, where
	\begin{align}
		\label{cacc:2_1}
		\begin{split}
		&J_{0}
		:=\int_{c_{\gamma}\La^{1/p}}^\infty \la^{-\kappa}\int_{B_{R}\cap E(\la^{1/p})}H(x,|\na u|)\ dx\ d(\la^\frac{1}{p}),
		\\&J_1:=\int_{c_{\gamma}\La^{1/p}}^\infty \la^{-\kappa}\int_{B_{2R}\cap E(\la^{1/p})^c}\left(|\na u|^{p-1}\la^\frac{1}{p} +[a(x)]^\frac{q-1}{q}|\na u|^{q-1}\la^\frac{1}{q}\right) \ dx\ d (\la^\frac{1}{p}),
        \\&
    	J_2:=\int_{c_{\gamma}\La^{1/p}}^\infty \la^{-\kappa}\int_{B_{2R}\cap E(\la^{1/p})}H\left(x,\left|\frac{u-(u)_{B_{2R}}}{2R}\right|\right)\ dx\ d(\la^\frac{1}{p}),
		\\& J_3:=\int_{c_{\gamma}\La^{1/p}}^\infty \la^{-\kappa}\int_{B_{2R}\cap E(\la^{1/p})^c}\left(|F|^{p-1}\la^\frac{1}{p} +[a(x)]^\frac{q-1}{q}|F|^{q-1}\la^\frac{1}{q}\right) \ dx\ d(\la^\frac{1}{p}),
		\\&J_4:=\int_{c_{\gamma}\La^{1/p}}^\infty \la^{-\kappa}\int_{B_{2R}\cap E(\la^{1/p})}H(x,|F|)\, dx\ d(\la^\frac{1}{p}).
	\end{split}
\end{align}
Here $\kappa:=(1-\de)+1/p$ and the constant $c_{\gamma}\equiv c_{\gamma}(n,N,p,q,[a]_{0,\alpha})$ has been determined in Lemma \ref{p_lem}. In the following, we shall deal with all the terms appearing in \eqref{cacc:2} defined in \eqref{cacc:2_1}. To go further, let us introduce auxiliary notations 
	\begin{align}
		\label{cacc:3}
		\begin{split}
			G_{\Lambda}(x)&:=\max\{c_{\gamma}\La^{1/p},G(x)\}
			\\&\text{and}
			\\U_{R}&:=\{x\in B_{R}\mid H(x,|\na u|) \ge(1-\de)[G_{\Lambda}(x)]^{p}\}.
		\end{split}
	\end{align}
	Recalling \eqref{level_set} and applying Fubini's theorem, we find 
	\begin{align}
		\label{cacc:4}
		\begin{split}
			J_{0} &=  \int_{c_{\gamma}\La^{1/p}}^\infty \la^{-(1-\de)-1/p}\int_{B_{R}}H(x,|\na u|)\rchi_{\left\{G(x)\leqslant \la^{1/p}\right\}}\ dx\ d(\la^\frac{1}{p})
			\\&
			=\int_{B_{R}}H(x,|\na u|)\int_{G_{\Lambda}(x)}^\infty \la^{-(1-\de)-1/p}\ d(\la^\frac{1}{p})\ dx
			\\&
			=\frac{1}{p(1-\de)}\int_{B_{R}}H(x,|\na u|)[G_{\Lambda}(x)]^{-p+\de p}\ dx,
		\end{split}
	\end{align}
	where function $G_{\Lambda}$ has been defined in \eqref{cacc:3}. On the other hand, we observe that 
	\begin{align}
		\label{cacc:5}
		\begin{split}
			\int_{B_{R}}[H(x,|\na u|)]^{\de}\ dx 
			&= 
			\int_{U_{R}}[H(x,|\na u|)]^{\de}\ dx+\int_{B_{R}\setminus U_{R}}[H(x,|\na u|)]^{\de}\ dx
			\\&
			\leqslant
			(1-\de)^{-(1-\de)}\int_{U_{R}}H(x,|\na u|)[G_{\Lambda}(x)]^{-p+\de p}\ dx
			\\&\quad+(1-\de)^{\de}\int_{B_{R}\setminus U_{R}}[G_{\Lambda}(x)]^{\de p}\ dx
			\\&
			\leqslant
			(1-\de)^{-(1-\de)}\int_{B_{R}}H(x,|\na u|)[G_{\Lambda}(x)]^{-p+\de p}\ dx
			\\&\quad+c(1-\de)^{\delta}\La^{\de}|B_{2R}|
		\end{split}
	\end{align}
	for some constant $c\equiv c(n,N,p,q,[a]_{0,\alpha})$, where we have used Lemma \ref{p_lem} and $U_{R}$ is the set introduced in \eqref{cacc:3}. Therefore, combining \eqref{cacc:4}-\eqref{cacc:5}, we conclude
	\begin{align}
		\label{cacc:6}
		\frac{1}{p}\left(\frac{1}{1-\de}\right)^\de\int_{B_{R}}[H(x,|\na u|)]^{\de}\ dx-c\La^{\de}|B_{2R}|\leqslant J_{0}.
	\end{align}
	We now deal with the terms $J_{k}$ for $k\in \{1,2,3,4\}$ appearing in \eqref{cacc:2_1}. In turn, we have 
	\begin{align}
		\label{cacc:7}
		\begin{split}
			J_{1} &= \int_{c_{\gamma}\La^{1/p}}^\infty \la^{-(1-\de)-1/p}\int_{B_{2R}\cap E(\la^{1/p})^c}|\na u|^{p-1}\la^\frac{1}{p}\ dx\ d(\la^\frac{1}{p})
			\\&
			\quad
			+
			\int_{c_{\gamma}\La^{1/p}}^\infty \la^{-(1-\de)-1/p}\int_{B_{2R}\cap E(\la^{1/p})^c}[a(x)]^\frac{q-1}{q}|\na u|^{q-1}\la^\frac{1}{q} \ dx\ d(\la^\frac{1}{p})
			\\&
			=: J_{11} + J_{12}.
		\end{split}
	\end{align}
	Therefore, applying Fubini's theorem and recalling the definition of the level set $E(\la^{1/p})$ in \eqref{level_set}, we see 
	\begin{align*}
		\begin{split}
			J_{11} &= \int_{c_{\gamma}\La^{1/p}}^\infty \la^{-(1-\de)-1/p}\int_{B_{2R}}|\na u|^{p-1}\la^\frac{1}{p}\rchi_{\left\{G(x)\ge\la^{1/p}\right\}}\ dx\ d(\la^\frac{1}{p})
			\\&
			\leqslant
			\int_{B_{2R}} |\na u|^{p-1}\int_{c_{\gamma}\La^{1/p}}^{G(x)}\la^{-(1-\de)}\ d(\la^\frac{1}{p})\,dx
			\\&
			\leqslant
			\frac{1}{1-p(1-\de)}\int_{B_{2R}}|\na u|^{p-1}[G(x)]^{1-p(1-\de)}\ dx
			\\&
			\leqslant
			\frac{1}{1-p(1-\delta)}\int_{B_{2R}}[G(x)]^{p\delta}\,dx,
		\end{split}
	\end{align*}
where we have used the definition of the function $G$ in \eqref{def_g} and the fact that 
	\begin{align*}
		|\na u(x)| \leqslant G(x)
		\quad\text{a.e. in } B_{2R}.
	\end{align*}
	Again using Fubini's theorem, we estimate $J_{12}$ in \eqref{cacc:7} as
	\begin{align}
		\label{cacc:10}
		\begin{split}
			J_{12} &= \int_{B_{2R}}[a(x)]^\frac{q-1}{q}|\na u|^{q-1}\int_{c_{\gamma}\La^{1/p}}^{G(x)} \la^{1/q-(1-\de)-1/p} \  d(\la^\frac{1}{p})\ dx
			\\&
			\leqslant
			\frac{1}{\frac{p}{q}-p(1-\de)}\int_{B_{2R}}([a(x)]^\frac{1}{q}|\na u|)^{q-1}[G(x)]^{\frac{p}{q}-p(1-\de)}\ dx
			\\&
			\leqslant
			\frac{1}{\frac{p}{q}-p(1-\de)}\int_{B_{2R}}[G(x)]^{\delta p}\,dx.
		\end{split}
	\end{align}
	In order to obtain the last inequality, we have used the fact that
	\begin{align*}
		a(x)|\na u(x)|^{q} \leqslant [G(x)]^{p}
		\quad\text{a.e. in }B_{2R}.
	\end{align*}
	Then combining the estimates \eqref{cacc:7}-\eqref{cacc:10}, we conclude
	\begin{align}
		\label{cacc:12}
		J_{1} \leqslant \frac{c}{1-q(1-\delta)}\int_{B_{2R}}[G(x)]^{\delta p}\,dx
	\end{align}
	for some constant $c\equiv c(p,q)$. Now we continue to estimate the next terms in \eqref{cacc:2_1}. In turn, again using Fubini's theorem, we see
	\begin{align}
		\label{cacc:13}
		\begin{split}
			&J_{2}= \int_{B_{2R}}H\left(x,\left|\frac{u-(u)_{B_{2R}}}{2R}\right|\right)\int_{G_{\Lambda}(x)}^\infty \la^{-\kappa}\ d(\la^\frac{1}{p})\ dx
			\\&
			\leqslant
			\int_{B_{2R}}H\left(x,\left|\frac{u-(u)_{B_{2R}}}{2R}\right|\right)\int_{G(x)}^\infty \la^{-\kappa}\ d(\la^\frac{1}{p})\ dx
			\\&
			\leqslant
			\frac{1}{p(1-\de)}\int_{B_{2R}}H\left(x,\left|\frac{u-(u)_{B_{2R}}}{2R}\right|\right)[G(x)]^{-p+\de p}\ dx
			\\&
			\leqslant
			\frac{1}{p(1-\delta)}\int_{B_{2R}}H\left(x,\left|\frac{u-(u)_{B_{2R}}}{2R}\right|\right)^{\delta}\,dx.
		\end{split}
	\end{align}
	The last inequality follows from the fact that
	\begin{align}
		\label{cacc:14}
		\left[H\left(x,\left|\frac{u-(u)_{B_{2R}}}{2R}\right|\right)\right]^{\gamma} \leqslant
		[G(x)]^{p\gamma}
		\quad\text{a.e. in }B_{2R}.
	\end{align}
	Finally, we shall treat the remaining terms $J_{3}$ and $J_{4}$ in  \eqref{cacc:2_1}. Essentially, arguing similarly as we have done in \eqref{cacc:7}-\eqref{cacc:12} for $I_{3}$ and in \eqref{cacc:13}-\eqref{cacc:14} for $I_{4}$, we conclude 
	\begin{align}
		\label{cacc:15}
		\begin{split}
			J_{3} + J_{4} \leqslant
			\frac{c}{1-q(1-\delta)}\int_{B_{2R}}[G(x)]^{\de p}\ dx
			+\frac{1}{p(1-\de)}\int_{B_{2R}}\left[H\left(x,|F|\right)\right]^{\de}\ dx
		\end{split}
	\end{align}
	for some constant $c\equiv c(p,q)$. Inserting the estimates obtained in \eqref{cacc:6}, \eqref{cacc:12}, \eqref{cacc:13} and \eqref{cacc:15} into \eqref{cacc:2} and applying Lemma \ref{p_lem},  we find 
	
	\begin{align*}
		\begin{split}
			&\left(\frac{1}{1-\de}\right)^\de\int_{B_{R}}[H(x,|\na u|)]^{\de}\ dx-c\La^{\de p}|B_{2R}|
			\\&
			\leqslant
			\frac{c}{1-q(1-\delta)}\int_{B_{2R}}[G(x)]^{\de p}\ dx
			+\frac{c}{1-\de}\int_{B_{2R}}\left[H\left(x,\left|\frac{u-(u)_{2R}}{2R}\right|\right)+H(x,|F|)\right]^{\de}\ dx
		\end{split}
	\end{align*} 
	for some constant $c\equiv c(n,N,p,q,[a]_{0,\alpha})$. At this moment, we apply Lemma \ref{p_lem} in order to have 
	\begin{align*}
		\begin{split}
			\int_{B_{R}}\left[H(x,|\na u|)\right]^{\de}\ dx
			&\leqslant
			\frac{c_{*}(1-\delta)^{\delta}}{1-q(1-\delta)}\La^{\de p}|B_{R}|
			\\&\quad+c_{*}(1-\delta)^{\delta-1}\int_{B_{2R}}\left[H\left(x,\left|\frac{u-(u)_{2R}}{2R}\right|\right)\right]^\de\ dx
			\\&\quad+c_{*}(1-\delta)^{\delta-1}\int_{B_{2R}}\left[H\left(x,|F|\right)\right]^{\de}\ dx
		\end{split}
	\end{align*}
	for some constant $c_{*}\equiv c_{*}(n,N,p,q,\alpha,\nu,L,[a]_{0,\alpha})$. Now we take $\de_{2}\equiv \delta_{2}(n,N,p,q,\alpha,\nu,L,[a]_{0,\alpha})$ close enough to $1$ with $\delta_{2}\leqslant \delta_0 < \delta<1$ so that
	\begin{align*}
		\frac{c_{*}(1-\delta)^{\delta}}{1-q(1-\delta)} \leqslant \frac{c_{*}(1-\delta_{2})^{\delta_{2}}}{1-q(1-\delta_{2})} \leqslant \frac{1}{2}.
	\end{align*}
	Taking into account the content of the last display together with recalling \eqref{ge_la} and the fact that there exists a constant $c$ with
	\begin{align*}
		(1-t)^{t-1} = e^{(t-1)\log(1-t)} \leqslant
		e^{|(t-1)\log(1-t)|} < c
		\text{ for all } t\in (0,1),
	\end{align*}
	we find a Caccioppoli type inequality of \eqref{cacc:1}. The proof is completed.
\end{proof}

\begin{lemma}[Reverse H\"older inequality]
	\label{reverse}
	Under the assumptions and conclusions of Lemma \ref{caccio}, there exists a constant $\delta_1\equiv \delta_1(n,N,p,q,\alpha,\nu,L,[a]_{0,\alpha})$ with $\delta_2 < \delta_1 <1$ such that
	if $\de_1\leqslant \de_0<\de<1$, then there holds
	\begin{align}
		\label{reverse:0}
		\begin{split}
			\fint_{B_{R}}\left[H(x,|\na u|)\right]^{\de}\ dx
			\leqslant c\left(\fint_{B_{2R}}\left[H\left(x,|\na u|\right)\right]^{\de_{1}}\ dx\right)^\frac{\de}{\de_{1}}
			+c\fint_{B_{2R}}\left[H\left(x,|F|\right)\right]^{\de}\ dx
		\end{split}
	\end{align}
	for some constant $c\equiv c(n,N,p,q,\alpha,\nu,L,[a]_{0,\alpha})$.
\end{lemma}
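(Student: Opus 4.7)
The strategy is to post-process the Caccioppoli inequality of Lemma~\ref{caccio} by applying a Sobolev--Poincar\'e inequality adapted to the double phase functional, thereby producing the desired right-hand side with the lower exponent $\delta_1$ in \eqref{reverse:0}. All non-$F$ terms on the right-hand side of Lemma~\ref{caccio} come from the oscillation $\fint_{B_{2R}}[H(x,|(u-(u)_{B_{2R}})/(2R)|)]^\delta\,dx$, so the whole job reduces to estimating this oscillation in terms of $\bigl(\fint_{B_{2R}}[H(x,|\nabla u|)]^{\delta_1}\bigr)^{\delta/\delta_1}$ with $\delta_1<\delta$.

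Following the scheme of Lemma~\ref{poinca_lem}, I would split the oscillation into the pure-$p$ piece $\fint_{B_{2R}}|(u-(u)_{B_{2R}})/(2R)|^{p\delta}\,dx$ and the weighted-$q$ piece $\fint_{B_{2R}}[a(x)]^{\delta}|(u-(u)_{B_{2R}})/(2R)|^{q\delta}\,dx$. The pure-$p$ piece is immediately handled by the classical Sobolev--Poincar\'e inequality, giving a bound $c\bigl(\fint_{B_{2R}}|\nabla u|^{p\delta_1}\,dx\bigr)^{\delta/\delta_1}$ as soon as $\delta_1\geqslant n\delta/(n+p\delta)$. For the weighted-$q$ piece I would use the H\"older continuity of $a(\cdot)$ to write $a(x)\leqslant a(x_m)+c[a]_{0,\alpha}R^{\alpha}$ with $a(x_m):=\inf_{B_{2R}}a$, producing two sub-terms.

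The $a(x_m)$-sub-term is the easier one: using the identity $[a(x_m)]^{\delta}=([a(x_m)]^{\delta_1})^{\delta/\delta_1}$, the Sobolev--Poincar\'e inequality with exponent $q\delta_1$, and the pointwise bound $a(x_m)\leqslant a(x)$, it is controlled by $c\bigl(\fint_{B_{2R}}[H(x,|\nabla u|)]^{\delta_1}\,dx\bigr)^{\delta/\delta_1}$. The delicate residual sub-term $[a]_{0,\alpha}^{\delta}R^{\alpha\delta}\fint_{B_{2R}}|(u-(u)_{B_{2R}})/(2R)|^{q\delta}\,dx$ requires a cross Sobolev--Poincar\'e with the lower exponent $p\delta_1$: provided $\delta_1$ is close enough to $1$ so that $q\delta\leqslant(p\delta_1)^{*}$ (which needs only $1/p-1/q<1/n$, a strict consequence of \eqref{exp}), this sub-term is bounded by $cR^{\alpha\delta}\bigl(\fint_{B_{2R}}|\nabla u|^{p\delta_1}\bigr)^{q\delta/(p\delta_1)}$, which I would factorize as $\bigl(\fint|\nabla u|^{p\delta_1}\bigr)^{\delta/\delta_1}$ times the extra factor $R^{\alpha\delta}\bigl(\fint|\nabla u|^{p\delta_1}\bigr)^{(q-p)\delta/(p\delta_1)}$.

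The main obstacle is showing that this extra factor is universally bounded. Applying H\"older's inequality to downgrade $p\delta_1$ to $p\delta$ in the extra factor turns it (up to volume normalizations) into $R^{\alpha\delta-n(q-p)/p}\bigl(\int_{\Omega}|\nabla u|^{p\delta}\bigr)^{(q-p)/p}$, which is precisely the $q\delta$-th power of the smallness condition \eqref{a2} and hence bounded by $1$. Choosing now $\delta_1\equiv\delta_1(n,N,p,q,\alpha,\nu,L,[a]_{0,\alpha})$ with $\delta_2<\delta_1<1$ sufficiently close to $1$ so that all the Sobolev thresholds $n\delta/(n+p\delta)$, $n\delta/(n+q\delta)$ and $nq\delta/(p(n+q\delta))$ lie below $\delta_1$ (which, together with \eqref{pre_a2}, is exactly where \eqref{exp} enters crucially), and inserting these estimates into Lemma~\ref{caccio}, I arrive at \eqref{reverse:0}.
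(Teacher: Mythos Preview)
Your proposal is correct and follows essentially the same route as the paper: split $a(x)\leqslant\inf_{B_{2R}}a+[a]_{0,\alpha}(4R)^{\alpha}$, handle the $\inf a$-piece and the pure $p$-piece by the standard Sobolev--Poincar\'e inequality with exponent $\de_1<\de$, and control the residual $R^{\alpha}$-piece via the cross Sobolev--Poincar\'e $q\de\to p\de_1$ together with the smallness condition \eqref{a2}. The only cosmetic difference is that the paper first applies Jensen's inequality $\fint[H]^{\de}\,dx\leqslant(\fint H\,dx)^{\de}$ and does the splitting at the unpowered level, while you split directly at the $\de$-level; the ensuing computations (in particular the factorization of the extra factor and its reduction to \eqref{a2} raised to the power $q\de$) are identical to the paper's display \eqref{reverse:2}. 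One verbal slip: passing from $p\de_1$ to $p\de$ via H\"older is an \emph{upgrade} of the exponent (since $\de_1<\de$), not a downgrade, but the inequality you use is in the right direction.
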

\begin{proof}
	Applying the Sobolev-Poincar\'e inequality and observing that $\frac{q}{p} < 1^{*}=\frac{n}{n-1}$, there exists a constant $\delta_{1}\equiv \delta_1(n,N,p,q,\alpha,\nu,L,[a]_{0,\alpha})\in (\delta_{2},1)$ such that 
	\begin{align}
		\label{reverse:1}
		\begin{split}
			&\fint_{B_{2R}}\left(\left|\frac{u-(u)_{B_{2R}}}{2R}\right|^{p}+\inf_{B_{2R}}a(x)\left|\frac{u-(u)_{B_{2R}}}{2R}\right|^{q}\right)\ dx
			\\&
			\leqslant
			c\left[\fint_{B_{2R}}\left(|\na u|^{\de_{1}p}+\left[\inf_{B_{2R}}a(x)\right]^{\de_{1}}|\na u|^{\de_{1} q}\right)\ dx\right]^\frac{1}{\de_{1}}
			\\&\leqslant
			c\left(\fint_{B_{2R}}\left[H\left(x,|\na u|\right)\right]^{\de_{1}}\ dx\right)^\frac{1}{\de_{1}}
		\end{split}
	\end{align}
	for some constant $c\equiv c(n,N,p,q,\alpha,\nu,L,[a]_{0,\alpha})$. In addition, if $\delta_{1}\leqslant \delta_0 < \delta$, then we have
	\begin{align}
		\label{reverse:2}
		\begin{split}
			&R^{\alpha}\fint_{B_{2R}}\left|\frac{u-(u)_{B_{2R}}}{2R}\right|^{q}\ dx 
			\leqslant 
			cR^{\alpha}\left(\fint_{B_{2R}}|\na u|^{\de_{1} p}\ dx\right)^\frac{q}{\de_{1} p}
			\\&
			\leqslant
			cR^{\alpha}\left(\fint_{B_{2R}}|\na u|^{\de p}\ dx\right)^\frac{q-p}{\de p}\left(\fint_{B_{2R}}|\na u|^{\de_{1} p}\ dx\right)^{\frac{1}{\de_{1}}}
			\\&
			\leqslant
			cR^{\alpha-\frac{n(q-p)}{\de p}}\left(\int_{B_{2R}}|\na u|^{\de p}\ dx\right)^\frac{q-p}{\de p}\left(\fint_{B_{2R}}|\na u|^{\de_{1} p}\ dx\right)^\frac{1}{\de_{1}}
			\\&	 	
			\leqslant
			c\left(\fint_{B_{2R}}|\na u|^{\de_{1} p}\ dx\right)^\frac{1}{\de_{1}}
		\end{split}
	\end{align}
	for some constant $c\equiv c(n,N,p,q,\alpha,\nu,L,[a]_{\alpha})$, where we have also used H\"older's inequality and \eqref{a2}. Therefore, if $\delta_1 \leqslant \delta_0< \delta$, then we are able to apply Lemma~\ref{caccio} to obtain 
	\begin{align*}
		\begin{split}
			\fint_{B_{R}}\left[H(x,|\na u|)\right]^{\de}\ dx
			\leqslant
			c\fint_{B_{2R}}\left[H\left(x,\left|\frac{u-(u)_{B_{2R}}}{2R}\right|\right)\right]^{\de}\ dx
			+c\fint_{B_{2R}}\left[H\left(x,|F|\right)\right]^{\de}\ dx
		\end{split}
	\end{align*}
	for some constant $c\equiv c(n,N,p,q,\nu,L,[a]_{0,\alpha})$. Then using the estimates \eqref{reverse:1}, \eqref{reverse:2} and the H\"older continuity of the coefficient function $a(\cdot)$, we see that if $\delta_1\leqslant \delta_0 <\delta$, then we have 
	\begin{align*}
		\begin{split}
			\fint_{B_{2R}}\left[H\left(x,\left|\frac{u-(u)_{B_{2R}}}{2R}\right|\right)\right]^{\de}\ dx
			&
			\leqslant
			\left(\fint_{B_{2R}}H\left(x,\left|\frac{u-(u)_{B_{2R}}}{2R}\right|\right)\ dx\right)^{\delta}
			\\&
			\leqslant
			c\left(\fint_{B_{2R}}\left|\frac{u-(u)_{B_{2R}}}{2R}\right|^{p}+\inf_{B_{2R}}a(x)\left|\frac{u-(u)_{B_{2R}}}{2R}\right|^{q}\ dx\right)^{\delta}
			\\&
			\quad 
			+
			c\left(R^{\alpha}\fint_{B_{2R}}\left|\frac{u-(u)_{B_{2R}}}{2R}\right|^{q}\ dx \right)^{\delta}
			\\&
			\leqslant
			c\left(\fint_{B_{2R}}\left(|\na u|^p+a(x)|\na u|^q\right)^{\de_{1}}\ dx\right)^\frac{\delta}{\delta_{1}}.
		\end{split}
	\end{align*}
	We combine the last two displays to obtain \eqref{reverse:0}. The proof is completed.
\end{proof}

\section{Proof of Theorem~\ref{main}}
Finally, we provide the proof of our main Theorem \ref{main}. It is based on the Gehring lemma \cite{Ge,ME} and the stopping time argument and covering lemma used in \cite{BD,KLe} under the double phase settings. Let $u\in W^{1,1}(\Omega,\mathbb{R}^{N})$ be a very weak solution to the system \eqref{me} with 
\begin{align*}
	\int_{\Omega}\left(|\na u|^{p} + a(x)|\na u|^{q} \right)^{\delta}\,dx < \infty
\end{align*}
for some $\delta\in (\delta_0,1)$, where $\delta_{0}$ is a fixed constant satisfying \eqref{pre_de}-\eqref{pre_a2}, which will be determined at the end of the proof. The remaining part of the proof consists of two main steps.

\textbf{Step 1: Stopping time argument and covering.} Let $B_{2r}\equiv B_{2r}(x_0)\subset \Omega$ be a fixed ball with $2r\leqslant R_0$, where the size of $R_0$ is selected to satisfy
\begin{align}
	\label{pfmain:2}
	R_0^{\frac{\alpha}{q}-\frac{n}{\de p}\left(1-\frac{p}{q}\right)}\left(\int_{\Om}|\na u|^{\de p}\ dx\right)^{\frac{1}{\de p}\left(1-\frac{p}{q}\right)}\leqslant 1.
\end{align}
We select radii $r_1,r_2$ such that $r\leqslant r_1 < r_2 \leqslant 2r$ and consider the following level sets
\begin{align*}
	E(\Lambda,\varrho):= \left\{x\in B_{\varrho}(x_0) : H\left(x, |\na u|\right) \leqslant \Lambda  \right\},
\end{align*}
\begin{align}
	\label{pfmain:4}
	S(\Lambda,\varrho):= \left\{x\in B_{\varrho}(x_0) : H\left(x,|\na u|\right) > \Lambda  \right\}
\end{align}
and 
\begin{align}
	\label{pfmain:5}
	T(\Lambda,\varrho):= \left\{x\in B_{\varrho}(x_0) : H\left(x,|F|\right) > \Lambda  \right\}
\end{align}
for every $r\leqslant \varrho \leqslant 2r$ and $\Lambda > 0$. Also, we define the quantity 
\begin{align*}
	\Phi(B_{\varrho}(y)):= \fint_{B_{\varrho}(y)}\left[H(x,|\na u|)+H(x,|F|)\right]^\de\ dx,
\end{align*}
whenever $B_{\varrho}(y)\subset B_{2r}$ is a ball. Then we observe that 
\begin{align*}
	\lim\limits_{s\to 0^{+}} \Phi(B_{s}(y))>\Lambda^{\delta}
	\text{ for a.e } y\in S(\Lambda^{\delta},\varrho)\text{ with } r\leqslant \varrho \leqslant 2r
\end{align*}
and that if $y\in B_{r_1}$, then we have, for every $\varrho\in \left((r_2-r_1)/20, r_2-r_1 \right)$,
\begin{align*}
	\begin{split}
		\Phi(B_{\varrho}(y)) 
		&\leqslant
		\left(\frac{2r}{\varrho} \right)^{n}\left[1+\fint_{B_{2r}}\left[H(x,|\na u|) + H(x,|F|)\right]^{\delta
		}\,dx \right]
		\\&
		\leqslant
		\left(\frac{40r}{r_2-r_1} \right)^{n}\Lambda_{0}^{\delta},
	\end{split}
\end{align*}
where
\begin{align}
	\label{pfmain:9}
	\Lambda_{0}^{\delta}:= 1+\fint_{B_{2r}}\left[H(x,|\na u|) + H(x,|F|) \right]^{\delta
	}\,dx.
\end{align}
From now on, we shall always consider the values of $\Lambda$ satisfying 
\begin{align*}
	\Lambda^{\delta} > \left(\frac{40r}{r_2-r_1} \right)^{n}\Lambda_{0}^{\delta}.
\end{align*}
Then for almost every $y\in S(\Lambda^{\delta},r_1) $, there exists an \textbf{exit time} radius $\varrho_{y} < (r_2-r_1)/20$ such that 
\begin{align}
	\label{pfmain:11}
	\Phi(B_{\varrho_{y}}(y)) = \Lambda^{\delta}
	\quad\text{and}\quad
	\Phi(B_{\varrho}(y)) < \Lambda^{\delta}
	\text{ for every } \varrho\in (\varrho_{y},r_2-r_1).
\end{align}
Therefore, the family $\{B_{\varrho_{y}}(y)\}$ covers $S(\Lambda^{\delta},r_1)$ up to a negligible set. Observing that $\varrho_{y} \leqslant (r_2-r_1)/20 \leqslant R_0$ for almost every $y\in S(\Lambda^{\delta},r_1)$ and recalling \eqref{pfmain:2}-\eqref{pfmain:11}, we are in a position to apply Lemma \ref{reverse} with $B_{2R}$ replaced by $B_{2\varrho_{y}}(y)$, which implies that there exists an exponent $\delta_{1}\equiv \delta_{1}(n,N,p,q,\alpha,\nu,L,[a]_{0,\alpha})\in(1-1/q,1)$ such that if $\delta_{1}\leqslant \delta_{0}<\delta<1$, then there holds
\begin{align*}
	\begin{split}
		&\fint_{B_{\varrho_y}(y)}\left[H(x,|\na u|) + H(x,|F|)\right]^{\de}\ dx
		\\&
		\leqslant 
		c\left(\fint_{B_{2\varrho_y}(y)}\left[H\left(x,|\na u|\right)\right]^{\de_{1}}\ dx\right)^\frac{\de}{\de_{1}}
		+c\fint_{B_{2\varrho_{y}}(y)}\left[H\left(x,|F|\right)\right]^{\de}\ dx
	\end{split}
\end{align*}
for some constant $c=c(n,N,p,q,\alpha,\nu,L,[a]_{\alpha})$ and almost every $y\in S(\Lambda^{\delta},r_1)$. Then it follows from \eqref{pfmain:11} and H\"older's inequality that
\begin{align}
	\label{pfmain:13}
	\begin{split}
		\fint_{B_{\varrho_y}(y)}\left[ H(x,|\na u|) + H(x,|F|)\right]^{\de}\ dx
		&\leqslant 
		c_{*}\Lambda^{\delta-\delta_{1}}\fint_{B_{2\varrho_y}(y)}\left[H\left(x,|\na u|\right)\right]^{\de_{1}}\ dx
		\\&\quad+c_{*}\fint_{B_{2\varrho_{y}}(y)}\left[H\left(x,|F|\right)\right]^{\de}\ dx
		\\&
		=: c_{*}\left(I_{1} + I_{2} \right)
	\end{split}
\end{align}
for a constant $c_{*}\equiv c_{*}(n,N,p,q,\alpha,\nu,L,[a]_{\alpha})$. Now we shall deal with the terms appearing in the last display. In turn, for any $\theta\in (0,1]$, we have
\begin{align*}
	\begin{split}
		I_{1} 
		&\leqslant
		\Lambda^{\delta-\delta_{1}}\left( \left( \theta\Lambda^{\delta} \right)^{\frac{\delta_{1}}{\delta}} + \frac{1}{|B_{2R_{y}}(y)|}\int_{S(\theta\Lambda^{\delta},r_{2})\cap B_{2\varrho_{y}}(y)}\left[H\left(x,|\na u|\right)\right]^{\delta_{1}}\,dx \right)	
		\\&
		\leqslant
		\theta^{\delta_{1}}\Lambda^{\delta} + 	 \frac{\Lambda^{\delta-\delta_{1}}}{|B_{2R_{y}}(y)|}\int_{S(\theta\Lambda^{\delta},r_{2})\cap B_{2\varrho_{y}}(y)}\left[H\left(x,|\na u|\right)\right]^{\delta_{1}}\,dx
	\end{split}
\end{align*}
and 
\begin{align*}
	I_{2} \leqslant
	\theta\Lambda^{\delta} + \frac{1}{|B_{2\varrho_{y}}(y)|}\int_{T(\theta\Lambda^{\delta},r_2)\cap B_{2\varrho_{y}}(y)}\left[H\left(x,|F|\right)\right]^{\delta}\,dx,
\end{align*}
where the sets $S$ and $T$ have been defined in \eqref{pfmain:4}-\eqref{pfmain:5}. Inserting the estimates obtained in the last two displays into \eqref{pfmain:13}, using \eqref{pfmain:11} and reabsorbing terms, we find 
\begin{align}
	\label{pfmain:16}
	\begin{split}
		&(1-c_{*}\theta^{\delta_{1}}-c_{*}\theta)\fint_{B_{\varrho_y}(y)}\left[ H(x,|\na u|)+H(x,|F|)\right]^{\de}\ dx
		\\&
		\leqslant
		c_{*}\frac{\Lambda^{\delta-\delta_{1}}}{|B_{2R_{y}}(y)|}\int_{S(\theta\Lambda^{\delta},r_{2})\cap B_{2\varrho_{y}}(y)}\left[H(x,|\na u|)\right]^{\delta_{1}}\,dx
		\\&
		\quad
		+
		c_{*}
		\frac{1}{|B_{2\varrho_{y}}(y)|}\int_{T(\theta\Lambda^{\delta},r_2)\cap B_{2\varrho_{y}}(y)}\left[H(x,|F|)\right]^{\delta}\,dx
	\end{split}
\end{align}
for the same constant $c_{*}$ as appeared in \eqref{pfmain:13}. We take $\theta\equiv \theta(n,N,p,q,\alpha,\nu,L,[a]_{0,\alpha})\in (0,1]$ small enough such that 
\begin{align}
	\label{pfmain:16_1}
	1-c_{*}\theta^{\delta_{1}} - c_{*}\theta \geqslant 1/2.
\end{align}
From \eqref{pfmain:16} with the choice of $\theta$ as in the last display, we conclude that
\begin{align*}
	\begin{split}
		\fint_{B_{\varrho_y}(y)}\left[H(x,|\na u|)+H(x,|F|)\right]^{\de}\ dx
		&\leqslant
		c\frac{\Lambda^{\delta-\delta_{1}}}{|B_{2R_{y}}(y)|}\int_{S(\theta\Lambda^{\delta},r_{2})\cap B_{2\varrho_{y}}(y)}\left[H(x,|\na u|)\right]^{\delta_{1}}\,dx
		\\&
		\quad
		+
		c\frac{1}{|B_{2\varrho_{y}}(y)|}\int_{T(\theta\Lambda^{\delta},r_2)\cap B_{2\varrho_{y}}(y)}\left[H(x,|F|)\right]^{\delta}\,dx,
	\end{split}
\end{align*}
for some constant $c\equiv c(n,N,p,q,\alpha,\nu,L,[a]_{0,\alpha})$, whenever $y\in S(\Lambda^{\delta},r_{1})$ in the sense of almost everywhere and $\delta_{1}\equiv\delta_{1}(n,N,p,q,\alpha,\nu,L,[a]_{0,\alpha})\in (1-1/q,1)$ is a constant such that $\delta_{1}\leqslant \delta_{0}$. Now we again use \eqref{pfmain:11} together with the last display to obtain 
\begin{align}
	\label{pfmain:18}
	\begin{split}
		\fint_{10B_{\varrho_{y}}(y)}\left[H\left(x,|\na u|\right)\right]^{\delta}\,dx
		&\leqslant
		\Lambda^{\delta}
		= \fint_{B_{\varrho_{y}}(y)}\left[ H(x,|\na u|)+H(x,|F|)\right]^{\de}\ dx
		\\&
		\leqslant
		c\frac{\Lambda^{\delta-\delta_{1}}}{|B_{2R_{y}}(y)|}\int_{S(\theta\Lambda^{\delta},r_{2})\cap B_{2\varrho_{y}}(y)}\left[H\left(x,|\na u|\right)\right]^{\delta_{1}}\,dx
		\\&
		\quad
		+
		c\frac{1}{|B_{2\varrho_{y}}(y)|}\int_{T(\theta\Lambda^{\delta},r_2)\cap B_{2\varrho_{y}}(y)}\left[H\left(x,|F|\right)\right]^{\delta}\,dx
	\end{split}
\end{align}
for a constant $c\equiv c(n,N,p,q,\alpha,\nu,L,[a]_{0,\alpha})$, where $\theta\in (0,1)$ is the number given in \eqref{pfmain:16_1}. On the other hand, applying Vitali's covering lemma, there exists a countable family of disjoint balls $\{B_{2\varrho_{y_{i}}}(y_i)\}_{i\in \mathbb{N}}\equiv \{2B_{i}\}_{i\in\mathbb{N}}$
such that 
\begin{align*}
	S(\La^\de,r_1)\subset \bigcup_{i\in \mathbb{N}}10B_i\cup \text{ negligible set }\subset B_{r_2}
\end{align*}
and that
\begin{align*}
	\Phi(B_{\varrho_{y_{i}}}(y_{i})) = \Lambda^{\delta}
	\quad\text{and}\quad
	\Phi(B_{\varrho}(y_{i})) < \Lambda^{\delta}
	\text{ for every } \varrho\in (\varrho_{y_{i}},r_2-r_1)
\end{align*}
for every $i\in\mathbb{N}$.
Therefore, it follows from the resulting estimate of \eqref{pfmain:18} that
\begin{align*}
	\begin{split}
		\int_{S(\La^\de,r_1)}\left[H(x,|\na u|)\right]^{\de}\ dx
		&=\sum_{i\in\mathbb{N}}\int_{10 B_i}\left[H(x,|\na u|)\right]^{\de}\ dx
		\\&
		\leqslant
		c\sum_{i\in\mathbb{N}}\La^{\de-\de_{1}}\int_{S(\theta\La^\de,r_2)\cap 2B_i}\left[H\left(x,|\na u|\right)\right]^{\de_{1}}\ dx
		\\&
		\quad
		+
		c\sum_{i\in\mathbb{N}}\int_{T(\theta\La^\de,r_2)\cap 2B_i}\left[H\left(x,|F|\right)\right]^{\de}\ dx
		\\&
		\leqslant
		c\La^{\de-\de_1}\int_{S(\theta\La^\de,r_2)}\left[H\left(x,|\na u|\right)\right]^{\de_1}\ dx
		\\&\quad+
		c\int_{T(\theta\La^\de,r_2)}\left[H\left(x,|F|\right)\right]^{\de}\ dx
	\end{split}
\end{align*}
for some constant $c\equiv c(n,N,p,q,\alpha,\nu,L,[a]_{0,\alpha})$. Taking into account the last display together with the observation that
\begin{equation*}
	\begin{split}
		\int_{S(\theta\La^{\de},r_1)\setminus S(\La^\de,r_1)}\left[H(x,|\na u|)\right]^{\de}\ dx
		\leqslant \La^{\de-\de_{1}}\int_{S(\theta\La^{\de},r_1)\setminus S(\La^\de,r_1)}\left[H(x,|\na u|)\right]^{\de_{1}}\ dx
	\end{split}
\end{equation*}
for any $\La^\de>\left(\frac{40r}{r_2-r_1} \right)^{n}\La_0^\de$, we find 
\begin{align*}
	\begin{split}
		\int_{S(\theta\La^\de,r_1)}\left[H\left(x,|\na u|\right)\right]^{\de}\ dx
		&= 
		\int_{S(\La^\de,r_1)}\left[H\left(x,|\na u|\right)\right]^{\de}\ dx
		\\&
		\quad
		+
		\int_{S(\theta\La^{\de},r_1)\setminus S(\La^\de,r_1)}\left[H\left(x,|\na u|\right)\right]^{\de}\ dx
		\\&
		\leqslant
		c\La^{\de-\de_1}\int_{S(\vartheta\La^\de,r_2)}\left[H\left(x,|\na u|\right)\right]^{\de_1}\ dx
		\\&
		\quad
		+
		c\int_{T(\vartheta\La^\de,r_2)}\left[H\left(x,|F|\right)\right]^{\de}\ dx
	\end{split}
\end{align*}
for some constant $c\equiv c(n,N,p,q,\alpha,\nu,L,[a]_{0,\alpha})$. 
In particular, we have 
\begin{align}
	\label{pfmain:23}
	\begin{split}
		\int_{S(\La^\de,r_1)}\left[H\left(x,|\na u|\right)\right]^{\de}\ dx
		&\leqslant
		c\La^{\de-\de_1}\int_{S(\La^\de,r_2)}\left[H\left(x,|\na u|\right)\right]^{\de_1}\ dx
		\\&
		\quad
		+ c\int_{T(\La^\de,r_2)}\left[H\left(x,|F|\right)\right]^{\de}\ dx
	\end{split}
\end{align}
for some constant $c\equiv c(n,N,p,q,\alpha,\nu,L,[a]_{0,\alpha})$, whenever $\La^\de> \left(\frac{40r}{r_2-r_1} \right)^{n}\La_0^\de$.

\textbf{Step 2: Integration and iteration.} We shall integrate on level sets and use a standard truncation argument to guarantee that the quantities involved are finite. For any fixed constant $k>0$, we denote by
\begin{align}
	\label{pfmain:24}
	\begin{split}
		H_{k}(x,|z|)&:= \min\{H(x,|z|),k\}\quad (x\in\Omega, z\in \RR,\RR^{N} \text{ or } \RR^{nN})
		\\&
		\text{and}
		\\
		S_{k}(\Lambda,\varrho)&:= \left\{x\in B_{\varrho}(x_0) : H_{k}(x,|\na u|)>\Lambda \right\},\quad
		r\leqslant \varrho\leqslant 2r.
	\end{split}
\end{align} 
Clearly, by the definition of the set $S_{k}$ in \eqref{pfmain:24}, we observe that 
\begin{equation}
	\label{pfmain:26}
	S_{k}(\La,\rho)=
	\begin{cases}
		\emptyset&\text{ if }\La>k,\\
		S(\La,\rho)&\text{ if }\La\leqslant k.
	\end{cases}
\end{equation}
Recall that the estimate \eqref{pfmain:23} can be written as
\begin{align*}
	\begin{split}
		\int_{S(\La^\de,r_1)}\left[H\left(x,|\na u|\right)\right]^{\de}\ dx
		&\leqslant
		c\La^{\de-\de_1}\int_{S(\La^\de,r_2)}\left[H\left(x,|\na u|\right)\right]^{\de_1}\ dx
		+ c\int_{T(\La^\de,r_2)}\left[H\left(x,|F|\right)\right]^{\de}\ dx.
	\end{split}
\end{align*}
Then using \eqref{pfmain:26} in the last display, we deduce that 
\begin{align}
	\label{pfmain:28}
	\begin{split}
		&\int_{S_{k}(\La^\de,r_1)}\left[H_{k}\left(x,|\na u|\right)\right]^{\de-\delta_1}\left[H\left(x,|\na u|\right)\right]^{\delta_1}\ dx
		\\&
		\leqslant
		c\La^{\de-\de_1}\int_{S_{k}(\La^\de,r_2)}\left[H\left(x,|\na u|\right)\right]^{\de_1}\ dx
		\\&\quad+ c\int_{T(\La^\de,r_2)}\left[H\left(x,|F|\right)\right]^{\de}\ dx.
	\end{split}
\end{align}
In what follows, we denote 
\begin{align}
	\label{pfmain:28_1}
	\Lambda_{1}^{\delta}:= \left(\frac{40r}{r_2-r_1} \right)^{n}\La_0^\de,
\end{align}
where $\Lambda_{0}$ has been defined in \eqref{pfmain:9}.
Then we multiply \eqref{pfmain:28} by $\Lambda^{-\delta}$ and integrate the resulting inequality with respect to $\Lambda$ for $\Lambda\geqslant \Lambda_{1}$ to discover
\begin{align}
	\label{pfmain:29}
	\begin{split}
		J_{0}
		&:= \int_{\Lambda_{1}}^\infty\La^{-\de}\int_{S_{k}(\La^\de,r_1)}\left[H_{k}\left(x,|\na u|\right)\right]^{\de-\de_1}\left[H\left(x,|\na u|\right)\right]^{\de_1}\ dx\ d\La
		\\&
		\leqslant
		c_{*}\int_{\Lambda_{1}}^\infty\La^{-\de_{1}}\int_{S_{k}(\La^\de,r_2)}\left[H\left(x,|\na u|\right)\right]^{\de_1}\ dx\ d\La
		\\&
		\quad
		+c_{*}\int_{\Lambda_{1}}^\infty\La^{-\de}\int_{T(\La^\de,r_2)}\left[H\left(x,|F|\right)\right]^{\de}\ dx\ d\La
		=:c_{*}(J_{1}+J_{2})
	\end{split}
\end{align}
for some constant $c_{*}\equiv c_{*}(n,N,p,q,\alpha,\nu,L,[a]_{0,\alpha})$. Now we shall estimate the terms appearing in the last display. In turn, applying Fubini's theorem, we find 
\begin{align}
	\label{pfmain:30}
	\begin{split}
		J_{0} &= \int_{\Lambda_{1}}^\infty\La^{-\de}\int_{B_{r_1}(x_0)}\left[H_{k}\left(x,|\na u|\right)\right]^{\de-\de_1}\left[H\left(x,|\na u|\right)\right]^{\de_1}\rchi_{\left\{H_{k}\left(x,|\na u|\right)\geqslant \La\right\}}\ dx\ d\La
		\\&
		=
		\int_{S_{k}(\Lambda_{1}^{\delta},r_1)}\left[H_{k}\left(x,|\na u|\right)\right]^{\de-\de_1}\left[H\left(x,|\na u|\right)\right]^{\de_1}\int_{\Lambda_{1}}^{H_{k}\left(x,|\na u|\right)}\La^{-\de}\ d\La\ dx
		\\&
		=
		\frac{1}{1-\de}\int_{S_{k}(\Lambda_{1}^{\delta},r_1)}\left[H_{k}\left(x,|\na u|\right)\right]^{1-\de_1}\left[H\left(x,|\na u|\right)\right]^{\de_1}\ dx
		\\&\quad-\frac{\Lambda_{1}^{1-\delta}}{1-\de}\int_{S_{k}(\Lambda_{1}^{\delta},r_1)}\left[H_{k}\left(x,|\na u|\right)\right]^{\de-\de_1}\left[H\left(x,|\na u|\right)\right]^{\de_1}\ dx.
	\end{split}
\end{align}
Recalling that $\frac{1-\delta}{\delta}< \frac{1}{\delta_1}$ and using the definition of $\Lambda_{1}$ in \eqref{pfmain:28_1}, we see 
\begin{align*}
	\begin{split}
		&\Lambda_{1}^{1-\de}\int_{S_{k}(\Lambda_{1}^{\delta},r_1)}\left[H_{k}\left(x,|\na u|\right)\right]^{\de-\de_1}\left[H\left(x,|\na u|\right)\right]^{\de_1}\ dx
		\\
		&\leqslant \left(\frac{40r}{r_2-r_1} \right)^\frac{n}{\de_1}\La_0^{1-\de}\int_{B_{2r}(x_0)}\left[H\left(x,|\na u|\right)\right]^\de\ dx
		\\&
		\leqslant
		\left(\frac{40r}{r_2-r_1} \right)^\frac{n}{\de_1}\La_0|B_{2r}(x_0)|.
	\end{split}
\end{align*}
From the estimate of the last display in \eqref{pfmain:30}, we conclude 
\begin{align}
	\label{pfmain:32}
	\begin{split}
		J_{0} &\geqslant
		\frac{1}{1-\de}\int_{S_{k}(\Lambda_{1}^{\delta},r_1)}\left[H_{k}\left(x,|\na u|\right)\right]^{1-\de_1}\left[H\left(x,|\na u|\right)\right]^{\de_1}\ dx
		\\&
		\quad-\frac{1}{1-\de}\left(\frac{40r}{r_2-r_1} \right)^\frac{n}{\de_1}\La_0|B_{2r}|.
	\end{split}
\end{align}
Again applying Fubini's theorem, we see 
\begin{align*}
	\begin{split}
		J_{1} &= \int_{S_{k}(\Lambda_{1}^{\delta},r_{2})}\left[H\left(x,|\na u|\right)\right]^{\de_1}\int_{\Lambda_{1}}^{H_{k}\left(x,|\na u|\right)}\La^{-\de_1}\ d\La\ dx
		\\&
		\leqslant
		\frac{1}{1-\delta_1}\int_{S_{k}(\Lambda_{1}^{\delta},r_{2})}\left[H_{k}\left(x,|\na u|\right)\right]^{1-\delta_1}\left[H\left(x,|\na u|\right)\right]^{\delta_1}\,dx
		\\&\leqslant
		\frac{1}{1-\delta_1}\int_{B_{r_2}(x_0)}\left[H_{k}\left(x,|\na u|\right)\right]^{1-\delta_1}\left[H\left(x,|\na u|\right)\right]^{\delta_1}\,dx
	\end{split}
\end{align*}
and 
\begin{align}
	\label{pfmain:34}
	\begin{split}
		J_{2} \leqslant \frac{1}{1-\delta}\int_{T(\Lambda_{1}^{\delta},r_2)}\left[H\left(x,|F|\right)\right]\,dx \leqslant
		\frac{1}{1-\delta}\int_{B_{r_2}(x_0)}\left[H\left(x,|F|\right)\right]\,dx.
	\end{split}
\end{align}
Combining the resulting estimates of \eqref{pfmain:32}-\eqref{pfmain:34} in \eqref{pfmain:29}, we have 

\begin{align}
	\label{pfmain:35}
	\begin{split}
		&\int_{S_{k}(\Lambda_{1}^{\delta},r_1)}\left[H_{k}\left(x,|\na u|\right)\right]^{1-\de_1}\left[H\left(x,|\na u|\right)\right]^{\de_1}\ dx
		\\&\leqslant  \frac{c(1-\de)}{1-\de_1}\int_{B_{r_2}(x_0)}\left[H_{k}\left(x,|\na u|\right)\right]^{1-\de_1}\left[H\left(x,|\na u|\right)\right]^{\de_1}\ dx
		\\&
		\quad
		+c\int_{B_{r_2}(x_0)}\left[H\left(x,|F|\right)\right]\ dx+\left(\frac{40r}{r_2-r_1} \right)^\frac{n}{\de_1}\La_0|B_{2r}|
	\end{split}
\end{align}
for some constant $c\equiv c(n,N,p,q,\alpha,\nu,L,[a]_{0,\alpha})$. At this moment, recalling the definition of $\Lambda_{1}$ in \eqref{pfmain:28_1} again, we observe
\begin{align*}
	\begin{split}
		&\int_{B_{r_1}(x_0)\setminus S_{k}(\Lambda_{1}^{\delta},r_1)}\left[H_{k}\left(x,|\na u|\right)\right]^{1-\delta_{1}}\left[H\left(x,|\na u|\right)\right]^{\de_1}\ dx
		\\&\leqslant
		\left[\left(\frac{40r}{r_2-r_1} \right)^{\frac{n}{\delta}}\La_0\right]^{1-\de}\int_{S_{k}(\Lambda_{1}^{\delta},r_1)}\left[H_{k}\left(x,|\na u|\right)\right]^{\de-\de_1}\left[H\left(x,|\na u|\right)\right]^{\de_1}\ dx
		\\&
		\leqslant
		\left(\frac{40r}{r_2-r_1} \right)^{\frac{n}{\delta_{1}}}
		\La_0^{1-\de}\int_{B_{2r}(x_0)}\left[H_{k}\left(x,|\na u|\right)\right]^{\de-\de_1}\left[H\left(x,|\na u|\right)\right]^{\de_1}\ dx
		\\&
		\leqslant
		\left(\frac{40r}{r_2-r_1} \right)^{\frac{n}{\delta_{1}}}\La_0|B_{2r}|.
	\end{split}
\end{align*}
Using the resulting inequality of the last display in \eqref{pfmain:35} and recalling that $\delta_0 < \delta$, we have 
\begin{align*}
	\begin{split}
		&\int_{B_{r_1}(x_0)}\left[H_{k}\left(x,|\na u|\right)\right]^{1-\de_1}\left[H\left(x,|\na u|\right)\right]^{\de_1}\ dx
		\\&
		\leqslant  
		\frac{c_{0}(1-\de_0)}{1-\de_1}\int_{B_{r_2}(x_0)}\left[H_{k}\left(x,|\na u|\right)\right]^{1-\de_1}\left[H\left(x,|\na u|\right)\right]^{\de_1}\ dx
		\\&
		\quad
		+c_{0}\int_{B_{r_2}(x_0)}H\left(x,|F|\right)\ dx+c_{0}\left(\frac{40r}{r_2-r_1}\right)^\frac{n}{\de_1}\La_0|B_{2r}|
	\end{split}
\end{align*}
for some constant $c_{0}\equiv c_{0}(n,N,p,q,\alpha,\nu,L,[a]_{0,\alpha})$. Finally, we select $\de_0\in(1-1/q,1)$ so that
\begin{equation*}
	1-1/q<\delta_{1}\leqslant \delta_{0}<1
	\quad\text{and}\quad0<\frac{c_{0}(1-\de_0)}{1-\de_1}\leqslant \frac{1}{2}.
\end{equation*}
In turn, we have 
\begin{align*}
	\begin{split}
		&\int_{B_{r_1}(x_0)}\left[H_{k}\left(x,|\na u|\right)\right]^{1-\de_1}\left[H\left(x,|\na u|\right)\right]^{\de_1}\ dx
		\\&\leqslant  
		\frac{1}{2}\int_{B_{r_2}(x_0)}\left[H_{k}\left(x,|\na u|\right)\right]^{1-\de_1}\left[H\left(x,|\na u|\right)\right]^{\de_1}\ dx
		\\&
		\quad
		+c_{0}\int_{B_{2r}(x_0)}H\left(x,|F|\right)\ dx+c_{0}\left(\frac{40r}{r_2-r_1}\right)^\frac{n}{\de_1}\La_0|B_{2r}|.
	\end{split}
\end{align*}
Once we arrive at this stage, we apply Lemma \ref{iter_lemma} to a bounded function $h : [r,2r]\rightarrow [0,\infty)$ given by
\begin{align*}
	h(t):= \int_{B_{t}(x_0)}\left[H_{k}\left(x,|\na u|\right)\right]^{1-\delta_1}\left[H\left(x,|\na u|\right)\right]^{\delta_1}\,dx
\end{align*}
with the exponents $\gamma_1\equiv 0$ and $\gamma_{2} \equiv \frac{n}{\delta_1}$, to discover
\begin{equation*}
	\int_{B_r(x_0)}\left[H_{k}\left(x,|\na u|\right)\right]^{1-\de_1}\left[H\left(x,|\na u|\right)\right]^{\de_1}\ dx \leqslant c\La_0|B_{2r}|+\int_{B_{2r}(x_0)} H\left(x,|F|\right)\ dx
\end{equation*}
for some constant $c\equiv c(n,N,p,q,\alpha,\nu,L,[a]_{0,\alpha})$. Finally, letting $k\rightarrow \infty$ in the last display and recalling the definition of $\Lambda_{0}$ in \eqref{pfmain:9}, we conclude that
\begin{align*}
	\begin{split}
		\fint_{B_{r}(x_0)}H\left(x,|\na u|\right)\ dx
		&\leqslant
		c\left(\fint_{B_{2r}(x_0)}\left[H\left(x,|\na u|\right)\right]^\de\ dx\right)^\frac{1}{\de}
		+c\left(\fint_{B_{2r}(x_0)}H\left(x,|F|\right)\ dx + 1\right)
	\end{split}
\end{align*}
for some constant $c\equiv c(n,N,p,q,\alpha,\nu,L,[a]_{0,\alpha})$. This completes the proof of Theorem \ref{main}.

\end{document}